\newcommand{\Xd}{X}%notation
\newcommand{\Li}{\mathcal{L}}
\newcommand{\LL}{\lvert \Li \rvert}
\newcommand{\LK}{\lvert K_{\Xd}\otimes\Li \rvert}
\newcommand{\D}{\mathcal{D}}
\newcommand{\OO}{\mathcal{O}}
\newcommand{\C}{\mathbb{C}}
\newcommand{\R}{\mathbb{R}}
\newcommand{\Z}{\mathbb{Z}}
\newcommand{\N}{\mathbb{N}}
\newcommand{\G}{\mathcal{G}}
\newcommand{\MCG}{MCG}
\newcommand{\LS}{\mathcal{S}}
\newcommand{\CC}{\mathcal{C}}
\newcommand{\mmu}{[\mu]}
\newcommand{\cp}[1]{\C P^{#1}}
\newcommand{\I}{\mathcal{I}}
\newcommand{\da}{\Delta_a}%interior polygon
\newcommand{\ak}{\alpha_{\kappa}}
\newcommand{\akp}{\alpha_{\kappa}'}
\newcommand{\ok}{\omega_{\kappa}}
\newcommand{\okp}{\omega_{\kappa}'}
\newcommand{\PSec}{\vert \Li \vert}
\newcommand{\Sec}{H^0 (X,\Li)}
\newcommand{\sh}{simple Harnack }
\newcommand{\ph}{phase-tropical }
\newcommand{\Hd}{\mathcal{H}_\Delta}
\newcommand{\A}{\mathcal{A}}
\newcommand{\ttor}{(\C^\ast)^2}
\DeclareMathOperator{\Log}{Log}
\DeclareMathOperator{\im}{im}
\DeclareMathOperator{\id}{id}
\DeclareMathOperator{\spaut}{Sp}
\DeclareMathOperator{\di}{d}
\DeclareMathOperator{\itr}{int}
\newtheorem{Proposition}{Proposition}[section]
\newtheorem{Definition}[Proposition]{Definition}
\newtheorem{Lemma}[Proposition]{Lemma}
\newtheorem{Remark}[Proposition]{Remark}
\newtheorem{Theorem}{Theorem}
\newtheorem{Conjecture}{Conjecture}
\newtheorem{Corollary}[Proposition]{Corollary}
\newtheorem{Question}{Question}
\newtheorem{Notation}{Notation}
\newtheorem*{ack}{Acknowledgement}
\begin{document}

\title{The vanishing cycles of curves in toric surfaces I}
\author{R\'{e}mi Cr\'{e}tois and Lionel Lang}

\maketitle

\begin{abstract}
 This article is the first in a series of two in which we study the vanishing cycles of curves in toric surfaces. We give a list of possible obstructions to contract vanishing cycles within a given complete linear system. Using tropical means, we show that any non-separating simple closed curve is a vanishing cycle whenever none of the listed obstructions appears. 
\end{abstract}

\footnote{{\bf Keywords:} Tropical geometry,  toric varieties and Newton polygons,  vanishing cycles and monodromy, simple Harnack curves.} 
\footnote{{\bf MSC-2010 Classification:} 14T05, 14M25, 32S30}

\section{Introduction}

Along a degeneration of a smooth algebraic curve $C$ to a nodal curve, there is a simple closed curve in $C$ that gets contracted to the nodal point. This simple closed curve, well defined up to isotopy, is called the vanishing cycle of the degeneration.

If the question of contracting simple closed curves abstractly in $\overline{\mathcal{M}}_g$ leaves no mystery, the embedded case is still open. In particular, we may ask what are the possible vanishing cycles of a curve $C$ embedded in a toric surface $\Xd$, where the underlying degenerations are constrained in a fixed complete linear system $\LL$ on $\Xd$.  As simply as it is stated, this question, suggested by Donaldson (\cite{Do}), admits no definitive answer. Another approach is to study the monodromy representation of the fundamental group of the complement of the discriminant $\D \subset \LL$ inside the mapping class group $\MCG(C)$. The study of such fundamental groups was suggested by Dolgachev and Libgober (\cite{DL}) and is still an active field of investigation. From a wider perspective, the description of the monodromy map gives an insight on the universal map $\LL \setminus \mathcal{D} \rightarrow \mathcal{M}_g$ which is far from being understood (see $e.g$ \cite{CV}).

We know from \cite{Wajn} that for a smooth plane curve $C$ of genus $g$, we can contract $2g$ simple closed curves whose complement in $C$ is a disc. It leads in particular to the topological classification of smooth projective surfaces in $\cp{3}$ given in \cite{MM}.
In \cite{Beau}, Beauville determines the image of the algebraic monodromy map for hypersurfaces of degree $d$ in $\cp{m}$, for any $m$ and $d$.

In the present paper, we focus on the case of curves but investigate any complete linear system $\LL$ on any smooth toric surface $\Xd$. We point out the obstructions to contract cycles in the curve $C$. They mainly arise from  the roots of the adjoint line bundle $K_\Xd \otimes \Li$. Square roots of $K_\Xd \otimes \Li$ were already considered in \cite{Beau}. The obstructions provided by higher order roots of $K_\Xd \otimes \Li$ were studied in \cite{earlesipe} and \cite{sipe} but were somehow forgotten in the common literature until recently. In particular, they do not show up in the work of \cite{Beau} as the obstructions they provide are undetectable at the homological level.
It might also happen that all the curves in $\LL$ are hyperelliptic. In such case, the monodromy map has to preserve the hyperelliptic involution.

We show that there are no other obstructions than the ones mentioned above.

\begin{Theorem}
Let $\Xd$ be a smooth complete toric surface, $\Li$ an ample line bundle on $\Xd$ and $C \in \LL$ a generic smooth curve. Then, the monodromy map 
\[ \mu : \pi_1(\LL \setminus \D) \rightarrow \MCG(C)\]
is surjective if and only if $C$ is not hyperelliptic and if the adjoint line bundle of $\Li$ admits no root.
In this case, any non-separating simple closed curve in $C$ is a vanishing cycle.
\end{Theorem}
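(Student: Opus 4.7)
The plan splits along the two implications of the biconditional.

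\textbf{Obstructions (only if).} If $C$ is hyperelliptic, the hyperelliptic involution $\inv$ on $C$ is canonical, hence preserved by every topological trivialization of the universal family over $\LL\setminus\D$. Consequently $\mu$ factors through the centralizer of $\inv$ in $\MCG(C)$, a proper subgroup in the relevant genus range. If $K_\Xd\otimes\Li$ carries an $r$-th root $\mathcal{M}$ with $r\geq 2$, then adjunction yields $(K_\Xd\otimes\Li)|_C = K_C$, so $\mathcal{M}|_C$ is an $r$-th root of $K_C$; this defines a Sipe-type $\Z/r\Z$-structure on $C$ which is monodromy-invariant by construction and cuts out a proper subgroup of $\MCG(C)$. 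Either phenomenon would prevent surjectivity.

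\textbf{Closure properties of vanishing cycles.} Let $V$ be the set of isotopy classes of non-separating simple closed curves on $C$ that arise as vanishing cycles of nodal degenerations in $\LL$. By the Picard--Lefschetz formula and the fact that vanishing cycles are permuted by monodromy, the subgroup $\langle\tau_v : v\in V\rangle$ generated by Dehn twists about elements of $V$ is contained in $\mu(\pi_1(\LL\setminus\D))$. It therefore suffices to prove that, under the hypotheses, $V$ contains a generating system for $\MCG(C)$ and a representative of every $\MCG(C)$-orbit of non-separating simple closed curves.

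\textbf{Tropical production of cycles.} Here I would exploit \sh curves and their tropical limits, as flagged in the abstract. Near a \ph limit with amoeba combinatorially dual to a unimodular triangulation of the Newton polygon $\Delta$ of $\Li$, a Harnack family degenerates along a bounded edge of the amoeba to a nodal Harnack curve, contracting an explicit loop on $C$ readable off the tropical diagram. Varying the triangulation and the contracted edge across the combinatorics of $\Delta$ yields a web of vanishing cycles realized on the \ph model $\Hd$, which can be transported to $C$ through a nearby-fiber diffeomorphism.

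\textbf{Main obstacle.} The crux is to show that this tropically-produced subset of $V$ generates $\MCG(C)$ exactly when no root and no hyperelliptic involution intervene. The hypotheses are sharp in a transparent way: an $r$-th root of $K_\Xd\otimes\Li$ should induce a $\Z/r\Z$-coloring of the interior lattice points of $\Delta$ that every tropical vanishing cycle must respect, and an analogous combinatorial witness should detect hyperellipticity. My plan is to argue contrapositively through these invariants: assuming no such coloring exists, extract from the tropical web a chain of simple closed curves of Humphries type whose Dehn twists generate $\MCG(C)$. The final assertion of the theorem then follows by transitivity of the $\MCG(C)$-action on isotopy classes of non-separating simple closed curves, combined with the conjugation-invariance of $V$ under $\mu(\pi_1(\LL\setminus\D))$.
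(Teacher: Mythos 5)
Your outline captures the correct high-level strategy and gets the obstruction direction essentially right (the hyperelliptic involution being monodromy-equivariant, and Earle--Sipe invariance of $r$-th roots of $K_C$ under $\im(\mu)$, which is exactly Proposition~\ref{root}). The plan for the converse — produce vanishing cycles from phase-tropical degenerations of Harnack curves, then assemble a Humphries generating chain — also matches the paper's architecture: Theorem~\ref{thm:acycle} gives the $A$-cycles from Harnack degenerations, Theorem~\ref{thm:rea} converts admissible weighted graphs in the Newton polygon into elements of $\im(\mu)$, and Lemma~\ref{lem:serpent} packages a \emph{snake} of primitive segments into a Humphries family.

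However, the part you flag as the ``main obstacle'' is precisely where your proposal stops being a proof. Two concrete gaps. First, the combinatorial mechanism you propose for the root obstruction --- a $\Z/r\Z$-coloring of interior lattice points respected by all tropical vanishing cycles --- is not the invariant that actually governs the construction. What does is Proposition~\ref{prop:rootandlength}: the largest root order $n$ of $K_\Xd\otimes\Li$ equals $\gcd\{l_\epsilon\}$ of the edge lengths of the adjoint polygon $\da$. The paper's production of generators is then controlled by this gcd via Proposition~\ref{prop:gcdedges} (which yields $\tau_\sigma^s\in\im(\mu)$ for bridges $\sigma$, with $s=\gcd\{l_\epsilon\}$), and when $n=1$ one gets all bridge twists on the nose and can run Proposition~\ref{prop:interior} to obtain twists along arbitrary interior primitive segments. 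Nothing in your sketch explains how to bootstrap from the twists around $A$-cycles and a few boundary-adjacent segments to twists along segments deep inside $\Delta$; this is exactly the content of the $\G_{\kappa,\kappa',v}^{m_1,m_2}$ constructions, Lemmas~\ref{lem:chasing},~\ref{lem:propagate},~\ref{lem:diamonds}, and the convex-subdivision extension/refinement Lemmas~\ref{lem:extend},~\ref{lem:refinement}. Second, your proposal never disentangles the case $d=\dim(\im\phi_{K_\Xd\otimes\Li})=1$ from the rest: this is the genuine hyperelliptic case, and the paper explicitly defers it to the companion paper. Without that case analysis, the ``only if'' direction is incomplete and the ``if'' direction is only claimed, not proved.
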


Let us mention a very recent independent work by Salter (\cite{Sal}) in which he proves in particular the result above in the case of $\cp{2}$.

As suggested by \cite{Beau}, some of the latter obstructions persist in homology. In the statement below, $\spaut(H_1(C,\Z))$ denotes the group of automorphisms preserving the intersection form.

\begin{Theorem}
Let $\Xd$ be a smooth complete toric surface, $\Li$ an ample line bundle on $\Xd$ and $C \in \LL$ a generic smooth curve. Then, the algebraic monodromy map 
\[ \mmu : \pi_1(\LL \setminus \D) \rightarrow \spaut(H_1(C,\Z))\]
is surjective if and only if $C$ is not hyperelliptic and if the adjoint line bundle of $\Li$ admits no root of even order.
In this case, any non-separating simple closed curve in $C$ is homologous to a vanishing cycle.
\end{Theorem}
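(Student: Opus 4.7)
The plan is to deduce Theorem~2 from Theorem~1 by tracking, case by case, how the obstructions listed in Theorem~1 behave under the natural projection $[\,\cdot\,]:\MCG(C)\to\spaut(H_1(C,\Z))$. Since $\mmu=[\,\cdot\,]\circ\mu$, the image of $\mmu$ equals the $\spaut$-image of $\im(\mu)$, and by the refined form of Theorem~1 the latter is the common stabilizer in $\MCG(C)$ of the hyperelliptic involution (when $C$ is hyperelliptic) and of all roots of $K_\Xd\otimes\Li$. In particular, when neither obstruction is present, $\mu$ is already surjective and so is $\mmu$.

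For the direction $(\Rightarrow)$, I would exhibit a homological obstruction in each forbidden case. If $K_\Xd\otimes\Li$ admits a root of even order $2k$, then its $k$-th power is a square root $\theta$ of $K_\Xd\otimes\Li$; its restriction to $C$ is a theta characteristic, to which the classical Atiyah construction attaches a $\Z/2$-valued quadratic refinement $q_\theta$ of the intersection pairing on $H_1(C,\F_2)$ that is preserved by the monodromy. Consequently the reduction of $\mmu$ modulo~$2$ factors through $O(q_\theta)\subsetneq\spaut(H_1(C,\F_2))$, so $\mmu$ itself is not surjective. When $C$ is hyperelliptic, the $2g+2$ Weierstrass points project via Abel--Jacobi to a canonical configuration of half-periods in $J(C)[2]\subset H_1(C,\F_2)$ preserved by the monodromy, whose stabilizer in $\spaut(H_1(C,\Z))$ is the classical hyperelliptic level-$2$ subgroup, which is proper.

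For the remaining case of $(\Leftarrow)$, assume $C$ is non-hyperelliptic and $K_\Xd\otimes\Li$ admits only odd-order roots. For each odd $r$ that occurs, the set of $r$-th roots of $K_C=(K_\Xd\otimes\Li)|_C$ is a torsor over $H_1(C,\Z/r)$, and the $\MCG(C)$-action on it factors as a map into the affine group $H_1(C,\Z/r)\rtimes\spaut(H_1(C,\Z))$; restricted to the Torelli subgroup $I_g$ this is a translation cocycle $\tau_r:I_g\to H_1(C,\Z/r)$. The crucial claim is that $\tau_r$ is surjective for $r$ odd. Granting this, any $\sigma\in\spaut$ lifts to some $\tilde\sigma\in\MCG(C)$, which we correct by $t\in I_g$ satisfying $\tau_r(t)=\eta_0-\tilde\sigma\cdot\eta_0$, where $\eta_0$ is the distinguished $r$-th root coming from $\Xd$; the resulting $t\cdot\tilde\sigma$ lies in the stabilizer of $\eta_0$. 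Handling the finitely many root orders simultaneously yields a lift in $\im(\mu)$, and combining with Theorem~1 gives the surjectivity of $\mmu$. The last assertion of the theorem then follows because $\spaut(H_1(C,\Z))$ acts transitively on primitive classes of $H_1(C,\Z)$ and every non-separating simple closed curve is primitive.

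The main difficulty is the surjectivity of $\tau_r$ for $r$ odd. I would extract it from Johnson's identification of the abelianization of $I_g$ with $\wedge^3 H_1(C,\Z)/H_1(C,\Z)$ (modulo $2$-torsion) via a suitable $\spaut$-equivariant contraction, exploiting the fact that the Arf-type parity obstructions (\`a la Birman--Craggs) that appear for $r=2$ disappear in the odd case; an explicit check on bounding pair maps should close the argument.
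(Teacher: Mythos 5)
Your proposal takes a fundamentally different route from the paper, and the crucial $(\Leftarrow)$ step has a gap that I do not see how to close.

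The problem is in the case where $C$ is non-hyperelliptic but $K_{\Xd}\otimes\Li$ admits an odd-order root, so $\mu$ itself is \emph{not} surjective. Your plan is: lift $\sigma\in\spaut(H_1(C,\Z))$ to some $\tilde\sigma\in\MCG(C)$, then multiply by a Torelli element $t$ so that $t\tilde\sigma$ stabilizes the distinguished root $\eta_0$, and conclude that $\sigma\in\im(\mmu)$. But this conclusion requires $\MCG(C,\eta_0)\subseteq\im(\mu)$ (or at least that $\MCG(C,\eta_0)$ and $\im(\mu)$ have the same image in $\spaut$), and you justify it by invoking a ``refined form of Theorem~1'' identifying $\im(\mu)$ with the stabilizer. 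That identification is \emph{not} what Theorem~1 says: Theorem~1 is only a surjectivity criterion. The inclusion $\im(\mu)\subseteq\MCG(C,\eta_0)$ (Proposition~\ref{root}) goes the wrong way. The identification $\im(\mu)=\MCG(C,\LS|_{C_0})$ is precisely Conjecture~\ref{conj:general}, which the paper explicitly leaves open. So your argument is circular: it assumes the conjecture it would need to prove.

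The paper's proof of the ``if'' direction in the odd-root case avoids this entirely and is purely combinatorial/tropical. It fixes a ``snake'' $\sigma_1,\ldots,\sigma_{g_\Li},\sigma$ in $\Delta$ realizing Humphries' generating set on $C_0$. Proposition~\ref{prop:gcdedges} gives $\tau_\sigma^s\in\im(\mu)$ where $s=\gcd$ of the edge lengths of $\da$ (odd by Proposition~\ref{prop:rootandlength}). Separately, the chain relation applied to the lantern-like configuration $\delta_{\sigma_1}\cup\delta_{v_1}\cup\delta_{\sigma_2}$ gives $([\tau_{\sigma_1}][\tau_{v_1}][\tau_{\sigma_2}])^4=[\tau_\sigma]^2$, and the factors on the left are already known to lie in $\im(\mmu)$ by Propositions~\ref{prop:corner}, \ref{prop:side} and Theorem~\ref{thm:acycle}. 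Since $\gcd(s,2)=1$, one gets $[\tau_\sigma]\in\im(\mmu)$, then Proposition~\ref{prop:interior} and Lemma~\ref{lem:serpent} finish. This is a direct construction of generators of $\spaut(H_1(C_0,\Z))$ inside $\im(\mmu)$, with no appeal to any classification of $\im(\mu)$. I would also note that your $(\Rightarrow)$ direction (Atiyah/Arf quadratic form for even roots, Weierstrass points for the hyperelliptic case) is a reasonable sketch of the homological obstructions, but the paper itself defers those two cases to the follow-up paper \cite{article2}; what is proved in this paper is only the constructive ``if'' direction, which is exactly where your argument breaks.
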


\setcounter{Theorem}{0}

We introduce new techniques in order to determine the image of the monodromy map $\mu$. We first use the intensively studied simple Harnack curves (see \cite{Mikh},  \cite{KO}), for which all possible degenerations are known. From them, we then construct loops in $\LL \setminus \mathcal{D}$ with explicit monodromy in $\MCG(C)$ using tropical methods. To this aim, we consider partial phase-tropical compactifications of $\mathcal{M}_g$. According to \cite{L}, we have a proper understanding of the closure of $\LL \setminus \mathcal{D}$ in this compactification in terms of Fenchel-Nielsen coordinates. Using these coordinates, we construct loops with prescribed monodromy in the closure. We then push these loops back to $\LL \setminus \mathcal{D}$ with the help of Mikhalkin's approximation Theorem, see Theorem \ref{thm:rea}. The efficiency of such a technique suggest that the discriminant intersects nicely the boundary of a globally well defined \ph compactification of $\mathcal{M}_g$. It also has the advantage of being explicit in the sense that we can actually draw the cycles that we can contract on our reference curve $C$. 

In \cite{article2}, we will use this technique further to determine the image of the (algebraic) monodromy in the hyperelliptic and Spin cases. Motivated by these results and the constructions given in section \ref{sec:conj}, we give a conjecture about the image of $\mu$ in the general case (see Conjecture \ref{conj:general}). In \cite{Sal}, Salter proves this conjecture for degree $5$ curves in $\cp{2}$. Coming back to the broader question asked in \cite{Do}, we believe the tropical approach could be carried to the case of smooth surfaces in toric $3$-folds. Another interesting and fundamental variation of the problem is the study of contraction of cycles along degenerations of real curves.\\

This paper is organised as follows. In section \ref{sec:set}, we introduce the obstructions and the main results more precisely. In section \ref{sec:toric}, we recall some basic facts about toric surfaces and how to detect the obstructions on a Newton polygon corresponding to the linear system. In section \ref{sec:harnack}, we generalize some results of \cite{KO} in order to produce our first vanishing cycles (see Theorem \ref{thm:acycle}). Moreover, we provide a trivialization of the universal curve over the space of simple Harnack curves (see Proposition \ref{prop:triv}). It allows us to describe the vanishing cycles in a combinatorial fashion on the Newton polygon (see Corollary \ref{cor:triv}). In section \ref{sec:tropi}, we use a 1-parameter version of Mikhalkin's approximation Theorem (see Theorem \ref{thm:rea}) to produce elements of $\im(\mu)$ given as some particular weighted graphs in the Newton polygon. In section \ref{sec:combi}, we play with this combinatorics to prepare the proof of Theorems \ref{thm:main1} and \ref{thm:main2} which we give in section \ref{sec:dis2}. At the end of the paper, we motivate Conjecture \ref{conj:general} with some extra constructions.

\begin{ack} The authors are grateful to Denis Auroux,  Sylvain Courte, Simon K. Donaldson, Christian Haase, Tobias Ekholm, Ilia Itenberg, Grigory Mikhalkin, and all the participants of the learning seminar on Lefschetz fibrations of Uppsala. The two authors were supported by the Knut and Alice Wallenberg Foundation.
\end{ack}

\setcounter{tocdepth}{2}
\tableofcontents

\section{Setting and statements}\label{sec:set}

\subsection{Setting}\label{setting}

Let $\Xd$ be a smooth complete toric surface. Take an ample line bundle $\Li$ on $\Xd$ and denote by $\LL = \mathbb{P}(H^0(\Xd,\Li))$ the complete linear system associated to it.

\begin{Definition}
  The discriminant of the linear system $\LL$ is the subset $\D\subset \LL$ consisting of all the singular curves in $\LL$.
\end{Definition}

Notice that since $\Xd$ is toric, $\Li$ is in fact very ample (see \cite[Corollary 2.15]{Oda}) and the embedding of $\Xd$ in the dual $\LL^*$ of the linear system identifies $\D$ with the dual variety of $\Xd$. In particular, let us recall the following classical facts.

\begin{Proposition}[6.5.1 in \cite{DIK}]
If $\Li$ is an ample line bundle on a smooth complete toric surface $\Xd$, then the discriminant $\D$ is an irreducible subvariety of $\LL$ of codimension at least $1$. It is of codimension $1$ if and only if there exists an element of $\D$ whose only singularity is a nodal point. In this case, the smooth set of $\D$ is exactly the set of such curves.
\end{Proposition}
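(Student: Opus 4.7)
The plan is to establish irreducibility of $\D$, then the codimension bound, and finally the characterisation of codimension $1$ and of the smooth locus.

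For irreducibility, I would introduce the incidence variety
$$I = \{(C, p) \in \LL \times X : p \in C_{\mathrm{sing}}\}$$
and analyse the second projection $\pi_2 : I \to X$. Its fiber over $p$ is the projectivisation of the subspace of $H^0(X,\Li)$ of sections whose $2$-jet at $p$ vanishes. As $\Li$ is very ample, the $2$-jet evaluation map is generically surjective, so the fiber has constant dimension $\dim \LL - 3$ on a dense open subset of $X$ and $\pi_2$ is there a projective bundle. The irreducibility of $X$ then gives that of $I$, hence of $\D = \pi_1(I)$, and the equality $\dim I = \dim \LL - 1$ yields the codimension bound $\operatorname{codim}_\LL \D \geq 1$.

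The characterisation of codimension $1$ proceeds by comparing equisingular strata. The locus of curves with exactly one ordinary double point has codimension $1$ in $\LL$ when nonempty --- three local conditions for a node at a fixed point, minus the two dimensions of $X$ --- while every other equisingular stratum (cusps, tacnodes, higher Brieskorn--Pham singularities, or multiple singular points) has codimension at least $2$. Together with the irreducibility of $\D$, this yields the equivalence: $\operatorname{codim} \D = 1$ if and only if there exists a curve in $\LL$ with a unique nodal singularity, in which case the corresponding stratum is open dense in $\D$.

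The smooth locus assertion then follows from a local analysis. At a nodal curve $C_0$, analytic coordinates realising $C_0 = \{xy = 0\}$ near the node identify $\D$ locally with the pullback of the $A_1$ miniversal discriminant $\{t = 0\}$ along a submersion, which is smooth of codimension $1$. At a curve $C_0$ with a worse singularity, the analogous pullback of a higher-type miniversal discriminant is singular at the origin, so $C_0$ lies in $\operatorname{Sing}(\D)$. The main obstacle is producing the local submersion from a neighbourhood of $C_0$ in $\LL$ onto the miniversal base of the germ: one must exhibit sections of $\Li$ realising all deformation parameters of the germ independently. This reduces to a jet-spanning property of $\Li$ at the singular point, which for an ample line bundle on a smooth complete toric surface can be verified directly on the Newton polygon.
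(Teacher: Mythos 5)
The paper itself does not prove this proposition; it is cited from [DIK], right after the observation that the embedding $X\hookrightarrow\LL^*$ by the very ample $\Li$ identifies $\D$ with the dual variety $X^\vee$. Your incidence-variety argument for irreducibility and the codimension bound is correct (reading ``$2$-jet at $p$ vanishes'' as ``image in $\OO_{X,p}/\mathfrak{m}_p^2$ vanishes'', i.e.\ $f(p)=0$ and $df_p=0$, as your dimension count confirms), and the codimension-$1$ characterisation via strata is sound.

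The genuine gap is in the smooth-locus claim. You reduce it to exhibiting a local submersion from a neighbourhood of $C_0$ in $\LL$ onto the miniversal base of the singularity germ, and you assert that the required jet-spanning ``can be verified directly on the Newton polygon.'' This is the crux of the argument and you have not proved it, and in fact it is not available under ampleness alone: surjectivity of $H^0(X,\Li)\to\OO_{X,p}/(f,J_f)$ for an $A_k$ germ at $p$ needs roughly $(k-1)$-jet spanning of $\Li$ at $p$, whereas ampleness on a smooth toric surface only guarantees very ampleness, i.e.\ $1$-jet spanning. Curves in $\LL$ can carry $A_k$ singularities with $k$ large relative to $\Li$, or even non-isolated singularities (non-reduced curves), and then the miniversal-pullback picture breaks down. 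Your argument does establish the forward direction---uninodal curves are smooth points of $\D$, since there only the evaluation of a section at $p$ enters---but the converse needs a different mechanism. The paper's identification of $\D$ with $X^\vee$ supplies exactly that: in characteristic zero the biduality theorem $(X^\vee)^\vee = X$ implies that a tangent hyperplane is a smooth point of $X^\vee$ if and only if its contact locus is a single point at which the second fundamental form is nondegenerate, i.e.\ the hyperplane section is uninodal, with no jet-spanning hypothesis on $\Li$ beyond very ampleness.
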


From now on, we will assume that the discriminant $\D$ is a hypersurface in $\LL$. Let $C'\in \D$ be a curve whose only singularity is a node, which we denote by $p\in C'\subset \Xd$. Then, from \cite[Proposition 2.1]{ACG2} we deduce that there exist a neighborhood $U\subset \Xd$ of $p$, a neighborhood $B\subset \LL$ of $C'$ such that the restriction of the universal curve
\[
\{(x,C)\in U\times B\ |\ x\in C\} \rightarrow B
\]
is isomorphic to
\begin{equation}\label{local}
\{(z,w,t_1,\ldots,t_{\dim(\LL)})\in \C^2\times\C^{\dim(\LL)}\ |\ z^2 + w^2 = t_1\} \rightarrow \C^{\dim(\LL)}
\end{equation}
in a neighborhood of $(0,\ldots,0)$. The restriction of the previous map to the locus where $t_1\neq 0$ and $\frac{z^2}{t_1},\frac{w^2}{t_1} \geq 0$ is fibered in circles whose radius goes to $0$ as $t_1$ goes to $0$.

 Take now $C_0\in\LL$ a smooth curve and $\gamma : [0,1] \rightarrow \LL$ a smooth path such that $\gamma(0) = C_0$ and $\gamma^{-1}(\D) = \{1\}$, with $\gamma(1) = C'$. We can find $\epsilon>0$ such that $\gamma([1-\epsilon,1])$ is included in $B$. From Ehresmann theorem \cite[Theorem 9.3]{Voisin1}, the universal curve over $\LL\setminus \D$ is a smoothly locally trivial fibration. In particular, we can trivialize its pullback to $[0,1-\epsilon]$ via $\gamma$. Using this trivialization and the local form of the universal curve over $B$, the circles obtained in the local form \eqref{local} of the universal curve induce a smooth simple closed curve $\delta$ on $C_0$. The isotopy class of this curve does not depend on the choices made (see \cite[\S 10.9]{ACG2}).

\begin{Definition}
  We call the isotopy class of $\delta$ in $C_0$ a vanishing cycle of $C_0$ in the linear system $\LL$.
\end{Definition}

The present paper is motivated by a question of Donaldson (Question 1 in \cite{Do}), which we formulate as follows.

\begin{Question}\label{qvc}
Which isotopy classes of smooth simple closed curves on $C_0\in \LL$ are vanishing cycles in $\LL$?  
\end{Question}

Starting from a vanishing cycle $\delta$ on $C_0$, one can construct another one using the geometric monodromy map which is defined as follows. Taking a path $\gamma :[0,1]\rightarrow \LL\setminus \D$ from $C_0$ to itself, the pullback of the universal curve over $\gamma$ is trivialisable and a choice of trivialisation induces an orientation preserving diffeomorphism $\phi$ of $C_0$. In fact, the class of $\phi$ in the mapping class group $\MCG(C_0)$ of $C_0$ only depends on the homotopy class of $\gamma$. Thus, we obtain the geometric monodromy morphism
\[
\mu : \pi_1(\LL\setminus\D,C_0)\rightarrow \MCG(C_0).
\]

If $\delta$ is a vanishing cycle on $C_0$, then it follows from the definitions that for any element $\phi \in \im(\mu)$, $\phi(\delta)$ is also a vanishing cycle. Moreover, the irreducibility of $\D$ implies the following.

\begin{Lemma}[\cite{Voisin2}, Proposition 3.23]\label{irred}
If $\delta$ is a vanishing cycle of $C_0$ in $\LL$, then all the other vanishing cycles are obtained from $\delta$ under the action of the image of the geometric monodromy map. In particular, if $\delta$ and $\delta'$ are two vanishing cycles of $C_0$ in $\LL$, then the pairs $(C_0,\delta)$ and $(C_0,\delta')$ are homeomorphic.  
\end{Lemma}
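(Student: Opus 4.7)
My plan is to reduce the statement to the path-connectedness of the smooth locus $\D_{sm} \subset \D$ of the discriminant, combined with the Picard–Lefschetz local normal form \eqref{local}. The key observation is that, since $\D$ is irreducible (as stated in the Proposition quoted from \cite{DIK}), its smooth locus $\D_{sm}$ is also irreducible and in particular path-connected in the complex topology; recall that $\D_{sm}$ consists exactly of curves with a single node.

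First I would show how the local model \eqref{local} gives a natural ``parallel transport of vanishing cycles along $\D_{sm}$''. Pick $C' \in \D_{sm}$. In the neighborhood $B \subset \LL$ of $C'$ from \eqref{local}, consider the open set $B \setminus \D$; for each point $\tilde C \in B$ close to $C'$ the fiber has a well-defined ``vanishing circle'' coming from the $z^2+w^2 = t_1$ model, and this circle varies smoothly with $\tilde C$. As a consequence, a path $\sigma : [0,1] \to \D_{sm}$ can be pushed off $\D$ into a path $\tilde \sigma : [0,1] \to \LL \setminus \D$ whose associated symplectic parallel transport (in the Ehresmann trivialization of the universal curve over $\LL \setminus \D$) carries the vanishing cycle near $\sigma(0)$ to the vanishing cycle near $\sigma(1)$, up to isotopy.

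Now let $\delta_1$ and $\delta_2$ be two vanishing cycles of $C_0$. By definition each arises from a path $\gamma_i : [0,1] \to \LL$, starting at $C_0$, meeting $\D$ transversally and only at $\gamma_i(1) = C'_i \in \D_{sm}$. Shorten $\gamma_i$ to a path $\tilde\gamma_i : [0,1] \to \LL \setminus \D$ ending at a point $\tilde C'_i$ close to $C'_i$ but off $\D$, in such a way that parallel transport along $\tilde \gamma_i$ maps $\delta_i$ to the vanishing circle in the local model at $C'_i$. Choose a path $\sigma$ in $\D_{sm}$ from $C'_1$ to $C'_2$, and push it off $\D$ as above to obtain $\tilde \sigma : [0,1] \to \LL \setminus \D$ from $\tilde C'_1$ to $\tilde C'_2$. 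The concatenation
\[
\ell \;=\; \tilde\gamma_1 \cdot \tilde\sigma \cdot \tilde\gamma_2^{-1}
\]
is a loop in $\LL \setminus \D$ based at $C_0$, and by the previous paragraph $\mu(\ell)(\delta_1) = \delta_2$ in $\MCG(C_0)$. This proves the first sentence of the lemma.

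For the second sentence, observe that $\mu(\ell)$ is represented by an orientation-preserving self-diffeomorphism $\phi$ of $C_0$ with $\phi(\delta_1)$ isotopic to $\delta_2$. Composing $\phi$ with a small isotopy turning $\phi(\delta_1)$ into $\delta_2$ yields a self-homeomorphism of $C_0$ sending the pair $(C_0, \delta_1)$ to $(C_0, \delta_2)$. The only subtle point in this whole scheme is the continuous dependence of the local vanishing circle in \eqref{local} as one moves along $\sigma$ in $\D_{sm}$; this is the step I expect to be the main technical obstacle, and it is precisely where one uses that near any point of $\D_{sm}$ the universal curve admits a family of Picard–Lefschetz charts depending smoothly on the parameters $t_2, \ldots, t_{\dim \LL}$.
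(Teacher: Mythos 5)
The paper does not prove this lemma but cites Voisin (Prop.~3.23), noting only that it follows from the irreducibility of $\D$; your argument is a correct reconstruction of exactly that standard proof, using connectedness of $\D_{sm}$ together with the local model \eqref{local} to transport vanishing circles along a path $\sigma\subset\D_{sm}$ pushed off into $\LL\setminus\D$, and then conjugating via the loop $\tilde\gamma_1\cdot\tilde\sigma\cdot\tilde\gamma_2^{-1}$. The step you flag as the main obstacle (smooth dependence of the vanishing circle on $t_2,\ldots,t_{\dim\LL}$, patched along $\sigma$ by a Lebesgue-number argument over finitely many Picard--Lefschetz charts) is indeed what makes the sketch rigorous, and it goes through as you describe since the isotopy class of the local vanishing cycle is chart-independent, as the paper already notes when defining vanishing cycles.
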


As we will see later (see Theorem \ref{thm:acycle}), since we assume that $\LL$ is ample, as soon as the genus of $C_0$ is at least $1$ there always exists a non-separating vanishing cycle on $C_0$, $i.e.$ a vanishing cycle $\delta$ such that $C_0\setminus \delta$ is connected. It then follows from Lemma \ref{irred} that all the other vanishing cycles have the same property. Let us denote by $\I$ the set of isotopy classes of non-separating simple closed curves on $C_0$. Question \ref{qvc} now becomes: are all the elements of $\I$ vanishing cycles? 

By definition, if $\delta\in \I$ is a vanishing cycle, we can find a path $\gamma:[0,1]\rightarrow \LL$ starting at $C_0$ and ending on $\D$ defining $\delta$. Replacing the end of $\gamma$ by the boundary of a small disk transverse to $\D$ we obtain an element of $\pi_1(\LL\setminus\D,C_0)$. The computation of its image by $\mu$ gives the following.

\begin{Proposition}[\cite{ACG2}, \S 10.9]\label{dehnm}
If a simple closed curve $\delta\in \I$ is a vanishing cycle in $\LL$, then the Dehn twist $\tau_{\delta}$ along $\delta$ is in the image of $\mu$.  
\end{Proposition}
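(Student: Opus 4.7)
The plan is to realize the classical Picard-Lefschetz computation in the setup already described in the excerpt. Fix the path $\gamma:[0,1]\to \LL$ with $\gamma(0)=C_0$, $\gamma(1)=C'\in \D$ defining the vanishing cycle $\delta$, choose $\epsilon>0$ so that $\gamma([1-\epsilon,1]) \subset B$, and, after reparametrising, arrange that the image of $\gamma|_{[1-\epsilon,1]}$ in $\C^{\dim(\LL)}$ is the real segment $\{(s,0,\ldots,0) : s\in [0,\epsilon]\}$ traversed from $s=\epsilon$ to $s=0$. In particular, $C'$ corresponds to the origin and the local equation of $\D$ in $B$ is $t_1=0$.

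Define then the loop
\[
\tilde{\gamma} := \gamma|_{[0,1-\epsilon]} \cdot c \cdot \bigl(\gamma|_{[0,1-\epsilon]}\bigr)^{-1},
\]
where $c(\theta) := (\epsilon e^{2\pi i \theta}, 0, \ldots, 0)$ for $\theta \in [0,1]$. Then $\tilde{\gamma}$ lies in $\LL \setminus \D$ and is exactly the loop described in the statement: it replaces the final segment of $\gamma$ by the boundary of a small complex disc $\{|t_1| \leq \epsilon\}$ transverse to $\D$ at $C'$, pushed back to $C_0$ along $\gamma$. The goal is to show $\mu(\tilde{\gamma}) = \tau_\delta$.

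To that end, I would invoke Ehresmann's theorem to trivialise the universal curve over $\gamma([0,1-\epsilon])$, reducing the computation of $\mu(\tilde{\gamma})$ to the monodromy of the smooth family $\{C_\theta\}_{\theta\in [0,1]}$ along $c$, based at $C_\epsilon := \gamma(1-\epsilon)$. Since the neighbourhood $B$ is simply connected, the universal curve $\CC|_B$ can be trivialised outside a small neighbourhood $U'\subset U$ of the node $p$, identifying all fibres away from the node. Over $U$, the local form \eqref{local} identifies $\CC|_U$ with the family of smooth affine conics $\{z^2+w^2 = \epsilon e^{2\pi i \theta}\}$. Parametrising this family near the real vanishing circle $\Sigma := \{(z,w)\in \R^2 : z^2+w^2 = \epsilon\}$ by $(z,w) = \sqrt{\epsilon e^{2\pi i \theta}}(\cos\alpha, \sin\alpha)$ and extending to a tubular neighbourhood gives the classical Picard-Lefschetz model, whose monodromy is the right-handed Dehn twist $\tau_\Sigma$ supported in a collar of $\Sigma$. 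Under the trivialisation on $\gamma([0,1-\epsilon])$, the circle $\Sigma \subset C_\epsilon$ corresponds to the vanishing cycle $\delta \subset C_0$, and therefore $\mu(\tilde{\gamma}) = \tau_\delta$.

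The only genuine subtlety lies in the gluing step: one must check that the local Dehn twist produced on $U \cap C_\epsilon$ extends by the identity outside the collar of $\Sigma$ in a way compatible with the trivialisation of $\CC|_B$ outside $U$, so that the two pieces assemble into a well-defined self-diffeomorphism of $C_\epsilon$ in the class $\tau_\Sigma$. This is the standard local content of Picard-Lefschetz theory for an ordinary double point and is carried out in detail in \cite[\S 10.9]{ACG2}; beyond this bookkeeping the proof is a direct application of the local model \eqref{local}.
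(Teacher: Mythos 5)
Your proposal is correct and essentially coincides with what the paper intends: the paper does not prove this Proposition itself but cites \cite[\S 10.9]{ACG2}, merely sketching the construction of the loop by replacing the end of $\gamma$ with a small circle transverse to $\D$; your write-up fleshes out exactly this standard Picard--Lefschetz computation (local model \eqref{local}, monodromy of the conic family, gluing the local Dehn twist to the identity) and, appropriately, also defers the gluing bookkeeping back to the same reference.
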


 We do not know if the converse to Proposition \ref{dehnm} is true in general. However, Lemma \ref{irred} shows that if the geometric monodromy is surjective then all the elements of $\I$ are vanishing cycles. Conversely, if all the elements of $\I$ are vanishing cycles, then the geometric monodromy is surjective since $\MCG(C_0)$ is generated by the Dehn twists around the elements of $\I$. This suggests the following analogue to Question \ref{qvc} which we will study in the rest of the paper.

\begin{Question}\label{qmo}
  When is the geometric monodromy map $\mu$ surjective? More generally, describe the subgroup $\im(\mu)$.
\end{Question}

\subsection{Roots of the canonical bundle and monodromy}\label{roots}

In some cases, the smooth curves in $\LL$ will inherit some geometric structure from the ambient toric surface. As a partial answer to Question \ref{qmo}, we will see below how this structure might prevent the map $\mu$ to be surjective. To make this more precise, we introduce subgroups of the mapping class group which will come into play. 

Let $\pi : \CC\rightarrow B$ be a holomorphic family of compact Riemann surfaces of genus $g>1$ and $n>0$ be an integer which divides $2g-2$. An $n\textsuperscript{th}$ root of the relative canonical bundle $K_{rel}(\CC)$ of $\CC$ is a holomorphic line bundle $\LS$ on $\CC$ such that $\LS^{\otimes n}=K_{rel}(\CC)$. Denote by $R_n(\CC)$ the set of $n\textsuperscript{th}$ roots of $K_{rel}(\CC)$. If the base $B$ is a point, then $R_n(\CC)$ is finite of order $n^{2g}$. 

Choose a base point $b_0\in B$, and let $C_0 = \pi^{-1}(b_0)$. On the one hand, there is a restriction map from $R_n(\CC)$ to $R_n(C_0)$ and on the other, there is a monodromy morphism $\mu_{\CC} : \pi_1(B,b_0)\rightarrow \MCG(C_0)$ defined as in \S \ref{setting}. Recall from \cite[Corollary p.73]{sipe} that the group $\MCG(C_0)$ acts on $R_n(C_0)$. Moreover, we have the following result.

\begin{Proposition}[\cite{earlesipe}, Theorem 2]\label{earsip}
  If $\LS\in R_n(\CC)$, then all the elements in $\im(\mu_{\CC})$ preserve $\LS_{|C_0}\in R_n(C_0)$.
\end{Proposition}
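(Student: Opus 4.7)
The plan is to exhibit $R_n$ as a finite étale invariant of families and invoke the fact that a global section of an étale cover is preserved by monodromy. Since $R_n(C_b)$ is a torsor over $J(C_b)[n] \cong (\Z/n\Z)^{2g}$, which is finite, the fiberwise sets of $n$-th roots of $K_{rel}(\CC)$ assemble into a finite étale cover $\widetilde{B} \to B$ whose fiber over $b$ is $R_n(C_b)$. Giving an element $\LS \in R_n(\CC)$ is exactly the datum of a global section $s : B \to \widetilde{B}$.

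I would then fix such a section and a loop $\gamma : [0,1] \to B$ based at $b_0$. The monodromy of $\widetilde{B} \to B$ along $\gamma$ permutes the fiber $\widetilde{B}_{b_0} = R_n(C_0)$, and because $s$ is globally defined the unique lift of $\gamma$ starting at $s(b_0) = \LS|_{C_0}$ ends at $s(b_0)$ as well, so this covering-space monodromy fixes $\LS|_{C_0}$. It remains to identify this covering-space monodromy with the action of $\mu_{\CC}(\gamma)$ on $R_n(C_0)$. For this I would choose a smooth trivialization $\Phi : \gamma^*\CC \to [0,1]\times C_0$ exactly as in the definition of $\mu_{\CC}$, producing diffeomorphisms $\phi_t : C_0 \to C_{\gamma(t)}$ with $\phi_0 = \id$ and $\phi_1 = \mu_{\CC}(\gamma)$. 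One then verifies that $t \mapsto \phi_t^*(\LS|_{C_{\gamma(t)}})$ is the unique lift of $\gamma$ through $\widetilde{B}$ starting at $\LS|_{C_0}$; evaluating at $t = 1$ forces $\mu_{\CC}(\gamma)^*(\LS|_{C_0}) = \LS|_{C_0}$.

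The main obstacle is justifying that $t \mapsto \phi_t^*(\LS|_{C_{\gamma(t)}})$ really lands in a discrete set, despite the complex structure on the fibers varying continuously with $t$. This is precisely the content of the Earle--Sipe framework: the roots of the canonical bundle can be encoded by purely topological data (a refinement of $H^1(C_0,\Z/n)$ generalizing the combinatorial description of spin structures), the $\MCG(C_0)$-action is natural on this data, and this action matches the monodromy of the étale cover $\widetilde{B} \to B$. Granting this topological description, the path $t\mapsto \phi_t^*(\LS|_{C_{\gamma(t)}})$ lives in a finite set, is therefore constant, and the conclusion follows.
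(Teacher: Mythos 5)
The paper gives no proof of this proposition; it is quoted directly as Theorem~2 of Earle and Sipe. So there is nothing in the text to compare against, and the question is whether your sketch stands on its own.

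Your outline has the right shape: one wants to exhibit the fibrewise sets of $n$-th roots as a finite \'etale cover of $B$, observe that $\LS$ furnishes a global section, and then identify covering-space monodromy with the $\MCG(C_0)$-action through $\mu_{\CC}$. But as you yourself note, the whole weight of the argument rests on the last step. Two small points. First, the unique lift of $\gamma$ to $\widetilde{B}$ starting at $s(b_0)$ is simply $t\mapsto \LS_{|C_{\gamma(t)}}$, i.e.\ $s\circ\gamma$; this follows tautologically from $s$ being a section and already gives the triviality of the \emph{covering-space} monodromy on $\LS_{|C_0}$, with no further input. What is genuinely nontrivial is the identification of that monodromy with the action of $\mu_{\CC}(\gamma)$ on $R_n(C_0)$, and for this the relevant path is $t\mapsto\phi_t^*(\LS_{|C_{\gamma(t)}})$ viewed as a path \emph{in the fixed finite set} $R_n(C_0)$, which requires a way of pulling back $n$-th roots by mere diffeomorphisms. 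Second, that this path is well defined, continuous and valued in a discrete set is exactly the content of the topological encoding of $R_n$ established by Sipe and Earle--Sipe; you invoke it as a black box. Granting it, the path is constant and the conclusion follows, so your sketch is consistent with the cited proof --- but it is a reconstruction of Earle--Sipe's argument rather than an alternative, and it does not remove the dependence on that reference.
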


If $S\in R_n(C_0)$, denote by $\MCG(C_0,S)$ the subgroup of $\MCG(C_0)$ which preserves $S$. It is a subgroup of finite index which is proper in general (see \cite{sipe}).

Coming back to our problem, we obtain the following.

\begin{Proposition}\label{root}
Let $\LL$ be an ample complete linear system on $\Xd$ such that its generic element is a curve of genus at least $2$. Suppose that the adjoint line bundle $K_{\Xd} \otimes \Li$ admits a root of order $n\geq 2$, $i.e.$ a line bundle $\LS$ over $\Xd$ such that $\LS^{\otimes n} = K_{\Xd}\otimes \Li$. Then the restriction of $\LS$ to $C_0$ is a root of order $n$ of $K_{C_0}$ and the image of $\mu$ is a subgroup of $\MCG(C_0,\LS_{|C_0})$.
\end{Proposition}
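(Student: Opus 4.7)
The plan is to reduce the statement directly to Proposition \ref{earsip} applied to the universal family $\pi : \CC \to B := \LL \setminus \D$, with the input $n$-th root on the total space obtained by restriction from $\LS$ on $\Xd$.

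First, I would identify the relative canonical bundle of the universal curve. The universal curve $\CC \subset \Xd \times B$ is cut out by a section of the line bundle $p_\Xd^* \Li$ pulled back along the projection $p_\Xd : \Xd \times B \to \Xd$, so its conormal bundle in $\Xd \times B$ is $(p_\Xd^* \Li^{-1})|_\CC$. Adjunction then gives $K_\CC = \bigl(p_\Xd^*(K_\Xd \otimes \Li) \otimes p_B^* K_B\bigr)\big|_\CC$, from which
\[
K_{rel}(\CC) \;=\; K_\CC \otimes \pi^* K_B^{-1} \;=\; \bigl(p_\Xd^*(K_\Xd \otimes \Li)\bigr)\big|_\CC.
\]
Restricting this identity to the fiber $C_0 = \pi^{-1}(b_0) \subset \{b_0\} \times \Xd$ recovers the usual adjunction $K_{C_0} = (K_\Xd \otimes \Li)|_{C_0}$, and therefore $(\LS|_{C_0})^{\otimes n} = (\LS^{\otimes n})|_{C_0} = (K_\Xd \otimes \Li)|_{C_0} = K_{C_0}$, which is the first assertion.

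Next, I would upgrade this to an $n$-th root on the universal family: set $\widetilde{\LS} := (p_\Xd^* \LS)|_\CC$. By the identification above,
\[
\widetilde{\LS}^{\otimes n} \;=\; \bigl(p_\Xd^*(\LS^{\otimes n})\bigr)\big|_\CC \;=\; \bigl(p_\Xd^*(K_\Xd \otimes \Li)\bigr)\big|_\CC \;=\; K_{rel}(\CC),
\]
so $\widetilde{\LS} \in R_n(\CC)$. Since $B$ is connected, one also has $2g - 2 = \deg K_{C_0} = n \deg \LS|_{C_0}$, so $n$ divides $2g-2$ as required by the hypotheses of Proposition \ref{earsip}; and because $g \geq 2$ by assumption, that proposition applies verbatim.

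Finally, by Proposition \ref{earsip}, every element of $\im(\mu_\CC)$ preserves $\widetilde{\LS}_{|C_0} = \LS_{|C_0}$ as an element of $R_n(C_0)$. Since the monodromy $\mu$ of the linear system coincides with the monodromy $\mu_\CC$ of this universal family, we conclude $\im(\mu) \subset \MCG(C_0,\LS_{|C_0})$, proving the second assertion. The only mildly delicate point is the adjunction identification of $K_{rel}(\CC)$, which requires keeping track of the two factors of $\Xd \times B$; everything else is a direct application of the cited result.
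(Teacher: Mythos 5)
Your strategy---pulling $\LS$ back to the universal curve and citing Proposition \ref{earsip}---is the paper's strategy, but there is a genuine gap in the adjunction step, and the paper introduces an auxiliary hyperplane precisely to close it.

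The universal curve $\CC \subset \Xd \times B$, with $B = \LL\setminus\D$ and $\LL = \mathbb{P}(H^0(\Xd,\Li))$, is \emph{not} the zero locus of a section of $p_\Xd^*\Li$. The tautological construction gives a map $p_B^*\OO_{\LL}(-1) \hookrightarrow H^0(\Xd,\Li)\otimes\OO_{\Xd\times B} \to p_\Xd^*\Li$, i.e.\ a section of $p_\Xd^*\Li \otimes p_B^*\OO_{\LL}(1)$ whose zero locus is $\CC$. Consequently the normal bundle of $\CC$ in $\Xd\times B$ is $\bigl(p_\Xd^*\Li \otimes p_B^*\OO_{\LL}(1)\bigr)|_\CC$, and your adjunction computation actually yields
\[
K_{rel}(\CC) \;=\; \Bigl(p_\Xd^*(K_\Xd\otimes\Li)\otimes p_B^*\OO_{\LL}(1)\Bigr)\Big|_\CC,
\]
not $\bigl(p_\Xd^*(K_\Xd\otimes\Li)\bigr)|_\CC$. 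The extra factor $p_B^*\OO_{\LL}(1)$ is trivial on each fiber, so the restriction to $C_0$ is unaffected and the first assertion is fine; but it need not have an $n$-th root on $B$, so $\widetilde{\LS}=(p_\Xd^*\LS)|_\CC$ is not in general an element of $R_n(\CC)$, and Proposition \ref{earsip} cannot be applied as stated.

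This is exactly why the paper first removes a hyperplane $H \subset \LL$ not containing $C_0$ and works over $\LL\setminus(\D\cup H)$: on that affine complement $\OO_{\LL}(1)$ is trivial, a global section $s$ of $p_\Xd^*\Li$ cutting out $\CC$ does exist, and the exact sequence $0\to T_{rel}\CC\to ev^*T\Xd\to ev^*\Li\to 0$ gives $K_{rel}(\CC)=ev^*(K_\Xd\otimes\Li)$ with no spurious twist. The conclusion for $\mu$ over the full $\LL\setminus\D$ is then recovered because $\pi_1(\LL\setminus(\D\cup H),C_0)\to\pi_1(\LL\setminus\D,C_0)$ is surjective. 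To repair your argument you should insert this hyperplane-removal step (or otherwise trivialize $\OO_{\LL}(1)$ on the base); as written the claim ``$\widetilde{\LS}\in R_n(\CC)$'' does not follow.
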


\begin{proof}
The fact that the restriction of $\LS$ to $C_0$ is a root of order $n$ of $C_0$ comes from the adjunction formula (see \cite{GH})
\[
\left(K_{\Xd}\right)_{|C_0} = K_{C_0}\otimes \Li_{|C_0}^*.
\]
To prove the rest of the proposition, we show that this formula is true in family. 

To that end, choose a hyperplane $H$ in $\LL$ which does not contain $C_0$. Let 
\[
\CC = \{(x,C)\in \Xd\times\LL\setminus(\D\cup H)\ |\ x\in C\}
\]
be the universal curve with projection $\pi : \CC\rightarrow \LL\setminus(\D\cup H)$ and evaluation map $ev : \CC \rightarrow \Xd$. Then the pullback of $K_{\Xd}\otimes\Li$ to $\CC$ by $ev$ is isomorphic to the relative canonical bundle of $\CC$. Indeed, denote by $T_{rel}\CC$ the kernel of $\di \pi$. By definition, $K_{rel}(\CC)$ is the dual of $T_{rel}\CC$. Moreover, since all the curves in $\LL\setminus\D$ are smooth the differential of the evaluation map induces an injective morphism $T_{rel}\CC\rightarrow ev^*T\Xd$. On the other hand, since we removed $H$ from $\LL$, we can choose a section $s$ of the pullback of $\Li$ over $\Xd\times\LL\setminus(\D\cup H)$ which vanishes along $\CC$. Its derivative is well-defined along $\CC$ and induces a morphism $ev^*T\Xd\rightarrow ev^*\Li$ which is surjective since the curves in $\LL\setminus\D$ are transversally cut. It follows also that we have an exact sequence
\[
0\rightarrow T_{rel}\CC \rightarrow ev^*T\Xd\rightarrow ev^*\Li\rightarrow 0.
\]
Thus $ev^*K_{\Xd}\otimes ev^*\Li = K_{rel}(\CC)$. In particular, $ev^*\LS^{\otimes n} = K_{rel}(\CC)$, and $ev^*\LS$ is an element of $R_n(\CC)$. The result follows from Proposition \ref{earsip} and the fact that the morphism $\pi_1(\LL\setminus(\D\cup H),C_0)\rightarrow\pi_1(\LL\setminus\D,C_0)$ is surjective.
\end{proof}

Earle and Sipe showed in \cite[Corollary 5.3]{earlesipe} that for any Riemann surface of genus $g>2$, there exists an element of its mapping class group which does not preserve any $n\textsuperscript{th}$ root of its canonical bundle for all $n>2$ dividing $2g-2$. In particular, Proposition \ref{root} gives the following.

\begin{Corollary}\label{cor:obstruction}
  Let $\Xd$ be a smooth and complete toric surface and let $\Li$ be an ample line bundle on $\Xd$. Assume that the curves in $\LL$ are of  arithmetic genus at least $3$. Fix a smooth curve $C_0$ in $\LL$.

If $K_X\otimes \Li$ admits a $n\textsuperscript{th}$ root with $n>2$, then the geometric monodromy $\mu$ is not surjective. In particular, there exists a non-separating simple closed curve on $C_0$ which is not a vanishing cycle.\qed
\end{Corollary}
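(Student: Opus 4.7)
The plan is a short deduction from Proposition \ref{root} combined with the Earle--Sipe result recalled immediately before the statement. First, since the arithmetic genus $g$ of a curve in $\LL$ is at least $3$, the adjunction formula (already established in family inside the proof of Proposition \ref{root}) gives that $\LS_{|C_0}$ is an $n$-th root of $K_{C_0}$, so in particular $n$ divides $2g-2$. Proposition \ref{root} then yields
\[
\im(\mu) \subseteq \MCG(C_0, \LS_{|C_0}).
\]

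Next I would invoke Earle and Sipe's Corollary 5.3 from \cite{earlesipe}: since $g > 2$ and $n > 2$ with $n \mid 2g-2$, it produces a mapping class $\phi \in \MCG(C_0)$ that preserves no $n$-th root of $K_{C_0}$, and in particular does not preserve $\LS_{|C_0}$. Hence $\phi \notin \MCG(C_0, \LS_{|C_0})$, so $\phi \notin \im(\mu)$, proving that $\mu$ is not surjective.

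For the second conclusion I would argue by contradiction. Suppose every element of $\I$ were a vanishing cycle in $\LL$. By Proposition \ref{dehnm}, the Dehn twist $\tau_\delta$ along any such $\delta$ would then lie in $\im(\mu)$. Since $\MCG(C_0)$ is generated by Dehn twists along non-separating simple closed curves (Lickorish--Humphries), this would force $\im(\mu) = \MCG(C_0)$, contradicting the first part. Hence at least one non-separating simple closed curve on $C_0$ is not a vanishing cycle.

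There is no real technical obstacle here: the only substantive work has already been done in Proposition \ref{root} and in \cite{earlesipe}. The only care needed is to verify the hypotheses of Earle--Sipe, namely $g > 2$ and $n > 2$ with $n \mid 2g-2$, both of which follow at once from the hypotheses on the arithmetic genus and on the root order.
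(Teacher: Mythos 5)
Your argument is correct and is essentially the one the paper intends: the corollary is stated with a \qed because the deduction is exactly the combination of Proposition \ref{root} with Earle--Sipe's Corollary 5.3 as recalled in the sentence immediately preceding it, together with the observation (already made at the end of \S 2.1) that surjectivity of $\mu$ is equivalent to every non-separating simple closed curve being a vanishing cycle. Your verification that $n \mid 2g-2$ via adjunction and that $g>2$ follows from the genus hypothesis is the only care needed, and you handled it correctly.
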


\begin{Remark}
  Since the Picard group of $\Xd$ is free \cite[Proposition p.63]{Ful}, the $n\textsuperscript{th}$ root of $K_{\Xd} \otimes \Li$ is unique when it exists.
\end{Remark}

\subsection{Main results}

In the theorems below, $\Xd$ is a smooth and complete toric surface and $\Li$ is an ample line bundle on $\Xd$. We assume that the curves in $\LL$ are of  arithmetic genus at least $1$. This implies that the adjoint line bundle $K_{\Xd}\otimes \Li$ of $\Li$ has an empty base locus (see Proposition \ref{prop:poly} below). Finally, we denote by $d$ the dimension of the image of the map $\Xd \rightarrow \LK^*$ and we fix a smooth curve $C_0$ in $\LL$.

\newpage

\begin{Theorem}\label{thm:main1}
The monodromy map $\mu$ is surjective if and only if one of the following is satisfied:
\begin{enumerate}
\item $d=0$,
\item $d=2$ and $K_{\Xd}\otimes \Li$ admits no root of order greater or equal to $2$.
\end{enumerate}
\end{Theorem}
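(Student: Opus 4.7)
The plan is to split the theorem into its two implications and, in both, to pass through the geometric interpretation of $d$. By the adjunction formula applied in family (as in the proof of Proposition \ref{root}), the restriction of $K_{\Xd}\otimes\Li$ to a smooth $C_0\in\LL$ is $K_{C_0}$, so $d$ equals the dimension of the image of the canonical map of $C_0$. Thus $d=0$ corresponds to $g(C_0)=1$, $d=1$ to $C_0$ being hyperelliptic of genus at least $2$, and $d=2$ to $C_0$ being non-hyperelliptic. For surjectivity, my main reduction will be that $\MCG(C_0)$ is generated by Dehn twists about non-separating simple closed curves, so by Proposition \ref{dehnm} together with Lemma \ref{irred} it is enough to exhibit $\im(\mu)$ as acting transitively on $\I$.

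For the \emph{only if} direction, I would handle the excluded value $d=1$ and the excluded adjoint roots separately. If $d=1$, every smooth curve in $\LL$ is hyperelliptic and the hyperelliptic involution is canonically attached to $C_0$, so $\im(\mu)$ lies in the hyperelliptic mapping class group, a proper subgroup of $\MCG(C_0)$ when $g(C_0)\geq 2$. If $d=2$ and $K_{\Xd}\otimes\Li$ admits a root of order $n\geq 3$, Corollary \ref{cor:obstruction} already produces a non-separating simple closed curve that is not a vanishing cycle; for the remaining case $n=2$, Proposition \ref{root} confines $\im(\mu)$ to the stabiliser of the induced spin structure on $C_0$, which is again a proper subgroup of $\MCG(C_0)$ by the classical analysis of spin mapping class groups. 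In each situation $\mu$ fails to be surjective.

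For the \emph{if} direction, the case $d=0$ reduces to genus one, where $\MCG(C_0)=SL(2,\Z)$ is generated by two Dehn twists around curves meeting transversally in a single point; such a pair is easy to realise as vanishing cycles coming from two independent nodal degenerations of a generic elliptic curve in $\Xd$, which settles this case. The real content lies in the case $d=2$ with no root of $K_{\Xd}\otimes\Li$. There I would begin from the initial non-separating vanishing cycle produced by Theorem \ref{thm:acycle}, transport the discussion to a reference simple Harnack curve via the trivialisation of Proposition \ref{prop:triv} and the combinatorial description of Corollary \ref{cor:triv}, and then exploit the tropical loops constructed in Section \ref{sec:tropi} together with the one-parameter Mikhalkin approximation theorem (Theorem \ref{thm:rea}) to pull back many explicit Dehn twists into $\im(\mu)$, each indexed by a weighted graph inside the Newton polygon of $\Li$. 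The combinatorial statements of Section \ref{sec:combi} are then used to show that the collection of Dehn twists so produced already acts transitively on $\I$.

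The hard part is this final transitivity step in the $d=2$, root-free case. The challenge is to translate the arithmetic condition that $K_{\Xd}\otimes\Li$ has no root, phrased via the Newton polygon $\Delta$ by the results of Section \ref{sec:toric}, into the geometric statement that no proper subgroup of the form $\MCG(C_0,S)$ with $S\in R_n(C_0)$ can contain every tropical Dehn twist that has been produced. Equivalently, one must check that the lattice-combinatorial richness of the admissible weighted graphs in $\Delta$ matches the algebraic richness of the mapping class group action on roots of $K_{C_0}$, and this delicate balance is precisely what the root-free hypothesis is designed to encode.
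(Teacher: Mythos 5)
Your high-level framing is sound: splitting into the two implications, using the geometric interpretation of $d$ via the canonical map, and reducing surjectivity of $\mu$ to surjectivity onto a generating set of Dehn twists. Your sketch of the \emph{only if} direction also aligns with the paper's structure (Corollary \ref{cor:obstruction} for $n>2$; the $d=1$ and $n=2$ cases are in fact deferred by the paper to the companion article, so you are, if anything, ahead of what is proved here). The $d=0$ case is essentially correct: the paper realises the two generating twists concretely via Theorem \ref{thm:acycle} (an $A$-cycle twist) and Proposition \ref{prop:corner} (a bridge twist at the unique interior lattice point), rather than by appeal to ``independent nodal degenerations,'' but the idea is the same.

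The genuine gap is in the \emph{if} direction when $d=2$, and in particular in your final paragraph, which mischaracterises where the difficulty lies and proposes a strategy that would not close the argument. You propose to show ``that no proper subgroup of the form $\MCG(C_0,S)$ with $S\in R_n(C_0)$ can contain every tropical Dehn twist that has been produced,'' and suggest the root-free hypothesis ``is designed to encode'' this. But even if every produced twist avoided every root stabiliser, that would not imply they generate all of $\MCG(C_0)$: root stabilisers are far from an exhaustive list of proper subgroups. Moreover, this is not how the root-free hypothesis is actually used. It enters quantitatively, through Proposition \ref{prop:rootandlength} (the largest root order equals $\gcd$ of the edge lengths of $\da$) and Proposition \ref{prop:gcdedges} (which puts $\tau_\sigma^s\in\im(\mu)$ for every bridge $\sigma$, where $s$ is that $\gcd$). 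When $n=1$ this gives $\tau_\sigma\in\im(\mu)$ for all bridges on the nose; Proposition \ref{prop:interior} then upgrades this to all twists associated to primitive segments with at least one interior endpoint. The paper then concludes not by a transitivity argument on $\I$ but constructively, by exhibiting a \emph{snake} in $\Delta$ (Lemma \ref{lem:serpent}) whose associated twists realise Humphries' generating family of $\MCG(C_0)$. Your reduction to transitivity of $\im(\mu)$ on $\I$ is logically valid (via Theorem \ref{thm:acycle} and Lemma \ref{irred}), but you never execute it, and the route you sketch towards executing it would not succeed; you need the explicit generating-set argument instead.
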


The case $d=1$ corresponds to the hyperelliptic case and will be treated in the follow-up paper \cite{article2}.

If $n$ denotes the largest order of a root $\LS$ of $K_{\Xd}\otimes \Li$, there is a distinction between $n$ odd and $n$ even. When $n$ is even and $\LS$ is the $n\textsuperscript{th}$ root of $K_{\Xd}\otimes \Li$, the restriction of $\LS^{\otimes \frac{n}{2}}$ to $C_0$ is a $Spin$ structure on $C_0$. We study this case in another paper \cite{article2}. Let us simply say that when $n$ is even, the non-surjectivity of the monodromy already appears through the algebraic monodromy map

\[
\mmu : \pi_1(\LL\setminus \D,C_0)\rightarrow \spaut(H_1(C_0,\Z)),
\]
obtained by composing $\mu$ with the natural map $\MCG(C_0)\rightarrow \spaut(H_1(C_0,\Z))$, where $\spaut(H_1(C_0,\Z))$ is the group of automorphisms of $H_1(C_0,\Z)$ which preserve the intersection form.

When $n$ is odd, the obstruction described in Corollary \ref{cor:obstruction} is not detected by the algebraic monodromy.

\begin{Theorem}\label{thm:main2}
The algebraic monodromy map $\mmu$ is surjective if and only one of the following is satisfied:
\begin{enumerate}
\item $\mu$ is surjective,
\item $d=2$ and $K_{\Xd}\otimes \Li$ admits no root of order $2$.
\end{enumerate}
\end{Theorem}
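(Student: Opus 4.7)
The plan is to separate obstructions visible on $H_1(C_0,\Z)$ from those that are not. By Proposition \ref{root}, any root of $K_\Xd\otimes \Li$ restricts to a root of $K_{C_0}$ preserved by $\im(\mu)$; the key dichotomy is that preservation of a root of order $n$ is detectable in $\spaut(H_1(C_0,\Z))$ precisely when $n$ is even, through the Johnson--Atiyah correspondence between theta characteristics and quadratic refinements of the mod $2$ intersection form on $H_1(C_0,\F_2)$.

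\textbf{Sufficiency.} Condition (1) is immediate since $\mmu$ factors through $\mu$. For condition (2), if $K_\Xd\otimes \Li$ admits no root of order $\geq 2$, Theorem \ref{thm:main1} reduces the statement to (1). Otherwise the largest order $n$ of a root of $K_\Xd\otimes \Li$ is odd and $\geq 3$. I would extract from the tropical construction of sections \ref{sec:tropi}--\ref{sec:dis2} that $\im(\mu)$ equals the full stabilizer $\MCG(C_0,\LS_{|C_0})$ for this root $\LS$: the tropical loops produce Dehn twists around enough non-separating curves preserving $\LS$ to generate this stabilizer. It then suffices to check that for $n$ odd the image of $\MCG(C_0,\LS_{|C_0})$ in $\spaut(H_1(C_0,\Z))$ is all of $\spaut(H_1(C_0,\Z))$, which follows from the description of the affine action of $\MCG(C_0)$ on $R_n(C_0)$: for $n$ odd, every symplectic class admits a representative fixing any given root.

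\textbf{Necessity.} Suppose $d=2$ and $K_\Xd\otimes \Li$ admits a root $\LS$ of even order $n=2m$. Then $\LS_{|C_0}^{\otimes m}$ is a square root of $K_{C_0}$, i.e.\ a spin structure $\sigma$ on $C_0$, and by Proposition \ref{root} every element of $\im(\mu)$ preserves $\sigma$. Under the Johnson--Atiyah correspondence, $\sigma$ corresponds to a $\Z/2\Z$-valued quadratic form $q_\sigma$ refining the intersection form on $H_1(C_0,\F_2)$, and preservation of $\sigma$ descends at the homological level to preservation of $q_\sigma$. Hence $\im(\mmu)$ is contained in the preimage in $\spaut(H_1(C_0,\Z))$ of the orthogonal group $O(q_\sigma)\subsetneq \mathrm{Sp}(H_1(C_0,\F_2))$, a proper subgroup, so $\mmu$ cannot be surjective. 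The remaining cases $d=0$ (reducing to condition (1) via Theorem \ref{thm:main1}) and $d=1$ (hyperelliptic, deferred to \cite{article2}) fit into this dichotomy.

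\textbf{Main obstacle.} The delicate step is the refinement $\im(\mu)=\MCG(C_0,\LS_{|C_0})$ needed for sufficiency in condition (2): Proposition \ref{root} and Theorem \ref{thm:main1} together only place $\im(\mu)$ inside this stabilizer, and the equality is what carries the algebraic statement. Extracting it from the tropical construction of section \ref{sec:combi} will require showing that the vanishing cycles arising from tropical loops exhaust a generating set of $\MCG(C_0,\LS_{|C_0})$ by Dehn twists around non-separating curves preserving $\LS$. The companion statement that $\MCG(C_0,\LS_{|C_0})\twoheadrightarrow\spaut(H_1(C_0,\Z))$ for $n$ odd, though cleaner, will also need care to rule out any hidden translation obstruction in the $\MCG$-action on $R_n(C_0)$.
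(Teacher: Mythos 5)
The sufficiency argument for condition (2) has a genuine gap that you yourself flag but do not resolve. The step you describe — ``$\im(\mu)$ equals the full stabilizer $\MCG(C_0,\LS_{|C_0})$'' when $n$ is odd and $\geq 3$ — is precisely Conjecture~\ref{conj:general} of the paper, which is \emph{not} proved here (it is stated as a conjecture and motivated, not established). Theorem~\ref{thm:main1} gives a strictly weaker result: it characterizes when $\mu$ is surjective, and Proposition~\ref{root} only gives the inclusion $\im(\mu) \subset \MCG(C_0,\LS_{|C_0})$. Building the proof of Theorem~\ref{thm:main2} on the reverse inclusion is circular with respect to the paper's logical structure.

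The paper's actual route (\S\ref{sec:odd}) sidesteps the stabilizer entirely and works purely with explicit generators. Taking a snake $\sigma_1,\ldots,\sigma_{g_\Li},\sigma$ (Lemma~\ref{lem:serpent}), Proposition~\ref{prop:gcdedges} gives $\tau_\sigma^s \in \im(\mu)$ where $s = \gcd(l_\epsilon)$ is odd by Proposition~\ref{prop:rootandlength}. Independently, using the chain relation on a regular neighborhood of $\delta_{\sigma_1}\cup\delta_{v_1}\cup\delta_{\sigma_2}$ one obtains $([\tau_{\sigma_1}][\tau_{v_1}][\tau_{\sigma_2}])^4 = [\tau_\sigma]^2 \in \im(\mmu)$ directly from Propositions~\ref{prop:corner},~\ref{prop:side} and Theorem~\ref{thm:acycle}. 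Since $\gcd(s,2)=1$, combining $[\tau_\sigma]^s$ and $[\tau_\sigma]^2$ gives $[\tau_\sigma] \in \im(\mmu)$, and Proposition~\ref{prop:interior} then propagates this to all snake segments, so that Humphries' generators are hit. This is an elementary parity argument about powers of a single Dehn twist, requiring neither the exact determination of $\im(\mu)$ nor a structure theorem for $\MCG(C_0,\LS_{|C_0}) \to \spaut(H_1(C_0,\Z))$ when $n$ is odd (the latter being a nontrivial statement you would also need to prove). Your necessity direction via the Johnson--Atiyah correspondence for even $n$ is in the right spirit, though the paper defers the careful treatment of that obstruction to the companion paper.
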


In the general case, we conjecture the following.

\begin{Conjecture}\label{conj:general}
 Assume that $d=2$. Let $n$ be the largest order of a root $\LS$ of $K_{\Xd}\otimes \Li$. The image of $\mu$ is exactly $\MCG(C_0,\LS_{|C_0})$.
\end{Conjecture}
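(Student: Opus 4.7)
The conjecture asserts equality of two subgroups of $\MCG(C_0)$. One inclusion, $\im(\mu) \subseteq \MCG(C_0, \LS_{|C_0})$, is immediate from Proposition \ref{root} applied to the largest-order root $\LS$ of $K_{\Xd}\otimes\Li$, so the substance is the reverse inclusion: every mapping class preserving $\LS_{|C_0}$ must be realised by a loop in $\LL \setminus \D$.

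My plan is to adapt the strategy proving Theorem \ref{thm:main1}. In the no-root case, one shows that the Dehn twists obtained from the tropical constructions of Section \ref{sec:tropi} and from simple Harnack degenerations (Theorem \ref{thm:acycle}) already exhaust $\MCG(C_0)$. In the presence of $\LS$, these same constructions still produce Dehn twists, but Proposition \ref{root} forces each to lie in $\MCG(C_0, \LS_{|C_0})$; the remaining task is to verify that they exhaust the full stabiliser, not a proper subgroup.

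The key step is a combinatorial dictionary: given a non-separating simple closed curve $\delta$ on $C_0$, express the winding-number invariant of $\delta$ with respect to $\LS_{|C_0}$ (in the sense of \cite{sipe}) in terms of the weighted graph in the Newton polygon $\Delta$ producing $\delta$ via Corollary \ref{cor:triv}. The existence of the $n$-th root should equip $\Delta$ (together with the primitive subdivision used in Section \ref{sec:tropi}) with a natural $\Z/n$-valued combinatorial datum, and the winding number of $\delta$ should then be computable as a weighted sum of this datum along the tropical graph encoding $\delta$. Vanishing of this sum, i.e.\ the admissibility of $\delta$, should correspond to $\tau_{\delta} \in \MCG(C_0, \LS_{|C_0})$; conversely, every admissible $\delta$ should be realisable by such a graph via the combinatorial manipulations of Section \ref{sec:combi}. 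One would then invoke (or prove, in the required form) a generation statement paralleling the higher-spin results of \cite{Sal}: $\MCG(C_0, \LS_{|C_0})$ is generated by Dehn twists along admissible non-separating simple closed curves.

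The main obstacle lies in the odd-$n$ regime. For $n$ even, and in particular $n=2$, the invariant reduces to an Arf-type spin invariant already visible to the algebraic monodromy, and Theorem \ref{thm:main2} combined with the analysis announced in \cite{article2} should dispatch that case. For odd $n \geq 3$, however, Corollary \ref{cor:obstruction} produces an obstruction undetectable on homology, so one needs a genuinely non-homological combinatorial invariant attached to the Newton polygon. Building this invariant and proving its compatibility with the one-parameter approximation of Theorem \ref{thm:rea} is the heart of the problem: it amounts to a tropical refinement of the Sipe winding-number, and its construction will likely require a finer analysis of how the trivialisation of Proposition \ref{prop:triv} interacts with the $n$-th root structure on $\LS$.
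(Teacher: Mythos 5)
This statement is a \emph{conjecture}, not a theorem: the paper does not prove it, and there is therefore no ``paper's own proof'' to compare your attempt against. Section~\ref{sec:conj} only \emph{motivates} it by showing (via the graphs $\G_{\kappa,\kappa',v}^{m_1,m_2}(d)$, Lemma~\ref{lem:diamondsd}, Propositions~\ref{prop:interiord} and~\ref{prop:interiordd}, and Lemma~\ref{lem:diad}) that when $\da$ is divisible by $d$ one still gets $\tau_{\sigma}\in\im(\mu)$ for segments whose line meets $\da(d)$ and $\tau_{\sigma}^d\in\im(\mu)$ for all primitive integer segments; and the paper explicitly defers the case $n=2$ to~\cite{article2}. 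So your proposal should be judged as a research programme, and as such its general shape --- inclusion via Proposition~\ref{root}, a combinatorial dictionary expressing the Sipe winding number of a $\delta_\sigma$ in terms of the weighted graph, and an external generation theorem for $\MCG(C_0,\LS_{|C_0})$ --- is reasonable and broadly in the same spirit as Section~\ref{sec:conj}.

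However, two concrete gaps should be flagged. First, your treatment of the even case is incorrect as stated: for even $n>2$, the $n$-th root $\LS$ is \emph{not} the spin structure; only $\LS^{\otimes n/2}$ is, and the stabiliser $\MCG(C_0,\LS_{|C_0})$ is strictly smaller than the stabiliser of the associated spin structure. The obstruction coming from the higher-order part of $\LS$ is invisible homologically (exactly as in the odd case), so Theorem~\ref{thm:main2} together with the $n=2$ result of~\cite{article2} does \emph{not} ``dispatch'' even $n$ beyond $n=2$. Second, the generation statement you invoke --- that $\MCG(C_0,\LS_{|C_0})$ is generated by Dehn twists along winding-number-zero non-separating curves --- is not available in the generality required: the cited reference~\cite{Sal} establishes something of this flavour only for degree-$5$ plane curves, and extending such a generation theorem to arbitrary $n$ and arbitrary toric pairs is itself an open problem of the same order of difficulty as the conjecture. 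Without both the combinatorial dictionary between weighted graphs and winding numbers and a general generation theorem, the reverse inclusion remains entirely unproved; you have identified the right ingredients but supplied none of them.
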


We motivate this conjecture in the present paper (see \S \ref{sec:conj}) and in a follow-up paper (\cite{article2}).

We give the proof of the above theorems in the sections \ref{sec:dless2} and \ref{sec:dis2} where we treat separately the cases $d=0$ and $d=2$.

\begin{Remark}
  \begin{itemize}
  \item  The amplitude and genus requirements in the Theorems above guarantee that the discriminant $\D$ is of codimension $1$ in $\LL$ and that the vanishing cycles are non-separating (see Theorem \ref{thm:acycle}).
  \item  Question \ref{qvc} can also be stated in a symplectic setting. That is, if one allows the almost-complex structure on $\Xd$ to vary along a degeneration of $C_0$, can we obtain more vanishing cycles than in the algebraic case? Does the obstruction coming from the root of $K_{\Xd}\otimes \Li$ survive in this setting?
  \end{itemize}
\end{Remark}

To prove Theorems \ref{thm:main1} and \ref{thm:main2}  we use techniques from tropical geometry to construct explicit one-parameter families of curves in $\LL\setminus\D$ producing some simple elements of the mapping class group using Mikhalkin's realisability theorem (Theorem \ref{thm:rea}). In the non-$Spin$ case, we manage to construct a family of elements of $\MCG(C_0)$ whose image in $\spaut(H_1(C_0,\Z))$ coincides with the one of Humphries's generating family of the mapping class group (see \S\S \ref{sec:prime} and \ref{sec:odd}).

\section{Toric surfaces, line bundles and polygons}\label{sec:toric}

We recall some facts about toric surfaces (for more details, see e.g \cite{Ful} or \cite{GKZ}). Let $\Xd$ be a smooth complete toric surface associated to the fan $\Sigma\subset \Z^2\otimes\R$. Denote by $\Sigma(1)$ the set of $1$-dimensional cones in $\Sigma$. For each element $\epsilon\in\Sigma(1)$ let $u_\epsilon$ be the primitive integer vector in $\epsilon$ and $D_{\epsilon}\subset\Xd$ the associated toric divisor.

Take a line bundle $\Li=\OO_{\Xd}(\sum_{\epsilon\in\Sigma(1)}a_{\epsilon}D_{\epsilon})$ on $\Xd$ and define
\[
\Delta_{\Li} = \{v\in\R^2\ |\ \forall\epsilon\in\Sigma(1),\ \langle v, u_\epsilon \rangle \geq -a_{\epsilon}\}.
\]
Notice that $\Delta_{\Li}$ is in fact only well-defined up to translation by an integer vector.

If $\Li$ is nef, $\Delta_{\Li}$ is a convex lattice polygon (in this paper, lattice polygons are bounded by convention) and $\Sigma$ is a refinement of its normal fan. More precisely, $\Delta_{\Li}$ has one edge for each $\epsilon\in\Sigma(1)$ and the integer length $l_{\epsilon}$ of this edge is equal to the intersection product $(\sum_{\epsilon'\in\Sigma(1)}a_{\epsilon'}D_{\epsilon'})\bullet D_{\epsilon}$ which can be $0$. In particular, we have the following Proposition.

\begin{Proposition}\label{prop:rootandlength}
  Let $\Li$ be a nef line bundle on a smooth complete toric surface $X$ with fan $\Sigma$. Let $n$ be the largest order of a root of $\Li$. Then
\[
 n=\gcd\left(\left\{ l_{\epsilon},\epsilon\in\Sigma(1)\right\}\right).
\]
\end{Proposition}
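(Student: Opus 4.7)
The plan is to prove two divisibilities: if $\Li$ admits a root of order $n$ then $n \mid \gcd\{l_\epsilon\}$, and conversely $g := \gcd\{l_\epsilon\}$ is itself the order of a root. The first implication is essentially formal. If $\LS^{\otimes n} = \Li$ in $\mathrm{Pic}(\Xd)$, then intersecting with any toric divisor $D_\epsilon$ yields $l_\epsilon = \Li \cdot D_\epsilon = n\,(\LS \cdot D_\epsilon)$, so $n$ divides every $l_\epsilon$. Note we never need $\LS$ itself to be nef here; intersection with $D_\epsilon$ is simply a group homomorphism $\mathrm{Pic}(\Xd) \to \Z$.

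For the converse, I would construct a nef line bundle $\LS$ with $\LS^{\otimes g} = \Li$ by ``dividing the polygon $\Delta_\Li$ by $g$''. Let $t_\epsilon$ denote the primitive integer tangent vector to the edge of $\Delta_\Li$ with outer normal $u_\epsilon$ (with an arbitrary convention when $l_\epsilon = 0$, since the corresponding edge degenerates). The closing condition for $\Delta_\Li$ reads $\sum_\epsilon l_\epsilon\, t_\epsilon = 0$, and dividing through by $g$ gives an integral relation $\sum_\epsilon (l_\epsilon/g)\, t_\epsilon = 0$ with $l_\epsilon/g \in \Z_{\geq 0}$. This is exactly the closing condition for a convex lattice polygon $\Delta$ with outer normals among the $u_\epsilon$ and edge lengths $l_\epsilon/g$. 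Since $\Sigma$ refines the normal fan of $\Delta$, the polygon $\Delta$ determines a nef line bundle $\LS$ on $\Xd$ whose intersection with each $D_\epsilon$ is $l_\epsilon/g$.

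The delicate step, which I expect to be the main obstacle, is to promote the numerical equality $\LS^{\otimes g} \cdot D_\epsilon = \Li \cdot D_\epsilon$ for every $\epsilon$ to an equality $\LS^{\otimes g} = \Li$ in $\mathrm{Pic}(\Xd)$. The classical fact I would invoke is that on a smooth complete toric surface a line bundle class is determined by its associated polygon up to lattice translation; equivalently, the ``tuple of intersection numbers'' map $\mathrm{Pic}(\Xd) \to \Z^{\Sigma(1)}$ sending $\Li' \mapsto (\Li' \cdot D_\epsilon)_\epsilon$ is injective. This follows from the standard presentation of $\mathrm{Pic}(\Xd)$ as $\Z^{\Sigma(1)}$ modulo characters, using that the smoothness of $\Xd$ forces any two $u_\epsilon$ indexed by consecutive rays of $\Sigma$ to form a $\Z$-basis of $\Z^2$, which lets one reconstruct a character with prescribed pairings from the inductive relations $a_{\epsilon-1} - b_\epsilon a_\epsilon + a_{\epsilon+1} = 0$ determined by $u_{\epsilon-1} + u_{\epsilon+1} = b_\epsilon u_\epsilon$. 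Granted this injectivity, $\LS^{\otimes g}$ and $\Li$ coincide in $\mathrm{Pic}(\Xd)$, so $\LS$ is the desired root of order $g$.
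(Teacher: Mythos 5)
Your proof is correct and ultimately implements the same geometric idea as the paper (``divide the polygon by $g$''), but the converse direction goes through a different bookkeeping. The paper normalizes a translate of $\Delta_\Li$ so that the origin is a vertex, notes that $\tfrac{1}{m}\Delta_\Li$ is then again a lattice polygon because all edge vectors are $m$ times primitive vectors, deduces from the facet description $Uv\geq -a$ that $m$ divides each $a_\epsilon$ (each supporting hyperplane of $\tfrac{1}{m}\Delta_\Li$ meets a lattice point), and reads off the root directly as $\LS = \OO\!\left(\sum \tfrac{a_\epsilon}{m}D_\epsilon\right)$. You instead build the scaled polygon from the edge-length data via the closing relation, take its associated nef bundle $\LS$, observe the numerical equality $\LS^{\otimes g}\cdot D_\epsilon = \Li\cdot D_\epsilon$, and conclude $\LS^{\otimes g}=\Li$ from injectivity of the intersection map $\Pic(\Xd)\to\Z^{\Sigma(1)}$. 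That injectivity is correct and your sketch of it (reconstructing a character via the three-term recursions forced by consecutive $u_\epsilon$ forming a $\Z$-basis) is sound; one can also cite that on a complete toric surface $\Pic$ is free and numerical equivalence coincides with linear equivalence, so the nondegeneracy of the intersection form does the job. The trade-off is that the paper's route is slightly more elementary (it produces the root divisor explicitly from the facet data and avoids invoking the Pic injectivity), whereas your route is a bit more conceptual and separates cleanly the combinatorial construction of the scaled polygon from the algebraic identification of its bundle. Both are fine; if you keep your version, it would be worth stating the intersection-map injectivity as a named lemma rather than leaving it as a parenthetical sketch, since it is really the load-bearing step.
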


\begin{proof}
 Let $m = \gcd\left(\left\{ l_{\epsilon},\epsilon\in\Sigma(1)\right\}\right)$ and let $\LS $ be a root of order $n$ of $\Li$. Then for any $\epsilon\in\Sigma(1)$,
\[
l_{\epsilon} = \Li\bullet D_{\epsilon} = n(\LS \bullet D_{\epsilon}).
\]
Thus $n$ divides $m$.

On the other hand, let $U$ be the matrix whose rows are given by the vectors $u_\epsilon$. Up to translation, we can assume that $0$ is a vertex of $\Delta_{\Li}$.  In particular, $\frac{1}{m}\Delta_{\Li}$ is  still a convex lattice polygon. Moreover, if $\Delta_{\Li}$ is given by the system $Uv\geq -a$, then $\frac{1}{m}\Delta_{\Li}$ is given by $Uv\geq -\frac{a}{m}$. In particular, $m$ divides all the coordinates of $a$. Thus, we can write
\[
\Li = \sum_{\epsilon\in\Sigma(1)}a_{\epsilon}D_{\epsilon} = m \sum_{\epsilon\in\Sigma(1)}\frac{a_{\epsilon}}{m}D_{\epsilon},
\]
and $\Li$ has a root of order $m$. Thus $m$ divides $n$.
\end{proof}

\begin{Definition}
A convex lattice polygon $\Delta$ is even (respectively odd, respectively prime) if 
$
\gcd\left(\left\{ l_{\epsilon},\epsilon \text{ edge of } \Delta \right\}\right)
$
is even (respectively odd, respectively prime).\\
A nef line bundle $\Li$ on a smooth complete toric surface $X$ is even (respectively odd, respectively prime) if the polygon $\Delta_{\Li}$ is even (respectively odd, respectively prime).
\end{Definition}

If $\Li$ is ample, then the normal fan of $\Delta_{\Li}$ is $\Sigma$. Moreover, since $\Xd$ is smooth, the polygon $\Delta_{\Li}$ is also smooth, $i.e.$ any pair of primitive integer vectors directing two consecutive edges of $\Delta_{\Li}$ generates the lattice. Conversely, a smooth convex lattice polygon $\Delta$ of dimension 2 defines a smooth complete toric surface $\Xd$ as follows: label the elements of $\Delta \cap \Z^2 = \{ (a_1,b_1),...(a_m,b_m)\}$ and consider the monomial embedding $(\C^\ast)^2 \rightarrow \cp{m}$ given by
\[ (z,w) \mapsto \left[ z^{a_1}w^{b_1},..., z^{a_m}w^{b_m}\right]. \]
Then $\Xd$ is defined as the closure of $(\C^\ast)^2$ in $\cp{m}$. In particular, it always comes with $(z,w)$-coordinates and the associated complex conjugation. The line bundle $\Li$ on $\Xd$  given by the inclusion in $\cp{m}$ is such that $\Delta = \Delta_{\Li}$. If $\Delta'$ is obtained from $\Delta$ by an invertible affine transformation $A:\R^2\rightarrow\R^2$ preserving the lattice, then $A$ induces an isomorphism between the two toric surfaces obtained from $\Delta$ and $\Delta'$ which pulls back $\Li'$ to $\Li$. Indeed, the lattice $\Z^2\subset \R^2$ is naturally isomorphic to the space of characters on $\ttor$ via the $(z,w)$-coordinates. The map $A^\vee : \ttor \rightarrow \ttor$ dual to $A$ induces the desired isomorphism of toric surfaces.

When $\itr(\Delta)\cap \Z^2$ is non-empty, we can consider the adjoint polygon $\da$ of $\Delta$ defined as the convex hull of the interior lattice points of $\Delta$. The number of lattice points of $\da$ is equal to the arithmetic genus of the curves in $\LL$ which we denote by $g_{\Li}$, see \cite{Kho}. Denote also by $b_\Li$ the number of lattice points in $\partial \Delta$. $b_\Li$ is then equal to the intersection multiplicity of a curves in $\LL$ with the divisor $(\sum_{\epsilon\in\Sigma(1)}D_{\epsilon})$.

\begin{Proposition}\label{prop:poly}
  Let $\Xd$ be a smooth complete toric surface with fan $\Sigma$ and let $\Li$ be an ample line bundle on $\Xd$ such that $g_{\Li}\geq 1$.
  \begin{enumerate}
  \item The adjoint line bundle $K_{\Xd}\otimes\Li$ is nef with empty base locus and $\Delta_{K_{\Xd}\otimes\Li} = \da$. In particular, for each $\epsilon\in\Sigma(1)$ the associated edge of $\da$ has length $l_{\epsilon}-D_{\epsilon}^2-2$. (\cite[Lemma 2.3.1 and Proposition 2.4.2]{Koe}, \cite[Theorem 2.7]{Oda})
\item Let $\phi_{K_{\Xd}\otimes \Li} : \Xd\rightarrow \LK^*$ be the natural map. We have $\dim(\im(\phi_{K_{\Xd}\otimes \Li})) = \dim(\da)$.
\item If $\dim(\da) = 2$, then $\da$ is smooth. (\cite[Lemma 5]{Ogata})
  \end{enumerate}\qed
\end{Proposition}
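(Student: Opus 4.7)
The plan is to translate each of the three items into a statement about the polygon $\Delta_\Li$ and its adjoint $\da$, verifying (1) and (2) by direct toric calculations and deferring (3) to the cited lemma of Ogata.

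For (1), the starting point is the standard identity $K_\Xd = \OO_\Xd(-\sum_{\epsilon\in\Sigma(1)} D_\epsilon)$, valid on every smooth complete toric surface. This gives $K_\Xd\otimes\Li = \OO_\Xd(\sum_\epsilon (a_\epsilon-1)D_\epsilon)$, whose associated polygon is defined by the shifted inequalities $\langle v, u_\epsilon\rangle \geq -a_\epsilon+1$. Equivalently, $\Delta_{K_\Xd\otimes\Li}$ is the polygon obtained from $\Delta_\Li$ by pushing every supporting line inward by one lattice step in the $u_\epsilon$-direction, and its lattice points are exactly the interior lattice points of $\Delta_\Li$. Using the hypothesis $g_\Li \geq 1$ to ensure nonemptiness, one identifies $\Delta_{K_\Xd\otimes\Li}$ with $\da$; the nefness and empty base locus of $K_\Xd\otimes\Li$ then follow from \cite{Koe} and \cite{Oda}. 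The edge-length formula is a local computation at each edge $E_\epsilon$: writing the two endpoints of $E_\epsilon$ in the $\Z$-bases dual to $(u_\epsilon,u_{\epsilon^\pm})$ -- which are $\Z$-bases by smoothness of $\Xd$ -- the one-unit inward shift moves each endpoint along $E_\epsilon$ by an explicit amount whose total, after applying the smoothness relation $u_{\epsilon^-}+u_{\epsilon^+}+D_\epsilon^2 u_\epsilon=0$, equals exactly $D_\epsilon^2+2$.

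For (2), I would observe that on the open torus $\ttor\subset\Xd$ the morphism $\phi_{K_\Xd\otimes\Li}$ is the monomial map $(z,w)\mapsto [z^{\alpha_1}w^{\beta_1}:\cdots:z^{\alpha_N}w^{\beta_N}]$, where $(\alpha_i,\beta_i)$ enumerates $\da\cap\Z^2$; this is exactly the general description of toric maps recalled in \S\ref{sec:toric} applied to the polygon $\da$ from part (1). Consequently $\phi_{K_\Xd\otimes\Li}(\ttor)$ is a torus orbit whose character lattice is generated by the pairwise differences $(\alpha_i-\alpha_j,\beta_i-\beta_j)$, which span the affine hull of $\da$. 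Hence the image has dimension $\dim(\da)$, and by density of $\ttor$ in $\Xd$ the same holds for the full image.

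For (3), this is exactly Lemma 5 of \cite{Ogata}, and it is the technical heart of the proposition. The heuristic is clear: at each vertex of $\da$ the two adjacent primitive edge directions are also edge directions of $\Delta_\Li$, so the smoothness of $\Delta_\Li$ transfers directly if no edge of $\Delta_\Li$ collapses under the inward shift. The real obstacle is precisely that intermediate edges can collapse, in which case two consecutive edges of $\da$ correspond to non-consecutive edges of $\Delta_\Li$, and verifying that their primitive directions still form a $\Z$-basis requires a careful induction along the boundary. I expect this to be the main difficulty, and I would quote Ogata's statement verbatim rather than reproduce his argument.
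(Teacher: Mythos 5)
The paper states this proposition without proof, deferring entirely to the cited references of Koelman, Oda, and Ogata. Your proposal is correct and matches this approach, with the added value that you correctly unpack the two local computations: the edge-length formula (your determination that shifting the supporting lines inward by one unit shrinks the $\epsilon$-edge by $D_\epsilon^2+2$, via the smoothness relation $u_{\epsilon^-}+u_{\epsilon^+}+D_\epsilon^2 u_\epsilon=0$, is exactly right) and the dimension count in (2) via the monomial parametrization of the torus orbit. You also correctly identify where the genuine content lies — namely that the inward-shifted polytope is a lattice polytope equal to $\da$, and that $\da$ stays smooth even when edges of $\Delta$ collapse — and appropriately defer those to the citations, as the paper does.
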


If $\Delta$ is a smooth convex lattice polygon, we call vertices its extremal points.

\section{Simple Harnack curves}\label{sec:harnack}

Let $\Delta \subset \Z^2 \otimes \R $ be a smooth 2-dimensional convex lattice polygon, $\Xd$ the associated toric surface and $\Li$ the line bundle on $\Xd$ given by $\Delta$. Recall that $\Xd$ comes with an open dense torus $(\C^\ast)^2 \subset \Xd$ of coordinates $(z,w)$. These coordinates induce a complex conjugation on both $\Xd$ and $\Li$. Define the amoeba map $\A : (\C^\ast)^2 \rightarrow \R^2$ by 
\[\A(z,w) = (\log \vert z \vert, \log \vert w \vert ). \]
For any algebraic curve $C \subset \Xd$, denote $C^\circ := C \cap (\C^\ast)^2$ and $\R C^\circ := C \cap (\R^\ast)^2$.

\begin{Definition} A non degenerate real section $C \subset \Xd$ of $\Li$ is a (possibly singular) simple Harnack curve if the amoeba map $ \A : C^\circ \rightarrow \R^2$ is at most 2-to-1.
\end{Definition}

\begin{Remark} The original definition of \sh curves can be found in \cite{MR}, as well as its equivalence with the definition given above.
\end{Remark}

For a \sh curve $C$, the amoeba map $\A$ is in fact 1-to-1 on $\R C^\circ $ and the outer boundary of $\A(C^\circ)$ is the image of a single connected component of $\R C$. Hence, $\A(C^\circ)$ determines $C$ up to sign. We get rid of this sign ambiguity as follows. Choose a vertex $v \in \Delta$ with adjacent edges $\epsilon_1$ and $\epsilon_2$. By the original definition \cite{Mikh}, there is a unique connected arc $\alpha \subset \R C^\circ$ joining $\epsilon_1$ to $\epsilon_2$. We restrict ourselves to the set of smooth \sh curves for which $\alpha$ sits in the $(+,+)$-quadrant of $\R^2$. For technical reasons, we also assume that $C$ intersects the divisor $X \setminus \ttor$ transversally, implying that $C^\circ$ is a compact Riemann surface with $b_\Li$ points removed. We denote by $\Hd \subset \PSec$ the set of such \sh curves. 

By \cite{Mikh}, any curve $C \in \Hd$ is such that $b_0(\R C)= g_\Li +1$, $i.e.$ $C$ is maximal, and exactly $g_\Li$ connected components of $\R C$ are contained in $(\C^\ast)^2$. We refer to them as the $A$-cycles of $C$.

\begin{Theorem}\label{thm:acycle}
Any  $A$-cycle of a curve $C_0 \in \Hd$ is a vanishing cycle in the linear system $\vert \Li \vert$.
\end{Theorem}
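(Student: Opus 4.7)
The plan is to produce, for any chosen $A$-cycle $\delta\subset C_0$, an explicit path $\gamma\colon[0,1]\to|\Li|$ with $\gamma([0,1))\subset\Hd$ and $\gamma(1)\in\D$ a nodal curve whose unique node is the limit of $\delta$. Once such a path is built, the local model \eqref{local} applied at the node identifies the vanishing cycle of $\gamma$ with the collapsing oval, showing that $\delta$ is a vanishing cycle of $C_0$ in $|\Li|$.

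First, I would establish a parametrization of $\Hd$ generalizing the Kenyon--Okounkov description of the space of simple Harnack curves. The expected picture is that there is a homeomorphism between (an open piece of) $\Hd$ and a product whose coordinates include, for each $A$-cycle, the area of the corresponding ``hole'' of the amoeba $\A(C^\circ)$, together with $b_\Li-1$ ``position at infinity'' parameters controlling how $C$ meets $\Xd\setminus\ttor$. The key structural input is that in the simple Harnack locus the map $\A\colon C^\circ\to\R^2$ is $2{:}1$ away from $\R C^\circ$, so each connected bounded complement of $\A(C^\circ)$ is bounded by the amoeba of exactly one $A$-cycle; the corresponding area gives a well-defined, and by KO free, coordinate on $\Hd$.

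Given an $A$-cycle $\delta$, I would fix all boundary parameters and all other hole-area parameters and let only the area $s$ of the hole bounded by $\A(\delta)$ decrease monotonically to $0$. This yields a one-parameter family $\gamma(t)\in\Hd$ for $t\in[0,1)$. The crucial analytic step is to understand the limit $t\to 1$: as $s\to 0$, the amoeba of $\delta$ collapses to a single interior point $p_0\in\R^2$, and because $C$ is real and simple Harnack, the real oval $\delta\subset\R C^\circ$ shrinks to a real isolated point of the limit curve $C_1$. Locally near this point the defining equation converges, up to change of real coordinates, to a Morse model $u^2+v^2=s$, producing exactly one node and no other degeneration (the amoeba map stays $2{:}1$, so no other oval can merge with anything else).

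It then remains to verify two global facts: (i) during $t\in[0,1)$ the curve stays smooth and stays in $\Hd$, which follows from the openness of the simple Harnack condition together with the fact that shrinking one hole does not force any other oval to contract or any tentacle of the amoeba to pinch; and (ii) $C_1$ has exactly one singular point. Item (ii) is the main obstacle: one must rule out that the collapse of $\delta$ is accompanied by another oval collapsing or by a singularity appearing on the toric boundary. For this I would rely on the transversality requirement in the definition of $\Hd$ (which keeps the boundary behaviour rigid) and on the KO-type independence of the area parameters, guaranteeing that one can vary the single area $s$ without altering the others. Once (i) and (ii) are established, matching $\gamma$ with the standard model \eqref{local} around the node identifies the vanishing cycle as $\delta$, completing the proof.
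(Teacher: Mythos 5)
Your proposal matches the paper's proof in strategy and in all essential ideas: parameterize a slice of $\Hd$ (curves with fixed intersection with the toric boundary) by the areas of the bounded holes of $\A(C^\circ)$, as in Kenyon--Okounkov, then shrink one hole area to zero and match the collapse with the local Morse model \eqref{local}. The paper's execution is slightly cleaner at exactly the point you flag as ``the main obstacle.'' Rather than arguing separately that the path stays smooth in $\Hd$ and that the limit has a unique node, the paper observes that on the slice $H$ of curves with fixed boundary points the map $Area : H \to \R_{\geq 0}^{g_\Li}$ is a \emph{proper local diffeomorphism}, hence a covering map; one then simply lifts the straight segment $T_j$ shrinking the $j$-th coordinate. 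The properness and covering property absorb both of your concerns (i) and (ii) at once, because $H$ is closed, its singular members have only isolated real double points (a structural fact quoted from Mikhalkin--Rullg\aa rd), and the lifted path terminates at the boundary face where exactly one area coordinate vanishes. The other point worth noting is that the local-diffeomorphism property of $Area$ is not free in the general toric case: it requires generalizing \cite[Proposition 6]{KO}, which the paper does via a holomorphic-differentials computation (their Lemma \ref{lem:difform}) showing the relevant Jacobian is the period matrix of $C$; your proposal invokes ``KO-type independence of the area parameters'' for this but does not indicate how to establish it beyond $\cp{2}$. With those two refinements your sketch would coincide with the paper's proof.
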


\begin{Theorem}\label{thm:connected}
The set $\Hd$ is connected.
\end{Theorem}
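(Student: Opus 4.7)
The plan is to build on the parametrization of simple Harnack curves established by Kenyon and Okounkov in \cite{KO}. Their main structural theorem says that the moduli space of \sh curves with Newton polygon $\Delta$, modulo the action of $(\C^\ast)^2$ by coordinate rescaling (including sign), is homeomorphic to the open interior of an explicit convex polytope $P$. The coordinates on $P$ are the areas of the $g_\Li$ bounded complementary components of the amoeba (one per $A$-cycle) together with the positions of the amoeba tentacles along each edge of $\Delta$. Transverse intersection of $C$ with $\Xd\setminus\ttor$ corresponds exactly to no two tentacles colliding along an edge, while smoothness of $C$ corresponds to all oval-areas being strictly positive; together these single out the open interior of $P$.

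First I would observe that the positive real torus $(\R_{>0})^2 \subset \ttor$ acts on $\PSec$ by translation in $(z,w)$-coordinates, preserves $\Hd$, and is connected. It therefore suffices to show that the quotient $\Hd / (\R_{>0})^2$ is connected. Next, I would argue that the normalization built into the definition of $\Hd$ — namely the requirement that the distinguished arc $\alpha$ near the vertex $v$ sit in the $(+,+)$-quadrant — uniquely resolves the sign ambiguity in the Kenyon–Okounkov parametrization. Indeed, the sign group acts by $(\Z/2)^2$ on the amoeba-level data through its action on the real part of $C$, and exactly one orbit representative has its $v$-arc landing in the positive quadrant.

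Combining these two reductions, the amoeba map $C \mapsto \A(C^\circ)$ together with the translation data yields a continuous bijection $\Hd \to (\R_{>0})^2 \times P^\circ$. Kenyon–Okounkov's construction provides a continuous inverse (one reads off the defining polynomial from the amoeba data in a continuous manner), so this is a homeomorphism. Since $(\R_{>0})^2 \times P^\circ$ is a product of two connected sets (in fact, convex ones), it is connected, and hence so is $\Hd$.

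The main obstacle I anticipate is Step 3: promoting the continuous bijection to a homeomorphism, and in particular verifying that the conditions defining $\Hd$ (smoothness and transversality with the toric boundary) correspond precisely to the open interior $P^\circ$ rather than to a possibly disconnected relatively open subset. This amounts to understanding the codimension-one boundary strata of the moduli space of \sh curves, which by \cite{KO} correspond either to an $A$-cycle collapsing to a real nodal point (oval area $= 0$) or to two tentacles colliding at a point on $\Xd\setminus\ttor$ (tentacle-gap $= 0$). Both produce curves outside $\Hd$, so excluding them does not disconnect $\Hd$ but instead realizes it as the preimage of the honestly open polytope $P^\circ$. This structural picture, for which some of the arguments of \cite{KO} will likely need to be re-examined or generalized — as the authors indicate is a goal of Section \ref{sec:harnack} — gives the desired connectedness.
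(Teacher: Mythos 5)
Your argument hinges on the full Kenyon--Okounkov structure theorem: that the moduli space of simple Harnack curves (modulo torus rescaling and sign) is homeomorphic to the interior of an explicit convex polytope, with oval areas and tentacle positions as coordinates. This theorem is established in \cite{KO} only for curves in $\cp{2}$. The paper does \emph{not} prove its generalization to an arbitrary smooth toric surface; on the contrary, it explicitly flags this as an open expectation in a remark right after the proof of this theorem (``We believe that the space of Harnack curves for any smooth toric surface admits a parametrization similar to the one given in \cite{KO}\ldots In particular, we expect this set to be contractible.''). You acknowledge in your final paragraph that verifying the boundary-strata picture ``will likely need to be re-examined or generalized,'' but this is not a technical loose end to be patched up --- it is precisely the missing theorem. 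Assuming it begs the question that the whole of Section~\ref{sec:harnack} is carefully working around.

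The paper's actual proof deliberately uses only the pieces of \cite{KO} that can be transported to general $\Delta$. It proceeds in three steps: (a) using only the fact that $Area : H \to \R_{\geq 0}^{g_\Li}$ is a proper covering map (extracted from the local period-matrix computation in the proof of Theorem~\ref{thm:acycle}, which does generalize via Lemma~\ref{lem:difform}), degenerate both $C_0$ and $C_1$ along straight segments in area-space to rational Harnack curves $C_0^\ast$, $C_1^\ast$; (b) connect $C_0^\ast$ to $C_1^\ast$ inside the space of rational Harnack curves, which does admit an explicit parametrization \eqref{eq:param}--\eqref{eq:param2} generalizing \cite[Proposition~4 and \S4.1]{KO} and is manifestly connected; (c) push the resulting path of nodal curves back into $\Hd$ by perturbing with real sections $h\in \R H^0(\Xd,\Li_{\da})$ having prescribed signs at the nodes, and observing that the relevant orthant varies continuously with the parameter. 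This local-to-global argument bypasses the need to identify the global shape of the moduli space. To repair your proof you would first have to prove the polytope parametrization for general $\Delta$, which is a strictly stronger statement than the connectedness you are after.
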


The latter theorems are given in \cite{KO} in the case $\Xd = \cp{2}$. The proofs we provide below rely mainly on the latter reference. We only provide the extra arguments that we require. Before doing so, we recall one of the main ingredients. We identify $\Sec$ with the space of Laurent polynomial with Newton polygon $\Delta$. For any $f \in \Sec$, the Ronkin function $N_f : \R^2 \rightarrow \R$ given by 
\[ N_f (x,y) = \frac{1}{(2i\pi)^2} \iint_{\substack{\vert z \vert = e^{x} \\ \vert w \vert = e^{y}  }} \log \vert f(z,w) \vert \frac{dz dw}{zw} \]
is convex, piecewise linear with integer slope on every connected component of $\R^2 \setminus \A(C^\circ)$, where $C= f^{-1}(0) \subset \Xd$. It induces the order map 
\[  
\begin{array}{rcl}
\pi_0(\R^2 \setminus \A(C^\circ)) & \rightarrow & \Delta \cap \Z^2\\
U & \mapsto & grad \, N_f (U)
\end{array}.
\]
For $C \in \Hd$, the latter map is a bijection mapping any compact connected component of $\R^2 \setminus \A(C^\circ)$ in $\da \cap \Z^2$.
Note that for two polynomials defining the same fixed curve $C$, the associated Ronkin functions differ only by an additive constant. It follows that the map $(grad \, N_f) \circ \A : C^\circ \rightarrow \R^2$ does not depend on the choice of $f$. We refer to \cite{FPT} and \cite{PR} for more details. 

The proof of Theorem \ref{thm:acycle} relies on the fact that \cite[Proposition 6]{KO} extends to the present case. To show that, we need an explicit description of the space of holomorphic differentials on a curve $C\in \Hd$.

\begin{Lemma}\label{lem:difform}
Let $C\in\Hd$ and $f\in H^0(\Xd,\Li_\Delta)$ such that $C=\overline{\left\lbrace f=0\right\rbrace}$. The space of holomorphic differentials on $C$ is isomorphic to the space of sections of $\Li_{\Delta_a}$ via the map
\[ h(z,w) \mapsto \dfrac{h(z,w)}{\partial_w f(z,w)\, zw}dz. \]
\end{Lemma}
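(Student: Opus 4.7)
The proposed map is the Poincaré residue along $C$ of the meromorphic $2$-form $\tfrac{h(z,w)}{f(z,w)}\cdot\omega$, where $\omega = \tfrac{dz \wedge dw}{zw}$ is a nowhere-vanishing holomorphic $2$-form on $(\C^*)^2$. The plan is to realize the map as the connecting map of a residue exact sequence, use cohomological vanishing on toric surfaces to show it is an isomorphism, and then match the abstract residue with the explicit formula by a local computation on the torus.

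First I would set up the residue exact sequence. Tensoring the standard sequence $0 \to \OO_X(-C) \to \OO_X \to \OO_C \to 0$ with $K_X \otimes \Li_\Delta$, and using $K_X \otimes \Li_\Delta \cong \Li_{\Delta_a}$ from Proposition \ref{prop:poly}(1) together with adjunction $(K_X \otimes \Li_\Delta)|_C = K_C$, one obtains
\[ 0 \to K_X \to \Li_{\Delta_a} \to K_C \to 0, \]
the rightmost arrow being the Poincaré residue. Passing to the long exact sequence in cohomology, the statement reduces to $H^0(X, K_X) = H^1(X, K_X) = 0$, which holds on any smooth complete toric surface: by Serre duality these are dual to $H^2(X, \OO_X)$ and $H^1(X, \OO_X)$, both of which vanish since $X$ is rational. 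A dimension check $\dim H^0(X, \Li_{\Delta_a}) = \#(\Delta_a \cap \Z^2) = g_\Li = \dim H^0(C, K_C)$ provides a sanity test.

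To identify the abstract residue with the explicit formula, trivialize $K_X$ on $(\C^*)^2$ by $\omega$, and $\Li_\Delta,\Li_{\Delta_a}$ by the monomial embedding. Under the isomorphism $\Li_{\Delta_a} \cong K_X \otimes \Li_\Delta$, the section $h$ corresponds to the $2$-form $(h/f)\,\omega$ on $X$, which is regular as a section of $K_X \otimes \Li_\Delta$ and meromorphic with a simple pole along $C$ as a $2$-form. Near a point of $C^\circ$ where $\partial_w f \neq 0$, the implicit function theorem parametrizes $C$ locally as $w = \phi(z)$, and the expansion $f(z,w) = \partial_w f(z,\phi(z))(w-\phi(z)) + O((w-\phi(z))^2)$ together with a one-variable residue computation gives precisely $\tfrac{h(z,w)}{\partial_w f(z,w)\, zw}\,dz$ along $C$. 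Global holomorphicity of the resulting differential across the toric boundary $C \cap (X \setminus (\C^*)^2)$ and across the critical points of the projection $z|_C$ is automatic from the fact that the abstract residue map lands in $H^0(C, K_C)$; the main technical input is the cohomological vanishing above, after which everything else is either a formal consequence (adjunction, Proposition \ref{prop:poly}(1)) or a routine Laurent-series computation, so I do not anticipate a genuine obstacle.
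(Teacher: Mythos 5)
Your argument is correct, and it takes a genuinely different route from the paper's. The paper proves the lemma by hand: it observes that both spaces have dimension $g_\Li$, that the map is injective, and then verifies directly that the resulting meromorphic differential is actually holomorphic. That last verification is done locally near a vertex of $\Delta$ placed at the origin with coordinate edges (where $h/(zw)$ is a polynomial and the zeros of $dz\vert_C$ cancel the poles of $1/\partial_w f$), and is then propagated to all of $C^\circ$ by checking that the formula is equivariant under lattice-preserving affine transformations, via an explicit computation in logarithmic coordinates. Your proof instead recognises the map as the Poincar\'e residue associated with the adjunction sequence $0 \to K_X \to K_X\otimes\Li \to K_C \to 0$, identifies $K_X\otimes\Li\cong\Li_{\Delta_a}$ by Proposition~\ref{prop:poly}, kills $H^0(X,K_X)$ and $H^1(X,K_X)$ by Serre duality and rationality of the toric surface, and matches the abstract residue with the explicit formula by a one-variable residue calculation near a smooth point of the torus part of $C$. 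What your approach buys is conceptual economy: it makes the isomorphism manifest at once (no separate injectivity or dimension count is logically needed, though the dimension check is a good sanity test) and explains where the formula comes from. What the paper's approach buys is that it is self-contained, avoids sheaf cohomology entirely, keeps track explicitly of the real structure (which matters elsewhere in the section, where $f$ is taken real and the tangent space to the Harnack locus is identified with real sections of $\Li_{\Delta_a}$), and provides the explicit $\mathrm{SL}_2(\Z)$-equivariance formula, which is convenient for the subsequent tropical manipulations. Both routes are sound.
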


\begin{proof}
The space of sections of $\Li_{\Delta_a}$ has the expected dimension $g_\Li$ and two such meromorphic differentials are linearly independant for different $h$'s. It remains to show that they are in fact holomorphic. We proceed in two steps.

Assume first that $\Delta \subset \N^2$ has a vertex at the origin with adjacent edges given by the coordinates axes. Then, the plane $\C^2$ given by the coordinates $(z,w)$ provides a chart of $\Xd_\Delta$ at this vertex In such case, $h(z,w)/zw$ is a polynomial and the poles of $1/\partial_w f(z,w)$ are compensated by the vanishing of $dz$ on $\C^2\cap C$. The latter fact follows from a straightforward computation, using a local parametrization of $C$. It implies that any differential
\[ \varOmega = \dfrac{h(z,w)}{\partial_w f(z,w)\, zw}dz \]
is holomorphic on $\C^2\cap C$. 

We now show that one can apply the following argument at any vertex of $\Delta$, which concludes the proof. It relies on the following claim: for any lattice preserving transformation $A : \R^2 \rightarrow \R^2$ sending $\Delta$ to some $\Delta'$ and dual map $A^\vee:\ttor \rightarrow\ttor$, then 
\[ A^\vee_\ast \varOmega = \dfrac{h'(s,t)}{\partial_t f'(s,t)\, st}ds \]
where $h':=A_\ast h \in \Li_{\Delta'_a}$, $f':=A_\ast f \in \Li_{\Delta'}$ and $(s,t):=A^\vee(z,w)$.  Any such map $A$ can be decomposed into an integer translation and an $Sl_2(\Z)$-linear map. If $A$ is a translation by some vector $(a,b)\in\Z^2$,  $A^\vee$ is the identity. One has $h'=z^aw^bh$, $f'=z^aw^bf$. On the curve $C$, one has $\partial_w f'(z,w) =z^aw^b\partial_w f(z,w)$ as $f_{\vert C}=0$. It follows that
\[ \dfrac{h'(z,w)}{\partial_w f'(z,w)\, zw}dz  = \varOmega.\]

If $A(n,m)=(dn-bm, \, -cn +am)$, then $A^\vee(z,w)= (z^aw^b,z^cw^d)$, and by definition $h(z,w)=h'(s,t)$. Set the logarithmic coordinates $(\mathbf{z}, \mathbf{w})=(\log(z),\log(w))$ and $(\mathbf{s}, \mathbf{t})=(\log(s),\log(t))$, and define $\mathbf{f} (\mathbf{z}, \mathbf{w}) := f(z,w)$ and $\mathbf{f}' (\mathbf{z}, \mathbf{w}) := f'(z,w)$. Observing that $ \mathbf{f}'(\mathbf{s},\mathbf{t}) = \mathbf{f}(d\mathbf{s}-b \mathbf{t}, -c\mathbf{s}+a \mathbf{t}) $ and that 
$$ \dfrac{d\mathbf{z}}{\partial_\mathbf{w} \mathbf{f}(\mathbf{z},\mathbf{w})} = - \dfrac{d\mathbf{w}}{\partial_\mathbf{z} \mathbf{f}(\mathbf{z},\mathbf{w})} $$
on $\left\lbrace \mathbf{f}=0 \right\rbrace$, we compute 
\begin{flushleft}
$\qquad \displaystyle \dfrac{ds}{\partial_t f'(s,t)\, st}=
\dfrac{d\mathbf{s}}{\partial_\mathbf{t} \mathbf{f}'(\mathbf{s},\mathbf{t})}=
 \dfrac{a\, d\mathbf{z} + b\, d\mathbf{w}}{-b\, \partial_\mathbf{z} \mathbf{f}(\mathbf{z},\mathbf{w}) + a\, \partial_\mathbf{w} \mathbf{f}(\mathbf{z},\mathbf{w}) }$
\end{flushleft}
 \begin{flushright}
$  = \dfrac{d\mathbf{z}}{\partial_\mathbf{w} \mathbf{f}(\mathbf{z},\mathbf{w})} =
\dfrac{dz}{\partial_w f(z,w)\, zw}.  \qquad$
\end{flushright}
\end{proof}

\begin{proof}[Proof of Theorem \ref{thm:acycle}]
To begin with, we briefly describe the settings of \cite[Propositions 6 and 10]{KO}. Consider the space $H$ of simple Harnack curves in $\LL$ with fixed intersection points with $\Xd \setminus (\C^\ast)^2$. Note that $H$ is closed and has dimension $g_\Li$, and the only singular curves in $H$ have isolated real double points (see \cite{MR}).

Consider then the continuous and proper map $Area : H \rightarrow \R_{\geq 0}^{g_\Li}$ that associates to any curve $C$ the area of the holes of $\A(C^\circ)$. We show below that the map $Area$ is a local diffeomorphism. In particular, it is a covering map. Now for any curve $C \in H$, denote $Area(C)=(a_1, \, a_2, \, ..., \, a_{g_\Li})$ and $T_j :=\left\lbrace (a_1, \, ..., \, t a_j, \, ..., \, a_{g_\Li}) \, \vert \, t\in [0,1] \right\rbrace$, for $1\leq j \leq g_\Li$. As the map $Area$ is a covering map, we can lift the segment $T_j$ to a path in $\LL$ starting at $C$. By construction, this path ends at a curve whose only singularity is a node and the corresponding vanishing cycle is the $j$-th $A$-cycle of $C$.

Let us now show that $Area$ is a local diffeomorphism. To that aim, we show that \cite[Proposition 6]{KO} holds true in the present case. Namely, fixing a simple Harnack curve $C\in H$ defined by a polynomial $f$, we check that the intercepts of the $g_\Li$ affine linear functions supporting 
$N_f$ on the compact connected component of $\R^2 \setminus \A(C^\circ)$ provide local coordinates on $H$ near $C$. We reproduce the 
computation of \cite{KO}: from Lemma \ref{lem:difform}, we know that for the a defining polynomial $f \in H^0(\Xd,\Li)$ of $C$, any 
holomorphic differential on $C$ has the form
\[ \varOmega = \dfrac{h(z,w)}{\partial_w f(z,w)\, zw}dz \]
where $h\in H^0(\Xd,\Li_{\Delta_a})$. Taking $f$ to be a real polynomial, the tangent space of $H$ at $C$ is identified with the space of real polynomials $h$ in  $ H^0(\Xd,\Li_{\Delta_a})$. For any points $(x,y)$ contained in a compact component of $\R^2 \setminus \A(C^\circ)$, we can compute the variation of $N_f$ as in \cite[Proposition 6]{KO}
\begin{flushleft}
$  \qquad \displaystyle \left. \dfrac{d}{dt} N_{f+th} (x,y) \right|_{t=0}= \mathfrak{Re} \frac{1}{(2i\pi)^2} \iint_{\substack{\vert z \vert = e^{x} \\ \vert w \vert = e^{y}  }} \dfrac{h(z,w)}{f(z,w)} \frac{dz dw}{zw} $
\end{flushleft}
\begin{flushright}
$  \displaystyle = \mathfrak{Re} \frac{1}{2i\pi} \int_{\vert z \vert = e^{x}} \sum_{\substack{f(z,w)=0 \\ \vert w \vert < e^{y}  }} \dfrac{h(z,w)}{\partial_w f(z,w)} \frac{dz}{zw} =  \frac{1}{2i\pi} \int_{\beta} \dfrac{h(z,w)}{\partial_w f(z,w)} \frac{dz}{zw} \qquad $
\end{flushright}
where $\beta$ is the $B$-cycle $\left\lbrace \vert z \vert = e^{x}, \, \vert w \vert < e^{y}   \right\rbrace \cap C$. The latter is anti-invariant by complex conjugation, justifying that we can omit $\mathfrak{Re}$ in the last equality. Hence, the differential of the map giving the intercepts in terms of the coefficients of $f$ is the period matrix of $C$, which is invertible. 

Now the variational principle of Proposition 8 and the discussion opening Section 4.5 in \cite{KO} hold true in the present case as no argument depends on $\Delta$. It implies that the map giving the areas of the holes of $\A(C^\circ)$ in terms of the intercepts is a local diffeomorphism. We conclude that the map $Area$ is a local diffeomorphism.
\end{proof}

\begin{proof}[Proof of Theorem \ref{thm:connected}]
The strategy of the proof is as follows: we first show that any two curves $C_0$, $C_1 \in \Hd$ can be continuously degenerated to rational simple Harnack curves $C_0^\ast$ and $C_1^\ast$. Then we show that the space of rational simple Harnack curves is path connected, and construct a path inside the latter space joining $C_0^\ast$ to $C_1^\ast$. Finally, we show that the resulting path joining $C_0$ to $C_1$ can be deformed in $\Hd$.

First, we use the same argument as in the proof of Theorem \ref{thm:acycle} involving \cite[Proposition 10]{KO}. If $(a_{j,1}, \, a_{j,2}, \, ..., \, a_{j,g_\Li})$ is the vector of the area of the holes of $\A(C^\circ_j)$, $j=0,1$, we  can construct two continuous paths of simple Harnack curves $\left\lbrace C_{j,s} \, \vert \, s \in [0,1] \right\rbrace$ such that $C_{j,1}=C_j$ and the area of the holes of $\A(C^\circ_{j,s})$ is $s (a_{j,1}, \, ..., \, a_{j,g_\Li})$. Notice that $C_{j,s}\in \Hd$ for $s>0$. The curves $C_{j,0}$ are the curves $C_j^\ast$ announced above.

The space of rational Harnack curves admits a parametrization similar to the one given in \cite[Section 4.1]{KO}. Label the edges of $\Delta$ counterclockwise by $\epsilon_1$, ..., $\epsilon_n$, with respective primitive integer normal vectors $v_1=(a_1,b_1)$, ..., $v_n=(a_n,b_n)$ pointing outwards, and finally $l_j:=l_{\epsilon_j}$. Real rational curves of Newton polygon $\Delta$ can always be parametrized by
\begin{eqnarray}\label{eq:param}
 t \in \cp{1} \mapsto \Big( \alpha \prod_{ j=1}^n \prod_{k=1}^{l_j} (t-c_{jk})^{a_j}, \, \beta \prod_{ j=1}^n \prod_{k=1}^{l_j} (t-c_{jk})^{b_j}\Big) \in \Xd
\end{eqnarray}
with $\alpha$, $\beta \in \R^\ast$ and the $c_{jk}$ are either real or appear in complex conjugated pairs. Such representation is unique up to the action of $PGL_2(\R)$ on the parameter $t$. The same arguments as in \cite[Proposition 4]{KO} apply so that the curve \eqref{eq:param} is a rational simple Harnack curve if and only if all the $c_{jk}$ are real and
\begin{eqnarray}\label{eq:param2}
 c_{11}\leq ... \leq c_{1l_1} < c_{21} \leq ...\leq c_{2l_2} < ... < c_{n1}\leq ... \leq c_{nl_n}.   
\end{eqnarray}
For a curve in $\partial \Hd$, the signs of $\alpha$ and $\beta$ are determined by the sign convention made on $\Hd$.
The space of parameters $\alpha$, $\beta$ and $c_{jk}$'s satisfying \eqref{eq:param2} is connected. It implies that the space of rational simple Harnack curves in $\partial \Hd$ is connected. Hence, one can construct a continuous path $\left\lbrace C^\ast_t \, \vert \, t\in[0,1] \right\rbrace$ in this space joining $C_0^\ast$ to $C_1^\ast$.

The deformations of $C^\ast_t$ towards $\Hd$ with fixed points on $\Xd \setminus (\C^\ast)^2$ are given by real polynomials  $h\in H^0(\Xd,\Li_{\Delta_a})$ having prescribed signs at the nodes of $C^\ast_t$, once we choose a continuous path of defining real polynomials $f_t$ for $C^\ast_t$. Indeed, $h$ and $f_t$ need to have opposite sign around each node for an oval to appear. This sign distribution does not depend on $t$ as the topological pair $\big( (\R^\ast)^2, \R (C_t^\ast)^\circ\big)$ does not depend on $t$ either. The vanishing of the polynomials of $\R H^0(\Xd,\Li_{\Delta_a})$ at each of the node of $C^\ast_t$ impose independent conditions, cutting $\R H^0(\Xd,\Li_{\Delta_a})$ into $2^{g_\Li}$ orthant. Only the orthant corresponding to the sign prescription above leads to deformation into $\Hd$. It induces a continuous family of orthant in $t$ inside of which $\left\lbrace C^\ast_t \right\rbrace_t$ can be continuously deformed to a path in $\Hd$ with endpoints on $\left\lbrace C_{0,s} \right\rbrace_s$ and $\left\lbrace C_{1,s} \right\rbrace_s$. The result follows.
\end{proof}

\begin{Remark} We believe that the space of Harnack curves for any smooth toric surface admits a parametrization similar to the one given in \cite{KO} for non degenerate projective curves. In particular, we expect this set to be contractible.
\end{Remark}

We now show that $\mathbf{H}_\Delta:=\left\lbrace \big((z,w),C\big) \in \ttor \times \Hd, \, \vert \,  (z,w)\in C \right\rbrace$ is a trivial fibration over $\Hd$. For this purpose, we will again make use of the Ronkin function. It is shown in \cite{PR} that for $C\in\LL$ and $f$ a defining polynomial the map $(grad \, N_f) \circ \A : C^\circ \rightarrow \R^2$ takes values in  $\Delta$. In the Harnack case, one has the following stronger statement.

\begin{Lemma}\label{lem:difdelta} 
For a curve $C \in \Hd$ with a defining polynomial $f$, the map $(grad \, N_f) :  \itr\big(\A (C^\circ)\big)  \rightarrow \Delta$ is a diffeomorphism onto $ \itr(\Delta \setminus \Z^2)$.
\end{Lemma}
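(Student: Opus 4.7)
The strategy is to show that $\Phi := (grad \, N_f)|_U$, where $U := \itr(\A(C^\circ))$, is a proper local diffeomorphism onto $V := \itr(\Delta) \setminus \Z^2$, and then apply a change-of-variables argument to force its geometric degree to be $1$.

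First, I would invoke the regularity of the Ronkin function due to Passare--Rullg\aa rd: $N_f$ is $C^1$ on $\R^2$ and real-analytic on $U$, and $grad \, N_f$ is constant on each connected component of $\R^2 \setminus \A(C^\circ)$, equal to the lattice point determined by the order map. Since $C$ is simple Harnack, the order map is a bijection from $\pi_0(\R^2 \setminus \A(C^\circ))$ onto $\Delta \cap \Z^2$, sending bounded (resp.\ unbounded) components to $\itr(\Delta) \cap \Z^2$ (resp.\ $\partial \Delta \cap \Z^2$). The continuous extension of $grad \, N_f$ therefore takes values on $\partial U$ in $\partial \Delta \cup (\itr(\Delta) \cap \Z^2)$, which is disjoint from $V$. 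For the strict convexity of $N_f$ on $U$, I would use the classical sum-over-sheets expression of $\mathrm{Hess}\, N_f$ in terms of the amoeba map $\A \colon C^\circ \to \A(C^\circ)$: each sheet contributes a positive semi-definite term, and on a simple Harnack curve $\A$ is $2$-to-$1$ over $U$ with the two sheets exchanged by complex conjugation; their combined contribution is strictly positive definite. Hence $\Phi$ is a local diffeomorphism.

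Next, I would show that $\Phi \colon U \to V$ is proper. Any sequence in $\Phi^{-1}(K)$, for $K \subset V$ compact, with no subsequential limit in $U$ must accumulate either on $\partial U$ or along a tentacle end of the amoeba; in both cases the continuous extension of $grad \, N_f$ sends the accumulation point into $\partial \Delta \cup \Z^2$, disjoint from $K$, a contradiction. A proper local diffeomorphism onto the connected $2$-manifold $V$ is a covering map, so $\Phi$ has a well-defined degree $k$. Combining with the Passare--Rullg\aa rd identity $\int_U \det(\mathrm{Hess}\, N_f)\, dA = \mathrm{Area}(\Delta)$ and the change-of-variables formula for a $k$-sheeted covering yields $k \cdot \mathrm{Area}(V) = \mathrm{Area}(\Delta) = \mathrm{Area}(V)$, forcing $k = 1$ and $\Phi$ to be a diffeomorphism.

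The main obstacle lies in the properness step together with the sum-over-sheets expression for $\mathrm{Hess}\, N_f$: both rely on making the structure of the amoeba map at the boundary and at the tentacle ends of $U$ sufficiently precise, which is exactly where the Harnack hypothesis, via the extended order map and the $2$-to-$1$ property, plays a crucial role. A secondary subtlety is that $U$ is not convex (it has $g_{\Li}$ holes indexed by $\itr(\Delta) \cap \Z^2$), so strict convexity alone cannot supply injectivity; this is why the global topological covering argument is needed to replace the naive ``strictly convex implies injective gradient'' reasoning.
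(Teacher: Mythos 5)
Your proposal is correct, but it replaces the paper's argument for bijectivity with a genuinely different one, so a comparison is warranted.

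Both you and the paper start from the same facts: the Hessian of $N_f$ is positive definite on $U=\itr(\A(C^\circ))$ (the paper cites \cite[Theorem 7]{PR}; you re-derive it via the sum-over-sheets formula together with the $2$-to-$1$ Harnack property), so $\Phi = grad\, N_f|_U$ is a local diffeomorphism, and the extended gradient sends $\partial U$ and the tentacle ends into $(\Delta\cap\Z^2)\cup\partial\Delta$. From there the two proofs diverge. The paper proves injectivity in two lines from the \emph{global} convexity of $N_f$ on $\R^2$: if two points share a gradient, $N_f$ is affine on the segment joining them, which forces that segment outside the strict-convexity region $U$. Surjectivity is then a separate sequential argument using $\overline{\im(grad\,N_f)}=\Delta$ from \cite[Theorem 4]{PR} together with the boundary analysis. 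You instead package the whole bijectivity into a properness $\Rightarrow$ covering $\Rightarrow$ degree argument, pinning the degree to $1$ by the Monge--Amp\`ere mass identity. Your properness step is essentially the paper's surjectivity argument in disguise, so the extra cost you pay is the covering-map machinery and the volume identity, while you save the appeal to global convexity of $N_f$. The paper's route is shorter and more elementary; yours is more robust (it quantifies why the degree is $1$ rather than merely verifying bijectivity) and works verbatim in situations where one has a mass identity but no direct convexity argument.

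One small point you should make explicit: the Passare--Rullg\aa rd theorem gives that the \emph{total} Monge--Amp\`ere mass of $N_f$ equals $\mathrm{Area}(\Delta)$. To write this mass as $\int_U \det(\mathrm{Hess}\,N_f)\,dA$, you need to rule out singular mass concentrated on $\partial U$ (and escaping to infinity along the tentacles). This follows, for a $C^1$ convex function, from the fact that $grad\,N_f(\partial U)\subset \Delta\cap\Z^2$ and the tentacle limits lie in $\partial\Delta$, both Lebesgue-null; since you already established exactly this boundary behavior for the properness step, the argument closes, but it should be said.
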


\begin{proof} This lemma follows from \cite[Lemma 4.4]{Kri}. We choose to give an alternative proof here. 

First, it is shown in \cite[Theorem 7]{PR} that the Hessian of $N_f$ is symmetric positive definite on $\itr\big(\A (C^\circ)\big)$, implying both that $grad \, N_f$ is a local diffeomorphism and that $N_f$ is strictly convex on $\itr\big(\A (C^\circ)\big)$. Let us show that $grad \, N_f$ is injective. Suppose there are two points $p, \, q \in \R^2$ such that $grad \, N_f \, (p) = grad \, N_f \, (q)$. By the convexity of $N_f$, it implies that $N_f$ is affine linear on the segment joining $p$ to $q$. By the above strict convexity, both $p$ and $q$ sit outside $\itr\big(\A (C^\circ)\big)$, which proves injectivity.

Now, it is shown in \cite[Theorem 4]{PR} that $\overline{\im(grad \, N_f)}= \Delta$. Suppose there is a point $p\in \itr(\Delta \setminus \Z^2)$ not in $\im(grad \, N_f)$ and consider a sequence $\left\lbrace p_n\right\rbrace_{n\in\N} \subset \im(grad \, N_f)$ converging to $p$. All the limit points of $\left\lbrace (grad \, N_f)^{-1}(p_n)\right\rbrace_{n\in\N}$ are either in $\partial \A (C^\circ)$ or escapes to infinity along  the tentacles of $\A (C^\circ)$. It implies that $p_n$ converges either to $ \Delta \cap \Z^2$ or to $\partial \Delta$. This leads to a contradiction. The surjectivity follows.
\end{proof}

We now show that this map provides a trivialisation the fibration $\mathbf{H}_\Delta \rightarrow \Hd$ via the following construction. First, consider the real oriented blow-up $\tilde{\Delta}$ of $\itr( \Delta ) \cup (\Delta \cap \Z^2)$ at its integer points. This operation replaces the neighbourhood of any integer point $p$ by $\left\lbrace x \in \R^2 \vert \,  \Vert x \Vert \geq 1 \right\rbrace$ or $\left\lbrace x \in \R\times \R_{>0} \vert \, \Vert x \Vert \geq 1 \right\rbrace$ depending whether $p$ sits in $\itr( \Delta)$ or not. Now, consider a copy $\tilde{\Delta}'$ of $\tilde{\Delta}$ given with opposite orientation and define
\[ C^\circ_\Delta := \tilde{\Delta}' \cup \tilde{\Delta} / \sim \]
where $\sim$ is the identification of the boundary of $\tilde{\Delta}'$ to the one of $\tilde{\Delta}$. It follows that $ C^\circ_\Delta$ is an oriented surface of genus $g_\Li$ with $b_\Li$ points removed. Denote by $ pr : C^\circ_\Delta \rightarrow \tilde{\Delta}$ the 2-to-1 projection and by $ conj :  C^\circ_\Delta \rightarrow C^\circ_\Delta$ the associated Deck transformation. By Lemma \ref{lem:difdelta}, we can define an  orientation preserving and equivariant map $R_C : (C \setminus \R C)^\circ \rightarrow C^\circ_\Delta$ such that $ pr \circ R_C = (grad \, N_f) \circ \A $. The map $R_C$ is unique.

\begin{Proposition}\label{prop:triv} 
The map $R_C$ extends to a diffeomorphism $R_C : C^\circ \rightarrow C^\circ_\Delta$. Moreover, the map $R:= (R_C,\, \id) : \mathbf{H}_\Delta \rightarrow C^\circ_\Delta \times \Hd$ is a trivialisation of the fibration $\mathbf{H}_\Delta \rightarrow \Hd$.
\end{Proposition}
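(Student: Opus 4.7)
The plan is to extend the map $R_C$, currently defined on $(C\setminus \R C)^\circ$, across both the real locus $\R C^\circ$ and the $b_\Li$ punctures coming from $C \cap (\Xd \setminus \ttor)$, obtaining a diffeomorphism $R_C : C^\circ \to C^\circ_\Delta$; then to check that every step varies smoothly in $C \in \Hd$ to produce the trivialisation $R$. The two extension problems match exactly the two loci along which $C^\circ_\Delta$ was built from $\tilde\Delta \sqcup \tilde\Delta'$: the gluing boundary (circles around interior lattice points of $\Delta$ and the outer edge) and the punctures (one per boundary lattice point).

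For the extension across $\R C^\circ$, I would use that for a simple Harnack curve the amoeba map $\A : C^\circ \to \A(C^\circ)$ is a branched double cover with $\R C^\circ$ mapping bijectively onto $\partial \A(C^\circ)$: the unbounded component of $\R C^\circ$ onto the outer boundary, and each of the $g_\Li$ ovals in $(\R^\ast)^2$ onto one of the hole boundaries. By Lemma \ref{lem:difdelta}, $grad\, N_f$ sends $\itr(\A(C^\circ))$ diffeomorphically onto $\itr(\Delta\setminus\Z^2)$, and a limit argument using the order map at adjacent complementary components shows that it extends continuously on each hole boundary onto the circle added by the real blow-up at the corresponding interior lattice point, and on the outer boundary onto $\partial\Delta$ minus the boundary lattice points. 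These images coincide with the gluing locus of $C^\circ_\Delta$. In a local holomorphic chart of $C$ at a real point, $\A$ has a standard square-root branching form, and a direct computation shows that $(grad\, N_f)\circ\A$ extends smoothly across the branch locus, yielding a smooth extension of $R_C$ to $\R C^\circ$.

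For the punctures, each intersection of $C$ with a toric divisor $D_\epsilon$ contributes a tentacle of $\A(C^\circ)$ escaping in the direction $-u_\epsilon$, and transversality together with the bijection between complementary components of $\A(C^\circ)$ and $\Delta \cap \Z^2$ identifies the $l_\epsilon+1$ tentacles in that direction with the $l_\epsilon+1$ lattice points on the edge of $\Delta$ normal to $u_\epsilon$. Each such tentacle is sent by $grad\, N_f$ into a half-annular neighbourhood of a boundary lattice point of $\tilde\Delta$, which becomes a punctured disk when doubled in $C^\circ_\Delta$. Thus $R_C$ extends smoothly across each puncture of $C^\circ$ to a puncture of $C^\circ_\Delta$. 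Being now a smooth local diffeomorphism between oriented surfaces of matching genus $g_\Li$ and the same number $b_\Li$ of punctures, bijective on each stratum by construction, $R_C$ is a global diffeomorphism.

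For the trivialisation, I would pick locally on $\Hd$ a smooth section $C \mapsto f_C$ of the projection $H^0(\Xd,\Li)\setminus\{0\} \to \LL$; the Ronkin function $N_{f_C}$ and its gradient then depend smoothly on $C$, while $\A$ is independent of the section, so $R_C$ is a smooth family of diffeomorphisms and $R$ is a smooth bundle isomorphism. The main obstacle is the smoothness of $R_C$ across $\R C^\circ$: the Ronkin function is only piecewise smooth across $\partial \A(C^\circ)$, and the required smoothness of $R_C$ reflects the geometric fact that the sheet-swapping branching of $\A$ along $\R C^\circ$ precisely cancels this non-smoothness. Verifying this cancellation via an explicit local model near a real point is the one step where routine soft arguments do not suffice.
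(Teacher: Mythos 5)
The proposal correctly identifies the geometry of the problem — $\A$ is two-to-one with $\R C^\circ$ as fold locus, $\tilde\Delta$ and $\tilde\Delta'$ are the images of the two halves, the bottleneck is smoothness of $R_C$ across $\R C^\circ$, and the trivialisation should follow from smooth dependence on the defining polynomial. But you flag the decisive step yourself (``the one step where routine soft arguments do not suffice'') and never supply it, and it is precisely there that the paper's proof contains an idea your sketch lacks. The paper does \emph{not} attempt a direct computation of $(grad\, N_f)\circ\A$ in a holomorphic chart at a real point. Instead it uses the Hessian formula \cite[Equation (19)]{PR} to identify the restriction of $(grad\, N_f)\circ\A$ to a half $C^+$ of $C^\circ$ with a lift of the argument map $Arg : C^\circ \to (S^1)^2$ to the universal cover, after a rotation by $\pi/2$. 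It then parametrises $C$ near a real point not by a holomorphic coordinate but by the logarithmic Gauss map $\gamma$ (which is regular on $\R C^\circ$ by \cite[Corollary 6]{Mikh}), and shows by an explicit power-series computation that $Arg$, hence $R_C$, extends to the real oriented blow-up, with the extension on $\R C^\circ$ given exactly by the Gauss map $\gamma^+$ and with $\partial_\beta R_C$ normal to $\partial\tilde\Delta$. This is what makes the extension $C^1$ and equivariant, and hence a diffeomorphism.

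Without this translation to $Arg$ and $\gamma$, the ``direct computation in a local holomorphic chart'' you invoke is not routine: the Ronkin gradient is only piecewise smooth and degenerates on $\partial\A(C^\circ)$, and the composition with the fold singularity of $\A$ has no standard local model you can simply cite. The same gap propagates into your trivialisation argument: you assert that $N_{f_C}$ and its gradient vary smoothly, but $R_C$ is built from the \emph{extension} across $\R C^\circ$, and smooth dependence of that extension on $C$ requires knowing what the extension is. The paper gets this for free from the identification $R_C|_{\R C^\circ} = \gamma^+$ and $R_C|_{C^\circ\setminus\R C^\circ}$ a lift of $Arg$, both of which depend analytically on the coefficients of $f$. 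Concretely: you are missing the bridge from the Ronkin-function description of $R_C$ to the argument-map/Gauss-map description, and that bridge is the content of the proof.
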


Before getting into the proof, recall that the logarithmic Gauss map $\gamma : C^\circ \rightarrow \cp{1}$ is is the composition of any branch of the complex logarithm with the standard Gauss map. In coordinates $(z,w) \in (\C^\ast)^2$, 
\[ \gamma(z,w)= \left[z \partial_z f(z,w):  w \partial_w f(z,w)\right] \]
where $f$ is a defining polynomial for $C$.

\begin{proof}
For the first part of the statement, we show that $R_C$ extends to a diffeomorphism on one half of the curve $C^\circ$ and use the equivariance to conclude. 

Consider the affine chart $\left[u:v\right]\rightarrow u/v$ of $\mathbb{CP}^1$ and define the half $C^+$ of $C^\circ$ to be the closure of $\left\lbrace q \in C^\circ \, \vert \,  \mathfrak{Im}  \big( \gamma(q) \big) \geq 0 \right\rbrace$. Notice that it induces an orientation on $\R C^\circ$ and a lift $\gamma^+ : \R C^\circ \rightarrow S^1$ of $\gamma$. By the computation of the Hessian of $N_f$ in \cite[Equation (19)]{PR}, we deduce that the restriction to $C^+$ of $(grad \, N_f) \circ \A$ is a lift of the argument map $Arg : C^\circ \rightarrow (S^1)^2$ to its universal covering $\R^2$ after a rotation of $\pi/2$. Now let $p\in\R C^\circ$. Up to permutation of the coordinates and sign change, we can assume that $\gamma(p)=\left[\alpha:1\right]$ and that $p$ sits in the positive quadrant. We prove below that the restriction of $Arg$ to a neighbourhood of $p$ in $C^+$ lifts to the real oriented blow up of $(S^1)^2$ at $(0,0)$.

By \cite[Corollary 6]{Mikh}, the logarithmic Gauss map $\gamma$ has no critical point on $\R C^\circ$. Then $\gamma$ is 1-to-1 around $p$ and can be used as a local coordinate for $C$ around $p$. The local parametrization $\Log \circ \gamma^{-1} : t:=u/v \mapsto (\mathbf{z}(t),\mathbf{w}(t))$ of $\Log(C)$ around $\Log(p)$ satisfies
\[ - \mathbf{w}'(t) / \mathbf{z}'(t) = t \]
where the left-hand side is the map $\gamma$ seen via the affine chart of $\mathbb{CP}^1$ chosen above. It follows that 
\[  \mathbf{z} (t) = a_\mathbf{z} + \sum_{j\geq 1} a_j t^j  \; \text{ and } \; \mathbf{w} (t) = a_\mathbf{w} -\sum_{j\geq 1} \frac{j}{j+1}a_j t^{j+1} \]
where $a_\mathbf{z}, \, a_\mathbf{w}, \, a_j\in \R$. Recall that $Arg$ is the projection on $i\R^2$ in logarithmic coordinates. Hence, in the coordinate $t$, one has
\[ Arg(t) = \big(\mathfrak{Im} (\mathbf{z} (t)), \mathfrak{Im} (\mathbf{w} (t))\big) = \Big( \sum_{j\geq 1} a_j \mathfrak{Im}(t^j) ,  - \sum_{j\geq 1} \frac{j}{j+1}a_j \mathfrak{Im}(t^{j+1}) \Big).   \]
Now, let us denote $t=\alpha +i \beta$. A direct computation shows that 
\[ \lim_{\beta \rightarrow 0} \alpha \frac{\mathfrak{Im}(t^j)}{\beta} = \lim_{\beta \rightarrow 0} \frac{j}{j+1} \frac{\mathfrak{Im}(t^{j+1})}{\beta} = j \alpha^j \] 
providing the following explicit formula 
\[ \lim_{\beta \rightarrow 0^+} \dfrac{Arg(t)}{\Vert Arg(t) \Vert} = \dfrac{\mathbf{z}'(\alpha)}{\vert \mathbf{z}'(\alpha) \vert} \dfrac{(1,-\alpha)}{\Vert (1,-\alpha) \Vert}\]
for the extension of $Arg$ to the real oriented blow-up at $(0,0)$. The same computation shows that $\partial_\beta Arg (\alpha )= \mathbf{z}'(\alpha) (1,-\alpha)$.

Applying a rotation of $\pi/2$, we deduce that $R_C$ extends to $C^+$ and that its restriction to $\R C^\circ$ is given by $\gamma^+$. Together with Lemma \ref{lem:difdelta}, It implies that $R_C$ is differentiable and then induces a diffeomorphism from $C^+$ to $\tilde{\Delta}$.
From the equality $\partial_\beta Arg (\alpha )= \mathbf{z}'(\alpha) (1,-\alpha)$, we deduce moreover that the partial derivative $\partial_\beta R_C$ at any point of $\R C^\circ$ is normal to $\partial \tilde{\Delta}$. Hence, it extends to an equivariant diffeomorphism  on the whole $C^\circ$.

From the computation led above and lemma \ref{lem:difdelta}, we deduces that the map $R:= (R_C,\, \id) : \mathbf{H}_\Delta \rightarrow C^\circ_\Delta \times \Hd$ is a differentiable map, and therefore a trivialisation of the fibration $\mathbf{H}_\Delta \rightarrow \Hd$. Indeed, $R_C$ is given by the lift of the argument map to its universal covering on $C^\circ \setminus \R C^\circ$ and extended by the logarithmic Gauss map on $\R C^\circ$. Both maps depend analytically on the coefficients of the defining polynomial $f \in \mathbf{H}_\Delta $.

\end{proof}

We define a primitive integer segment to be a segment in $\Delta$ that joins two integer points and is of integer length 1. For a primitive integer segment $\sigma \subset \Delta$, denote by $\tilde{\sigma}$ its lift in $\tilde{\Delta}$.
It follows from the above proposition that for any primitive integer segment $\sigma \subset \Delta$, $(pr \circ R_C)^{-1}(\tilde{\sigma}) \subset C$ is a loop invariant by complex conjugation. Similarly, denote by $\tilde{v} \subset \tilde{\Delta}$ the boundary circle projecting down to $v \in \da \cap \Z^2$. Then $(pr \circ R_C)^{-1}(\tilde{v}) \subset \R C$ is one of the A-cycles of $C$.

\begin{Corollary}\label{cor:triv}
Let $C_0, \, C_1 \in \Hd$, $f: C_0 \rightarrow C_1$ be the diffeomorphism induced by the trivialisation $R$, $\sigma \subset \Delta$ a primitive integer segment and $v \in \da \cap \Z^2$. Then, the pullback by $f$ of the Dehn twist along the loop $(pr \circ R_{C_1})^{-1}(\tilde{\sigma})$ (respectively $(pr \circ R_{C_1})^{-1}(\tilde{v})$) in $C_1$ is the Dehn twist along the loop $(pr \circ R_{C_0})^{-1}(\tilde{\sigma})$ (respectively $(pr \circ R_{C_0})^{-1}(\tilde{v})$)  in $C_0$.\qed
\end{Corollary}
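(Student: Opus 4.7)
The plan is to derive the corollary as an essentially immediate consequence of Proposition \ref{prop:triv}, by carefully unwinding what the trivialisation $R$ does to loops of the prescribed form. First I would fix notation: the diffeomorphism $f : C_0 \to C_1$ induced by the trivialisation $R = (R_C, \id)$ is, by definition, $f = R_{C_1}^{-1} \circ R_{C_0}$, i.e.\ the unique map making $R_{C_1}\circ f = R_{C_0}$ on the fibers over $C_0$ and $C_1$.

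Next I would observe that the loops $pr^{-1}(\tilde\sigma)$ and $pr^{-1}(\tilde v)$ are honest simple closed curves living in the fixed reference surface $C^\circ_\Delta$, independent of the choice of Harnack curve: the former because $\sigma$ is a primitive integer segment lifting uniquely to $\tilde\Delta$ and then to a loop in the doubled surface $C^\circ_\Delta = \tilde\Delta' \cup \tilde\Delta/\!\sim$; the latter because $\tilde v$ is the boundary circle inserted at an interior lattice point by the real oriented blow-up, and this circle lies entirely in one half, meeting $\partial\tilde\Delta$ only when $v\in\partial \Delta_a$ (which is excluded if $v$ is understood as an interior lattice point of $\Delta$). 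In either case we have the tautology
\[
(pr\circ R_{C_i})^{-1}(\tilde\sigma) \;=\; R_{C_i}^{-1}\bigl(pr^{-1}(\tilde\sigma)\bigr),
\]
and the analogous equality with $\tilde v$.

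From this, the compatibility with $f$ follows immediately: applying $f = R_{C_1}^{-1}\circ R_{C_0}$ to the loop $R_{C_0}^{-1}(pr^{-1}(\tilde\sigma)) \subset C_0$ gives $R_{C_1}^{-1}(pr^{-1}(\tilde\sigma)) \subset C_1$, so $f$ sends $(pr\circ R_{C_0})^{-1}(\tilde\sigma)$ to $(pr\circ R_{C_1})^{-1}(\tilde\sigma)$ as isotopy classes, and similarly for $\tilde v$. Combining this with the naturality of the Dehn twist, $f^{-1}\tau_\gamma f = \tau_{f^{-1}(\gamma)}$ for any diffeomorphism $f$ and simple closed curve $\gamma$, yields the stated equalities of mapping classes.

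There is no real obstacle in this argument; the substance of the corollary is entirely contained in Proposition \ref{prop:triv}, which provides the global trivialisation and the smooth dependence of $R_C$ on $C$. The only minor care needed is in checking that $pr^{-1}(\tilde\sigma)$ and $pr^{-1}(\tilde v)$ are well-defined simple closed curves in $C^\circ_\Delta$, so that their images under the family of diffeomorphisms $R_C^{-1}$ give a smooth family of simple closed curves in the total space $\mathbf{H}_\Delta$, along which the Dehn twists are canonically identified.
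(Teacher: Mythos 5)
Your proposal is correct and follows exactly the route the paper has in mind: the corollary carries a \qed in its statement precisely because it is an immediate unwinding of Proposition \ref{prop:triv}, and you carry out that unwinding correctly — identify $f=R_{C_1}^{-1}\circ R_{C_0}$, note that $pr^{-1}(\tilde\sigma)$ and $pr^{-1}(\tilde v)$ are fixed model loops in $C^\circ_\Delta$, and then invoke naturality of Dehn twists under conjugation by a diffeomorphism. The only imprecision is cosmetic: the circle $\tilde v$ does not ``lie in one half'' of $C^\circ_\Delta$ — it is a boundary component of $\tilde\Delta$ (the exceptional circle of the real oriented blow-up at $v$), hence lies on the seam and is fixed by the deck transformation $conj$; this is consistent with $\delta_v=(pr\circ R_C)^{-1}(\tilde v)$ being a real $A$-cycle, and it does not affect your conclusion.
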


\begin{Definition}\label{def:segtwist}
For any primitive integer segment $\sigma \subset \Delta$ and any curve $C\in \Hd$, define the loop $\delta_\sigma := (pr \circ R_{C})^{-1}(\tilde{\sigma}) \subset C$ and $\tau_\sigma \in \MCG(C)$ to be the Dehn twist along  $\delta_\sigma$.\\
For any $v \in \da \cap \Z^2$, define the A-cycle $\delta_v := (pr \circ R_{C})^{-1}(\tilde{v}) \subset \R C$ and $\tau_v \in \MCG(C)$ to be the Dehn twist along  $\delta_v$.
\end{Definition}

\begin{Remark}
$\delta_v$ and $\delta_\sigma$ intersect if and only if $v$ is an end point of $\sigma$. In this case, $\delta_v$ intersects $\delta_\sigma$ transversally at one point.
\end{Remark}

\section{Tropical curves}\label{sec:tropi}

In this section, we recall some definitions on tropical and phase-tropical curves, following \cite{IMS} and \cite{L}. For the sake of pragmatism and simplicity, some definitions are given in a restrictive context. The expert reader should not be disturbed. The main goal of this section is the statement and proof of Theorem \ref{thm:rea}, as a corollary of Mikhalkin's approximation Theorem, see \cite[Theorem 5]{L}. Namely, we construct explicit elements in the image of the monodromy map $\mu$ by approximating well chosen loops in the relevant moduli space of phase-tropical curves.

A tropical polynomial in two variables $x$ and $y$ is a function
\begin{equation}\label{eq:troppol}
f(x,y) = ``\sum_{(\alpha, \beta) \in A} c_{\alpha, \beta} x^\alpha y^\beta" = \max_{(\alpha, \beta) \in A} (c_{\alpha, \beta} +x \alpha + y \beta)
\end{equation}
where $A \subset \N^2$ is a finite set. 

\begin{Definition}
The tropical zero set of a tropical polynomial $f(x,y)$ is defined by 
\[ \Gamma_f := \left\lbrace (x,y) \in \R^2 \, \vert \, \text{f is not smooth at } (x,y) \right\rbrace \]
A subset $\Gamma \subset \R^2$ is a tropical curve if $\Gamma = \Gamma_f$ for some tropical polynomial $f$. We denote  by $E(\Gamma)$ (respectively $V(\Gamma)$) the set of its bounded edges (respectively of its vertices).
\end{Definition}

It follows from the definition that a tropical curve $\Gamma \subset \R^2$ is a piecewise linear graph with rational slopes. The Newton polygon of a tropical polynomial $f$ is defined as the convex hull of its support $A$. The Newton polygon of a tropical curve is only defined up to translation as the multiplication of tropical polynomial by a tropical monomial does not affect its zero set. Tropical curves are intimately related to subdivision on their Newton polygon. For a finite set $A \subset \Delta\cap \N^2$, denote by $\Delta$ the lattice polygon $\Delta:=conv(A)$. For any function $h : A \rightarrow \R$, define
\[ \Delta_h := conv \left\lbrace \big( (\alpha, \beta) , t \big) \in \mathbb{R}^3  \: \vert \:  (\alpha, \beta) \in A, \: t \geq h (\alpha, \beta) \right\rbrace, \] 
$i.e.$ $\Delta_h$ is the convex hull of the epigraph of $h$. The bounded faces of $\Delta_h$ define a piecewise-linear convex function $\nu_h : \Delta \rightarrow \R$. Finally, define $S_h$ to be the subdivision $\Delta = \Delta_1 \cup \Delta_2 \cup \, ... \,  \cup \Delta_N $ given by the domains of linearity $\Delta_i$ of $\nu_h$. The subdivision $S_h$ is a union of 0-, 1- and 2-cells. For practical matters, we will often consider $S_h$ as a graph. The set of its vertices is $V(S_h)= \cup_i (\partial \Delta_i \cap \N^2)$ and the set of its edges $E(S_h)$ is the union over $i$ of all the primitive integer segments contained in $\partial \Delta_i$.

\begin{Definition}\label{def:subdiv}
A convex subdivision $S$ of $\Delta$ is a graph $S=S_h$ for some \linebreak $h : A \rightarrow \R$. The subdivision $S$ is unimodular if any connected component of $\Delta \setminus S$ has Euclidean area 1/2. 
\end{Definition}

Notice that by Pick's formula, a subdivision is unimodular if it decomposes $\Delta$ in triangles, whose vertices generates the lattice $\Z^2$.\\
Now any tropical polynomial $f$ as in (\ref{eq:troppol}) can be considered as the function $(\alpha, \beta) \mapsto c_{\alpha, \beta}$ on $A$, and hence induces a convex subdivision $S_f$ on its Newton polygon $\Delta$. One has the following duality, see \cite{IMS}.

\begin{Proposition}
Let $f(x,y)$ be a tropical polynomial. The subdivision $(\R^2,\Gamma_f)$ is dual to the subdivision $(\Delta,S_f)$ in the following sense
\begin{enumerate}
\item[$\ast$] $i$-cells of $(\R^2,\Gamma_f)$ are in 1-to-1 correspondence with $(2-i)$-cells of $(\Delta,S_f)$, and the linear spans of corresponding cells are orthogonal to each others.
\item[$\ast$] The latter correspondence reverses the incidence relations.
\end{enumerate}
For any edge $e\in E(\Gamma_f)$, denote its dual edge by $e^\vee \in E(S_f)$. Define the order of any connected component of $\R^2 \setminus \Gamma_f$ to be its corresponding lattice point in $\Delta$.
\end{Proposition}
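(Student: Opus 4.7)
The plan is to exploit the fact that the Legendre transform of the piecewise linear convex function $\nu_h$ (whose graph is the lower boundary of $\Delta_h$) is exactly the tropical polynomial $f$, so both subdivisions $(\mathbb{R}^2,\Gamma_f)$ and $(\Delta,S_f)$ are two sides of the same Legendre coin. More concretely, for each $(x,y)\in\mathbb{R}^2$ define
\[
M(x,y):=\{(\alpha,\beta)\in A\mid c_{\alpha,\beta}+x\alpha+y\beta=f(x,y)\}.
\]
The stratification of $\mathbb{R}^2$ by the combinatorial type of $M(x,y)$ is exactly the stratification by cells of $(\mathbb{R}^2,\Gamma_f)$: a $2$-cell (connected component of $\mathbb{R}^2\setminus\Gamma_f$) is where $M$ is a single point, an edge of $\Gamma_f$ is where $|M|=2$, and a vertex of $\Gamma_f$ is where $|M|\geq 3$. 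I would first verify this claim by the standard argument that $f$ is smooth precisely where a single affine function dominates strictly.

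Next, I would define the correspondence: send a stratum on which $M\equiv M_0$ to the convex hull $\mathrm{conv}(M_0)\subset\Delta$. I claim this is exactly a cell of $S_f=S_h$ with $h(\alpha,\beta)=-c_{\alpha,\beta}$ (up to sign conventions). Indeed, $M_0$ is the set of indices whose lifted points $((\alpha,\beta),-c_{\alpha,\beta})$ lie on a common bounded face of $\Delta_h$, by the Legendre/upper envelope description of the subdivision. This produces the bijection between $i$-cells of $(\mathbb{R}^2,\Gamma_f)$ and $(2-i)$-cells of $(\Delta,S_f)$.

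For orthogonality, consider an edge $e\subset\Gamma_f$ with $M\equiv\{(\alpha,\beta),(\alpha',\beta')\}$; then $e$ lies in the line $(\alpha-\alpha')x+(\beta-\beta')y=c_{\alpha',\beta'}-c_{\alpha,\beta}$, whose direction is orthogonal to $(\alpha,\beta)-(\alpha',\beta')$, the direction of $e^\vee$. The case of a vertex $v\in V(\Gamma_f)$ dual to a $2$-cell of $S_f$ is trivial (both spans are $0$ and $\mathbb{R}^2$), and a $2$-cell of $\Gamma_f$ is dual to a vertex, also trivial. For the reversal of incidences, note that if a stratum $\sigma$ of $(\mathbb{R}^2,\Gamma_f)$ lies in the closure of a stratum $\sigma'$, then $M$ on $\sigma'$ is a subset of $M$ on $\sigma$ (since limits of maximizers are maximizers), whence $\mathrm{conv}(M_{\sigma'})\subset\mathrm{conv}(M_\sigma)$, i.e.\ $(\sigma')^\vee\subset\sigma^\vee$.

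The only genuine step is checking that the convex hulls of the sets $M_0$ are exactly the faces of the subdivision $S_f$; everything else is a clean unwinding of definitions. I would spend most of the write-up on that identification, after which orthogonality and incidence reversal fall out immediately from the linear-algebra observation above.
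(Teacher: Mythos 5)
The paper gives no proof of this Proposition; it cites \cite{IMS} directly, so there is no internal argument to compare against. Your proposal is the standard Legendre-duality argument and is essentially correct. One imprecision: edges of $\Gamma_f$ are not characterized by $|M|=2$, nor vertices by $|M|\geq 3$. If $A$ contains collinear triples this fails --- for instance $f=\max(0,x,2x,y)$ has $M=\{(0,0),(1,0),(2,0)\}$ along the ray $\{x=0,\ y<0\}$, which is an edge of $\Gamma_f$, not a vertex. The correct invariant is $\dim\mathrm{conv}(M(x,y))$: an $i$-cell of $(\R^2,\Gamma_f)$ is precisely a stratum where $\dim\mathrm{conv}(M)=2-i$. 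Since the rest of your argument runs through $\mathrm{conv}(M_\sigma)$ rather than through $|M_\sigma|$, nothing downstream changes; you should simply phrase the initial stratification in terms of $\dim\mathrm{conv}(M)$. Your remark that the dual subdivision is obtained from the lower hull of $(\alpha,\beta,-c_{\alpha,\beta})$ rather than $(\alpha,\beta,c_{\alpha,\beta})$ is also correct and worth keeping explicit: with the $\max$-plus convention used for $f$, the set of maximizers at $(x,y)$ is exactly the set of minimizers of $-c_{\alpha,\beta}-\alpha x-\beta y$, hence the projection of a lower face of the lift by $-c$, so taking $S_f=S_{-c}$ removes an ambiguity in the paper's phrase ``consider $f$ as the function $(\alpha,\beta)\mapsto c_{\alpha,\beta}$''.
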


\begin{Remark}\label{rem:popdec}
For any curve $C \in \Hd$, a unimodular convex subdivision $S$ of $\Delta$ induces a pair of pants decomposition of $C$ by considering the union of the loops $\delta_\sigma$ for all $\sigma \in E(S)$.
\end{Remark}

\begin{Definition}
A tropical curve $\Gamma_f \subset \mathbb{R}^2$ is smooth if its dual subdivision $S_f$ is unimodular.
\end{Definition}

In particular, a smooth tropical curve has only 3-valent vertices. Now, we introduce some material in order to define smooth phase-tropical curves. Consider the family of diffeomorphism on $\ttor$ given by 
\[H_t(z,w) = \left( \vert z \vert^{\frac{1}{\log(t)}} \frac{z}{\vert z \vert} , \vert w \vert^{\frac{1}{\log(t)}} \frac{w}{\vert w \vert} \right)\]
Define the phase-tropical line $L \subset \ttor$ to be the Hausdorff limit of \linebreak$H_t\big( \left\lbrace 1+z+w=0 \right\rbrace\big)$ when t tends to $+\infty$. If we denotes by $\Lambda := \Gamma_{``1+x+y"}$ the tropical line centred at the origin, $L$ is a topological pair of pants such that $\A(L)=\Lambda$. $L$ is given by $\left\lbrace z=0 \right\rbrace$, $\left\lbrace w=0 \right\rbrace$ and $\left\lbrace z=w \right\rbrace$ over the three open rays of $\Lambda$. These cylinders are glued to the coamoeba $Arg\big( \left\lbrace 1+z+w=0 \right\rbrace \big)$ over the vertex of $\Lambda$.\\
A toric transformation $ A : \ttor \rightarrow \ttor $ is a diffeomorphism of the form 
\[ (z,w) \mapsto \big( b_1 z^{a_{11}}w^{a_{12}}, b_2 z^{a_{21}}w^{a_{22}} \big) \]
where $(b_1,b_2) \in  \big( \mathbb{C}^\ast \big)^2 $ and $  \big( a_{ij} \big) \in Sl_{2} (\Z)$.
It descends to an affine linear transformation on $\mathbb{R}^2$ (respectively on $(S^1)^2$) by composition with the projection $\A$ (respectively $Arg$) that we still denote by $A$. Notice that the group of  toric transformations fixing $L$ is generated by $(z,w) \mapsto (w,z)$ and $(z,w)\mapsto(w,(zw)^{-1})$. It descends via $\A$ to the group of symmetries of $\Lambda$.

\begin{Definition}
A smooth phase-tropical curve $V \subset \ttor$ is a topological surface such that :
\begin{enumerate}
\item[$\ast$] its amoeba $ \mathcal{A} (V) $ is a smooth tropical curve $\Gamma \subset \mathbb{R}^2$,
\item[$\ast$] for any open set $U\subset \R^2$ such that $U\cap\Gamma$ is connected and contains exactly one vertex $v$ of $\Gamma$, there exists a toric transformation $A$ such that $V \cap \A^{-1}(U)=A(L)\cap \A^{-1}(U)$.
\end{enumerate}
We will say that the map $A$ is a chart of $V$ at the vertex $v$.
\end{Definition}

\begin{Remark}
It follows from the definition that the topology of $V$ is determined by its underlying tropical curve $\A(V)$. $V$ is an oriented genus $g$ surface with $b$ punctures where $g$ and $b$ are respectively  the genus and the number of infinite rays of $\A(V)$.
\end{Remark}

Smooth \ph curves enjoy a  description very similar to the one of Riemann surfaces in terms of Fenchel-Nielsen coordinates, see \cite{L}. Let $V$ be a smooth \ph curve and $\Gamma := \A(V)$. Up to toric translation, $V$ can be encoded by the pair $(\Gamma, \Theta)$ where $\Theta : E(\Gamma) \rightarrow S^1$ is defined in the following way.
For any $e\in E(\Gamma)$ bounded by $v_1, \, v_2 \in V(\Gamma)$, consider any two charts $A_1, \, A _2 : \ttor \rightarrow \ttor$ at $v_1$ and $v_2$ overlapping on $e$ such that $A_1$ and $A_2$ map $e$ on the same edge of $\Lambda$ and exactly one of the $A_i : \R^2 \rightarrow \R^2$ is orientation preserving. The induced automorphism $A_2\circ A_1^{-1}$ on the cylinder $\left\lbrace w=0\right\rbrace$ is given in coordinates by $z \mapsto \vartheta/z$ for a unique $\vartheta \in \C^\ast$. We can check that the quantity $arg(\vartheta)$ is an intrinsic datum of $V$, see \cite[Proposition 2.36]{L}.

\begin{Definition}
For a \ph curve $V\subset\ttor$ and $\Gamma := \A(V)$, the associated twist function $\Theta : E(\Gamma) \rightarrow S^1$ is defined by $$ \Theta(e):= arg(\vartheta)$$ for any $e\in E(\Gamma)$ and $\vartheta \in \C^\ast$ constructed as above.
\end{Definition}

\begin{Remark}\label{rem:phcurv}
If any smooth \ph curve $V \subset \ttor$ can be described by a pair $(\Gamma, \Theta)$, there are conditions on the twist function $\Theta$ for the pair $(\Gamma, \Theta)$ to correspond to a \ph curve in $\ttor$. These conditions are given by the equations $(5)$ in \cite{L}.
Notice also that any tropical curve $\Gamma$ induces a length function $l$ on $E(\Gamma)$.
The Fenchel-Nielsen coordinates of a \ph curve $(\Gamma, \Theta)$ consist in the pair $(l,\Theta)$, see \cite{L}.
\end{Remark}

Phase tropical Harnack curves are the \ph counterpart of the simple Harnack curves that we reviewed in the previous section.

\begin{Definition}
A \ph Harnack curve is a smooth phase-tropical curve $V \subset \ttor$ invariant by complex conjugation and with associated twist function $\Theta\equiv 1$.
\end{Definition}

It follows from the definition that for a given smooth tropical curve $\Gamma$, there are exactly four \ph Harnack curves $V \subset \ttor$ such that $\A(v)=\Gamma$ and they are all obtained from each others by sign changes of the coordinates. The map $\A$ sends the real part $ \R V$ onto $\Gamma$ in a $2$-to-$1$ so that the topological type of $(\R \Xd, \R V)$ can be recovered from $\Gamma$. If $\Delta$ is the Newton polygon of $\Gamma$, it follows from \cite[Theorem 2]{L2} that the approximation of $\Gamma$ in $\Li_\Delta$ is a simple Harnack curve whose amoeba can be made arbitrarily close to $\Gamma$.

We now introduce some necessary terminology for the statement of Theorem \ref{thm:rea}.

\begin{Definition}
A weighted graph of $\Delta$ is a pair $(G,m)$ where $G$ is a graph whose edges are primitive integer segments of $\Delta$ and $m : E(G) \rightarrow \Z$. A weighted graph $(G,m)$ is balanced if for any vertex $v\in V(G)\cap \itr(\Delta)$
\begin{equation}  
\sum_{\vec{e}\in E(G,v)} m(e)\cdot\vec{e}=0 \label{eq:balance}
\end{equation}
where $E(G,v)\subset E(G)$ is the set of edges adjacent to $v$ oriented outwards. A weighted graph $(G,m)$ is admissible if it is balanced and if it is a subgraph of a unimodular convex subdivision of $\Delta$.
\end{Definition}

\begin{Definition}
For a weighted graph $\G:=(G,m)$ of $\Delta$, and any curve $C_0\in\Hd$, define
$$\tau_\G:=\prod_{\sigma\in E(G)} \tau_\sigma^{m(\sigma)} \in \MCG(C_0)$$
where $\tau_\sigma$ is given in Definition \ref{def:segtwist}.
\end{Definition}

\begin{Theorem}\label{thm:rea}
For any curve $C_0 \in \Hd$ and any admissible graph $\mathcal{G}=(G,m)$ of $\Delta$, one has
\[  \tau_\mathcal{G}  \in \im(\mu). \]
\end{Theorem}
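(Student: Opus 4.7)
The plan is to realise $\tau_\G$ as the monodromy of a loop in $\LL \setminus \D$ obtained by approximating an explicit loop in the moduli space of smooth \ph curves with Newton polygon $\Delta$. Step 1: Since $\G$ is admissible, extend $G$ to a unimodular convex subdivision $S$ of $\Delta$ containing $G$ as a subgraph, and let $\Gamma \subset \R^2$ be the smooth tropical curve dual to $S$. Write $\sigma^\vee \in E(\Gamma)$ for the edge of $\Gamma$ dual to $\sigma \in E(S)$; by duality, bounded edges of $\Gamma$ correspond to primitive segments $\sigma \in E(S)$ interior to $\Delta$, and interior vertices of $\Gamma$ correspond to triangles of $S$ whose common vertex $v^\vee \in V(S)$ lies in $\itr(\Delta)$. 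Pick a \ph Harnack curve $V_0 \subset \ttor$ with $\A(V_0)=\Gamma$ and $\Theta_0 \equiv 1$; by Mikhalkin's approximation theorem $V_0$ is approximated by a simple Harnack curve, which thanks to Theorem~\ref{thm:connected} and Corollary~\ref{cor:triv} we may canonically identify with our reference $C_0 \in \Hd$ through the trivialisation $R$ of Proposition~\ref{prop:triv}.

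Step 2: Define a loop $t \mapsto (\Gamma, \Theta_t)$, $t \in [0,1]$, in the space of smooth \ph curves with $\A = \Gamma$ by
\[ \Theta_t(\sigma^\vee) = e^{2\pi i\, m(\sigma)\, t} \quad \text{for } \sigma \in E(G), \qquad \Theta_t(e) = 1 \quad \text{otherwise}. \]
At each interior vertex $v$ of $\Gamma$, the gluing equations (5) in \cite{L} require a multiplicative identity among the $\Theta_t(e)$ for $e$ adjacent to $v$, whose exponents are the primitive edge vectors of the corresponding triangle of $S$. Under the duality $E(\Gamma) \leftrightarrow E(S)$, these primitive vectors match exactly the oriented edges $\vec{e}$ appearing in the balance equation \eqref{eq:balance}: passing to arguments turns the multiplicative identity into the additive balancing condition, which is granted by admissibility of $\G$. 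Hence $(\Gamma, \Theta_t)$ is a genuine \ph curve for every $t$, and $(\Gamma, \Theta_1) = (\Gamma, \Theta_0)$ since $m(\sigma) \in \Z$.

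Step 3: Apply the 1-parameter version of Mikhalkin's approximation theorem to lift this loop to a path $\gamma : [0,1] \to \LL \setminus \D$ whose endpoints both approximate $V_0$; after concatenating with a short path in $\Hd$ as in Step 1, we may assume $\gamma(0) = \gamma(1) = C_0$. Read in the Fenchel-Nielsen coordinates associated to the pair of pants decomposition of $C_0$ induced by $S$ (Remark~\ref{rem:popdec}), the lift $\gamma$ preserves all lengths and all twists indexed by edges outside $G$, while shifting the twist at $\sigma^\vee$ by a full $2\pi m(\sigma)$ for each $\sigma \in E(G)$. The standard dictionary between $2\pi$-shifts of Fenchel-Nielsen twists and Dehn twists along the corresponding pants curves then yields
\[ \mu(\gamma) = \prod_{\sigma \in E(G)} \tau_\sigma^{m(\sigma)} = \tau_\G. \]

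The core obstacle is Step 2, namely the precise translation of the phase-tropical gluing equations at the interior vertices of $\Gamma$ into the additive balancing condition \eqref{eq:balance} at the interior vertices of $G$, together with the verification that the Dehn twists produced in Step 3 are exactly the twists along the loops $\delta_\sigma$ of Definition~\ref{def:segtwist}; both rely on transporting the identifications provided by Proposition~\ref{prop:triv} and Corollary~\ref{cor:triv} from the approximating Harnack curve back to $C_0$.
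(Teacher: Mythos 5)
Your proposal takes essentially the same approach as the paper: both define a loop $(\Gamma, \Theta_t)$ of smooth phase-tropical curves by rotating the twist at each edge $\sigma^\vee \in E(\Gamma)$ through $2\pi m(\sigma)$, verify that the balancing condition \eqref{eq:balance} is exactly what makes the gluing relations of \cite{L} hold along the loop, lift to a closed path in $\LL \setminus \D$ via the one-parameter Mikhalkin approximation (Theorem 5 and Proposition 4.9 of \cite{L}), and read off the monodromy as $\tau_\G$ through the pair-of-pants decomposition of Remark~\ref{rem:popdec}, the trivialisation of Proposition~\ref{prop:triv}, and the connectedness statement Theorem~\ref{thm:connected}. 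One small inaccuracy in Step~3: the lifted path $\{(l_\theta, \Theta_\theta)\}_\theta \subset U$ does \emph{not} keep the Fenchel--Nielsen lengths constant (the analytic set $U$ has codimension $g_\Li$ inside $\R_{>0}^{E(\Gamma)}\times\{\Theta_\theta\}$ and the lengths vary with $\theta$), but since the path is a closed loop this is immaterial to the computed monodromy.
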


\begin{proof}
Let $S$ be a unimodular convex subdivision containing $G$ and $\Gamma \subset \R^2$ be a tropical curve of dual subdivision $S$. 
Among the four \ph Harnack curves supported on $\Gamma$, consider the one $V$ that is coherent with the sign convention made for 
$\Hd$. 

The admissible graph $\G=(G,m)$ induces the following family \linebreak $\left\lbrace V_\theta :=(\Gamma, \Theta_\theta) \, \vert \,0\leq 
\theta\leq 2 \pi \right\rbrace$ of \ph curves: for any $\epsilon \in E(\Gamma)$ such that $\epsilon^\vee \in E(G)$, define 
$\Theta_\theta(\epsilon) = e^{i\theta m(\epsilon^\vee)}$ and  $\Theta_\theta(\epsilon) = 1$ otherwise. One easily check that 
$\Theta_\theta$ satisfies $(5)$ in \cite{L} for any parameter $\theta$, so that it actually defines a loop of smooth \ph curves in $\ttor$ 
based at $V_0=V_{2\pi}=V$.

We now want to use Mikhalkin's approximation theorem in family (see \cite[Theorem 5]{L}). For this, we 
briefly reproduce a construction that can be found in \cite[Section 4.2]{L}. Consider the Teichm\"{u}ller space $\mathcal{T}(C)$ pointed at 
an arbitrary curve $C \in \Hd$. The pair of pants decomposition given by $\left\lbrace \delta_\sigma \, \vert \, \sigma \in E(S)  
\right\rbrace$ induces Fenchel-Nielsen coordinates on $\mathcal{T}(C)$. The partial quotient $\mathcal{T}(C) \rightarrow 
(\C^\ast)^{E(\Gamma)} \rightarrow \mathcal{M}_{g_\Li,b_\Li}$ by the group generated by the set of Dehn twists $\left\lbrace \tau_\sigma 
\, \vert \, \sigma \in E(S) \right\rbrace$ inherits Fenchel-Nielsen coordinates $\R_{>0}^{E(\Gamma)} \times (S^1)^{E(\Gamma)} \simeq 
(\C^\ast)^{E(\Gamma)}$. Using these coordinates, we consider the partial compactification $F$ of $(\C^\ast)^{E(\Gamma)}$ obtained by 
considering the real oriented blow-up of its $\R_{>0}$-factor at the origin and identify its boundary points  with \ph curves of 
coordinates $(l,\Theta)$ up to positive rescaling of $l$, see Remark \ref{rem:phcurv}. Hence, the 1-parametric family $\left\lbrace V_\theta \right\rbrace_\theta$ induces 
a loop in $\partial F$. 

By \cite[Proposition 4.9]{L}, there exists a smooth analytic subset $U \subset \R_{>0}^{E(\Gamma)} \times 
\left\lbrace \Theta_\theta \right\rbrace_\theta$ of codimension $g_{\Li}$ such that $\overline{U} \subset F$ 
intersects transversally $\partial F$ along a smooth locus containing $\left\lbrace V_\theta 
\right\rbrace_\theta$. By the same proposition, $U$ comes with a continuous map $s : U \rightarrow \PSec$ given by integration of particular 
differentials such that the curves $u\in U$ and $s(u)\in \PSec$ are isomorphic. It implies that one can find a closed path $\left\lbrace 
(l_\theta, \Theta_\theta) \right\rbrace_\theta \subset U$ homotopic to $\left\lbrace V_\theta \right\rbrace_\theta$ in $\overline{U}$. Define $C^\mathbb{T}_\theta:= s(l_\theta, \Theta_\theta)$. According to \cite[Theorem 2]{L2}, 
$\left\lbrace (l_\theta, \Theta_\theta)\right\rbrace_\theta$ can be chosen so that 
$C^\mathbb{T}_0=C^\mathbb{T}_{2\pi} \in \Hd$. Moreover, it follows from the construction of the map $s$ that the pair of pants 
decomposition of $C^\mathbb{T}_0$ induced by $s$ is the decomposition given by $\left\lbrace \delta_\sigma \, \vert \, \sigma \in E(S) 
\right\rbrace$. Together with the definition of $\Theta_\theta$, it implies that the monodromy induced by the closed path $\left\lbrace 
C_\theta^\mathbb{T}  \right\rbrace_\theta$  is 
$$ \prod_{\sigma\in E(G)} \tau_{\sigma}^{m(\sigma)} = \tau_\mathcal{G} \in \MCG(C^\mathbb{T}_0).$$
Now, By the connectedness of $\Hd$, see Theorem \ref{thm:connected}, we can conjugate the closed path $\left\lbrace 
C^\mathbb{T}_\theta \right\rbrace_{\theta}$ by a continuous path in $\Hd$ joining $C_0$ to $C^\mathbb{T}_0$ in order to get  a closed 
path $\epsilon$ based at $C_0$. By Corollary \ref{cor:triv}, we deduces that $ \mu(\epsilon)=\tau_\mathcal{G} \in \MCG(C_0)$.
\end{proof}

\section{Combinatorial tools}\label{sec:combi}

\subsection{Unimodular convex subdivisions}

In order to apply Theorem \ref{thm:rea}, we need to be able to construct unimodular convex subdivisions of a polygon. We give an extension lemma and a refinement lemma which will be our main construction tools.

\begin{Lemma}\label{lem:extend}
Let $\Delta'\subset \Delta$ be two convex lattice polygons and take a convex subdivision $S'$ of $\Delta'$. There exists a convex subdivision of $\Delta$ which extends $S'$.
\end{Lemma}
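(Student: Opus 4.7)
The plan is to translate the problem into the language of piecewise-linear convex functions. By Definition~\ref{def:subdiv}, I can write $S' = S_{h'}$ for some height function $h'$ on $A' := \Delta' \cap \Z^2$ (if $h'$ is originally defined on a smaller subset, extending by large values at the missing lattice points does not alter $S_{h'}$). My goal is to construct a function $h : A \to \R$ on $A := \Delta \cap \Z^2$ extending $h'$ such that the resulting convex subdivision $S_h$ of $\Delta$ restricts to $S'$ on $\Delta'$.

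The extension will be the naive one: keep $h = h'$ on $A'$, and set $h(p) = M$ for $p \in A \setminus A'$, where $M$ is a constant to be chosen. For each $2$-dimensional cell $\Delta_j$ of $S'$, let $L_j : \R^2 \to \R$ be the unique affine function agreeing with $\nu_{h'}$ on $\Delta_j$. Convexity of $\nu_{h'}$ yields $L_j \leq \nu_{h'} \leq h'$ on $A'$, with equality $L_j(p) = h'(p)$ at every vertex $p$ of $\Delta_j$ (these are the lattice points whose lift lies on the face of the epigraph $\Delta_{h'}$ projecting to $\Delta_j$). Since $A \setminus A'$ is finite, I pick
\[
M > \max_{\Delta_j} \max_{p \in A \setminus A'} L_j(p),
\]
the outer maximum ranging over the $2$-cells of $S'$, so that $L_j(p) < h(p) = M$ for every $p \in A \setminus A'$.

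With this choice, for each $2$-cell $\Delta_j$ of $S'$, the affine function $L_j$ satisfies $L_j \leq h$ on all of $A$, with equality at the vertices of $\Delta_j$. Consequently $L_j \leq \nu_h$ on $\Delta$; combined with $\nu_h = L_j$ at the vertices of $\Delta_j$ and the convexity of $\nu_h$, this forces $\nu_h = L_j$ throughout $\Delta_j$. Thus each $\Delta_j$ sits in a single domain of linearity of $\nu_h$, and adjacent cells $\Delta_j$, $\Delta_{j'}$ contribute distinct affine pieces $L_j \neq L_{j'}$, so they stay in distinct cells of $S_h$. This yields $S_h|_{\Delta'} = S'$, so $S_h$ is a convex subdivision of $\Delta$ extending $S'$. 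The only delicate ingredient is the strict inequality in the definition of $M$: without it, the face of $\Delta_h$ projecting to $\Delta_j$ could acquire an extra vertex at a point of $A \setminus A'$, either merging $\Delta_j$ with a cell outside $\Delta'$ or forcing an unwanted refinement; the strict bound rules this out and is the only place where the choice of $M$ plays a genuine role.
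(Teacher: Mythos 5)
Your argument is correct, and it takes a genuinely different — and arguably cleaner — route than the paper's. The paper keeps the convex function $\nu$ on $\Delta'$ and lifts only the vertices $K$ of $\Delta$ lying outside $\Delta'$ to a height $h$; because the non-vertex lattice points of $\Delta\setminus\Delta'$ are then free to sit wherever the convex hull puts them, the authors have to run a separate, rather delicate $P^i_{\pm\varepsilon}$ analysis to guarantee that the edges of $\partial\Delta'$ survive in the resulting subdivision. You instead assign an explicit height $M$ to \emph{every} lattice point of $\Delta\setminus\Delta'$. This rigidifies the situation: the face of $\Delta_h$ supported by the affine piece $L_j$ can only touch points $(p,h(p))$ with $L_j(p)=h(p)$, the strict bound on $M$ excludes all $p\in A\setminus A'$, and for $p\in A'$ the maximality of $\Delta_j$ as a domain of linearity of $\nu_{h'}$ forces $p$ to be a vertex of $\Delta_j$. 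Hence the face is exactly $\Delta_j$, so every $\Delta_j$ is a cell of $S_h$ and the boundary edges of $\Delta'$ are automatically edges of $S_h$ — the issue the paper spends half its proof on simply evaporates. Your final paragraph gestures at this last step somewhat informally (you say the strict bound ``rules out'' extra vertices without quite spelling out why no extra vertex can come from $A'$ either), but the needed fact — that $\Delta_j$ is the full set $\{x\in\Delta': \nu_{h'}(x)=L_j(x)\}$ — follows directly from maximality of domains of linearity, so this is a presentational rather than a mathematical gap.
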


\begin{proof}
  By assumption, we can associate a piecewise-linear convex function $\nu : \Delta'\rightarrow \R$ to the subdivision $S'$. We may also assume that $\nu$ only takes positive values. Denote by $K$ the set of vertices  of $\Delta$ which are not in $\Delta'$. For each $h\in\R$, define the piecewise linear function $\nu_h: \Delta\rightarrow \R$ whose graph is the union of the bounded faces of the convex hull of the union of the epigraph of $\nu$ and of the rays $K\times [h,+\infty]$.

There exists a value $h_0\in\R$ such that for any $h\geq h_0$, the subdivision $S_{\nu_h}$ of $\Delta$ associated to $\nu_h$ and the subdivision $S$ coincide on the interior of $\Delta'$. Indeed, we can take $h_0$ such that the set $K\times \{h_0\}\subset \Delta\times \R$ is above all the supporting hyperplanes of the epigraph of $\nu$.

Now we only need to ensure that we can choose $h\geq h_0$ such that the boundary of $\Delta'$ is supported on $S_h$. For each pair of lattice points $v$, $w$ which are consecutive on the boundary of $\Delta'$ and not both in the boundary of $\Delta$, consider the points $P_{\pm\varepsilon} = \frac{v+w}{2} \pm \varepsilon n$, where $n$ is the exterior normal unit vector to $\Delta'$ on the segment $[v,w]$. We can take an $\varepsilon_0>0$ such that for any $0<\varepsilon\leq\varepsilon_0$ and for any $h\geq h_0$, the segment $[P_{-\varepsilon},P_{\varepsilon}]$ intersects the subdivision $S_{\nu_h}$ at most once. This intersection is nonempty exactly when the segment $[v,w]$ is in $S_{\nu_h}$. Since $\Delta'$ has only a finite number of lattice points, we can repeat the same construction for all such pairs of points $v$ and $w$ to obtain a finite set of points $P^i_{\pm \varepsilon_0}$, $i\in I$, and a number $\varepsilon>0$ such that for all $i\in I$ and $h\geq h_0$, the segment $[P^i_{-\varepsilon},P^i_{\varepsilon}]$ intersects the subdivision $S_{\nu_h}$ at most once. The restriction of $\nu$ over the segment $[P^i_{-\varepsilon},P^i_{\varepsilon}]\cap\Delta'$ is affine linear of slope $\alpha_i$. We need to find $h\geq h_0$ such that $\nu$ is not affine linear along the whole segment $[P^i_{-\varepsilon},P^i_{\varepsilon}]$.

For any $i\in I$ and any $h\geq h_0$, the point $(P^i_{\varepsilon},\nu_h(P^i_{\varepsilon}))\in\Delta\times\R$ is a convex combination $\sum_{A\in\Delta'\cap\Z^2} \lambda_{A}(A,\nu_h(A)) + \sum_{B\in K} \mu_{B}(B,h)$. The coefficients $\lambda_A\in [0,1]$ and $\mu_B\in [0,1]$ are not uniquely defined. However, since the points $P^i_{\varepsilon}$ are outside $\Delta'$, there exists a $\mu_0>0$ such that for any such convex combination giving some point $P^i_{\varepsilon}$, there exists $B\in K$ with $\mu_B\geq \mu_0$. Take $h = \max(h_0,\frac{\max(\nu) + 2\varepsilon \max_{i\in I}(\alpha_i)}{\mu_0} +1)$. Then, the function $\nu_h$ cannot be affine linear along any of the segments $[P^i_{-\varepsilon},P^i_{\varepsilon}]$ as its value at the points $P^1_{\varepsilon}$ is too high. Thus, the subdivision $S_{\nu_h}$ contains the boundary of $\Delta'$ and extends $S'$.
\end{proof}

\begin{Lemma}[\cite{Haa}, Lemma 2.1]\label{lem:refinement}
  Let $S$ be a convex subdivision of a convex lattice polygon $\Delta$. There exists a refinement of $S$ which is a unimodular convex subdivision of $\Delta$.
\end{Lemma}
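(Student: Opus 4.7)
The overall strategy is to perturb the convex piecewise linear function $\nu : \Delta \to \R$ inducing $S$ by a small function $\epsilon \eta$ whose effect within each 2-cell of $S$ is a unimodular triangulation, while the 1-skeleton of $S$ stays intact because $\nu$ has strict creases along it. Writing $\nu^{\ast} := \nu + \epsilon \eta$, the goal is to build $\eta$ so that it is continuous on $\Delta$, restricts to a convex piecewise linear function on each 2-cell $\Delta_i$ of $S$ inducing some unimodular triangulation $T_i$ of $\Delta_i$, and is small enough that $\nu^{\ast}$ remains globally convex.

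For each 2-cell $\Delta_i$ of $S$, which is itself a convex lattice polygon, I would produce a regular unimodular triangulation $T_i$ together with an inducing convex piecewise linear function $\eta_i : \Delta_i \to \R$ via a pulling construction: order the lattice points $v_1, \ldots, v_N$ of $\Delta_i$ and assign decreasing heights $\eta_i(v_k)$. The resulting regular subdivision uses every lattice point of $\Delta_i$, and in dimension $2$ Pick's formula then forces each maximal cell to have Euclidean area $1/2$, so $T_i$ is unimodular. Adjacent triangulations $T_i$ and $T_j$ automatically agree on the shared edge $\Delta_i \cap \Delta_j$, since the unimodular decomposition of a lattice segment into primitive sub-segments is unique. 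To glue the $\eta_i$ into a single continuous function $\eta : \Delta \to \R$, one first fixes common heights on the lattice points of the 1-skeleton of $S$ (chosen strictly convex along each edge of $\partial \Delta_i$ so that every boundary lattice point appears as a vertex of the induced subdivision), and then extends them to the interior of each $\Delta_i$ using the pulling heights above. The $T_i$ then glue into a subdivision $T$ of $\Delta$ refining $S$.

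Finally, $\nu^{\ast} := \nu + \epsilon \eta$ is globally convex for $\epsilon > 0$ sufficiently small: across each 1-cell $e$ of $S$ the slope of $\nu$ jumps by some positive amount $\theta_e$, and for $\epsilon$ small this strict convexity dominates any loss of convexity possibly contributed by $\eta$ across $e$. Within each $\Delta_i$, the function $\nu$ is affine, so the subdivision induced by $\nu^{\ast}$ on $\Delta_i$ coincides with the one induced by $\eta_i$, namely $T_i$. Hence $\nu^{\ast}$ is a convex piecewise linear function inducing $T$, a unimodular subdivision refining $S$. The main obstacle is the cell-by-cell construction of the $\eta_i$ together with their consistent gluing along the 1-skeleton of $S$; both steps lean on $2$-dimensional lattice geometry — uniqueness of the primitive decomposition of a lattice segment, and unimodularity of pulling triangulations that use every lattice point — features that become substantially more delicate in higher dimensions.
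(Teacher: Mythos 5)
Your overall strategy — perturb the convex piecewise-linear function $\nu$ inducing $S$ by a small cell-wise convex $\epsilon\eta$, and invoke Pick's formula once every lattice point is a vertex of the refined subdivision — is correct and is essentially the argument behind the cited result. There is, however, a genuine gap in the cell-wise construction as written: assigning decreasing heights $\eta_i(v_1)>\eta_i(v_2)>\cdots>\eta_i(v_N)$ does not force every lattice point of $\Delta_i$ onto the lower convex hull, which is what you need for unimodularity. For three consecutive collinear lattice points with heights $-t,\ -t^2,\ -t^3$ (any $t>1$), the middle one lies strictly \emph{above} the chord joining the outer two, so it is never a vertex of the lower hull, however fast the heights decrease. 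The assertion ``the resulting regular subdivision uses every lattice point of $\Delta_i$'' therefore does not follow from the construction you describe.

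The fix is standard: take $\eta_i$ to be the restriction to $\Delta_i\cap\Z^2$ of a \emph{strictly convex} function (e.g.\ a quadratic $\phi(x)=\lvert x\rvert^2$) plus a generic linear tie-breaker $\lambda$. Strict convexity guarantees that every lattice point is a vertex of the lower hull (a supporting plane at $v_0$ is $\phi'(v_0)+\lambda$, where $\phi'$ is the tangent plane of $\phi$ at $v_0$), genericity of $\lambda$ makes the induced subdivision a triangulation, and Pick's formula then gives area $1/2$. With this choice of $\eta_i$, your gluing along the $1$-skeleton of $S$ (which indeed works, by uniqueness of the primitive decomposition of a lattice segment) and the $\epsilon$-smallness argument for global convexity go through. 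You can also avoid the cell-by-cell construction and the gluing altogether by perturbing once globally: set $h(v)=\nu(v)+\epsilon\bigl(\phi(v)+\lambda(v)\bigr)$ on all lattice points of $\Delta$. For $\epsilon$ small the strict creases of $\nu$ across the $1$-cells of $S$ persist, so the new subdivision refines $S$; every lattice point of $\Delta$ is a vertex of the lower hull by the supporting-plane argument just given; and genericity plus Pick yields a unimodular triangulation.
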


\subsection{Constructing Dehn twists}

We give some applications of Theorem \ref{thm:rea} that will be useful in the rest of the paper. In particular, the following constructions are enough to treat the case $d<2$. Let $\Delta$ be a smooth convex lattice polygon and $(X,\Li)$ the associated polarized toric surface. Fix a curve $C_0\in\Hd$.

\begin{Lemma}\label{lem:chasing}
Let $\G = (G,m)$ be a weighted graph of $\Delta$ such that $\tau_{\G}\in\im(\mu)$ (resp. $[\tau_{\G}]\in\im([\mu])$). Assume that $\G$ has a vertex $v\in V(G)$ of valency $1$ which is an interior point of $\Delta$ and let $\sigma\in E(G)$ be the edge ending at $v$. Denote by $\G'$ the weighted graph obtained from $\G$ by removing $\sigma$. If $m(\sigma) = \pm 1$ then $\tau_{\sigma}\in\im(\mu)$ and $\tau_{\G'}\in\im(\mu)$ (resp. $[\tau_{\sigma}]\in\im([\mu])$ and $[\tau_{\G'}]\in\im([\mu])$).  
\end{Lemma}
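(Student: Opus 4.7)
The plan is to exploit two standard ingredients. Since $v\in\itr(\Delta)$, the A-cycle $\delta_v$ is a vanishing cycle by Theorem~\ref{thm:acycle}, so $\tau_v\in\im(\mu)$; and since $\delta_v$ meets $\delta_\sigma$ transversally at a single point, $\tau_v$ and $\tau_\sigma$ satisfy the braid relation in $\MCG(C_0)$. The lemma will follow by combining these with the hypothesis $\tau_\G\in\im(\mu)$.

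First I would observe that for any two distinct primitive integer segments $\sigma,\sigma'\subset\Delta$ the loops $\delta_\sigma$ and $\delta_{\sigma'}$ are disjoint: their lifts $\tilde\sigma,\tilde{\sigma'}\subset\tilde\Delta$ are disjoint (a shared lattice endpoint has been blown up into a boundary circle, which the two lifts hit at different points), and this disjointness is inherited by the double cover $C^\circ_\Delta$. Hence the Dehn twists $\tau_{\sigma'}$, $\sigma'\in E(G)$, pairwise commute and we can write $\tau_\G=\tau_\sigma^\epsilon\,\tau_{\G'}$ where $\epsilon=m(\sigma)=\pm 1$. Moreover, since $v$ has valency $1$ in $G$, the remark following Definition~\ref{def:segtwist} shows that $\delta_v$ is disjoint from every $\delta_{\sigma'}$ with $\sigma'\in E(G')$ and meets $\delta_\sigma$ transversally at exactly one point. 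In particular $\tau_v$ commutes with $\tau_{\G'}$ and fixes the isotopy class of $\delta_v$.

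The hypothesis $\tau_\G\in\im(\mu)$ and Lemma~\ref{irred} now imply that $\alpha:=\tau_\G(\delta_v)$ is a vanishing cycle. Using $\tau_{\G'}(\delta_v)=\delta_v$ we compute $\alpha=\tau_\sigma^\epsilon(\delta_v)$, and Proposition~\ref{dehnm} applied to $\alpha$ yields
\[
\tau_\alpha=\tau_\sigma^\epsilon\,\tau_v\,\tau_\sigma^{-\epsilon}\in\im(\mu).
\]

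To finish, the braid relation $\tau_v\tau_\sigma\tau_v=\tau_\sigma\tau_v\tau_\sigma$ rearranges into $\tau_\sigma^\epsilon\,\tau_v\,\tau_\sigma^{-\epsilon}=\tau_v^{-\epsilon}\,\tau_\sigma\,\tau_v^{\epsilon}$ for $\epsilon=\pm 1$, so $\tau_\sigma=\tau_v^{\epsilon}\,\tau_\alpha\,\tau_v^{-\epsilon}\in\im(\mu)$, and then $\tau_{\G'}=\tau_\sigma^{-\epsilon}\tau_\G\in\im(\mu)$. Each step is an equality in $\MCG(C_0)$, so the whole argument projects to $\spaut(H_1(C_0,\Z))$ and gives the respective statements for $[\mu]$. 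The crucial role of the hypothesis $|m(\sigma)|=1$ is that only then can $\tau_\sigma^\epsilon$ be isolated from $\tau_\G$ through the single conjugation afforded by the braid identity with $\tau_v$; this is also the main subtlety of the argument, as a priori $\G$ need not even be admissible, so Theorem~\ref{thm:rea} cannot be invoked directly on $\G'$.
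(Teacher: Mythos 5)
Your argument is correct and follows essentially the paper's proof: you conjugate $\tau_v$ by $\tau_\G$, use that $\tau_{\G'}$ commutes with $\tau_v$, and extract $\tau_\sigma$ via the braid relation, whereas the paper states the equivalent geometric identity $\tau_v\tau_\sigma(\delta_v)=\delta_\sigma$ directly and conjugates $\tau_v$ by $\tau_v\tau_\G$ — the computation is the same. Two small remarks: the fact that $\alpha=\tau_\G(\delta_v)$ is a vanishing cycle uses the observation preceding Lemma~\ref{irred} rather than Lemma~\ref{irred} itself, and the whole detour through vanishing cycles is superfluous (and is also why the final projection to homology is not quite ``each step is an equality''); it is cleaner to note that $\tau_\alpha=\tau_\G\tau_v\tau_\G^{-1}$ lies in $\im(\mu)$ (resp.\ $\im(\mmu)$) simply because the image is a subgroup, which makes both the geometric and the homological conclusions immediate.
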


\begin{proof}
Assume first that $m(\sigma)=1$. We know from Theorem \ref{thm:acycle} that $\tau_v$ is in $\im(\mu)$. Since $\delta_v$ and $\delta_{\sigma}$ intersect transversely in only one point, $\tau_v\tau_{\sigma}(\delta_v)$ is isotopic to $\delta_{\sigma}$. Moreover $\tau_v$ and $\tau_{\sigma}$ commute with $\tau_{\G'}$. Thus we obtain
\[
(\tau_v\tau_{\G})\tau_v(\tau_v\tau_{\G})^{-1} = (\tau_v\tau_{\sigma}\tau_{\G'})\tau_v(\tau_v\tau_{\sigma}\tau_{\G'})^{-1} = (\tau_v\tau_{\sigma})\tau_v(\tau_v\tau_{\sigma})^{-1} = \tau_{\sigma}\in\im(\mu).
\]
The case when $m(\sigma)=-1$ is immediate.

Since the map $\MCG(C_0)\rightarrow \spaut(H_1(C_0,\Z))$ is a morphism, the homological statement follows from the previous argument.
\end{proof}

\begin{Notation}
In the rest of the paper, we will draw weighted graphs using two different line styles: the dashed lines will be for segments for which we know that the associated Dehn twist is in the image of the monodromy and the plain lines will be for the others.
\end{Notation}

Let us introduce a type of primitive integer segment that will appear often in the rest of the paper.

\begin{Definition}\label{def:bridge}
A primitive integer segment $\sigma\subset \Delta$ is a bridge if it joins a lattice point of $\partial \Delta$ to a lattice point of $\partial \da$ and  does not intersect $\itr (\da)$.
\end{Definition}

\begin{Lemma}\label{lem:allbridge}
Let $v$ be a lattice point on $\partial \da$ and $\sigma'$ be a bridge ending at $v$.
Assume that $(\tau_{\sigma'})^m\in\im(\mu)$ (resp. $[\tau_{\sigma'}]^m\in\im([\mu])$) for some integer $m\in \Z$.  Then we have $(\tau_{\sigma})^m\in\im(\mu)$ (resp. $[\tau_{\sigma}]^m\in\im([\mu])$) for any bridge $\sigma$ ending at $v$.
\end{Lemma}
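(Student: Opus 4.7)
The plan is to apply Theorem~\ref{thm:rea} to a well-chosen admissible weighted graph containing $\sigma$ and $\sigma'$, and to isolate $\tau_\sigma^m$ from the resulting relation using the hypothesis on $\sigma'$.

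First I would dispose of the case where the outgoing primitive integer vectors $\vec\sigma$ and $\vec{\sigma'}$ at $v$ are anti-parallel, i.e.\ $\vec{\sigma'}=-\vec\sigma$. In this case the weighted graph $\mathcal{G}=\{(\sigma,m),(\sigma',m)\}$ satisfies the balance condition $m\vec\sigma + m\vec{\sigma'}=0$ at its unique interior vertex $v$. By Lemmas~\ref{lem:extend} and~\ref{lem:refinement}, $\mathcal{G}$ is a subgraph of some unimodular convex subdivision of $\Delta$ and is therefore admissible, so Theorem~\ref{thm:rea} gives $\tau_\mathcal{G}=\tau_\sigma^m\tau_{\sigma'}^m\in\im(\mu)$. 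Since $\sigma$ and $\sigma'$ meet only at $v$, the analysis of the trivialisation $R_C$ in Proposition~\ref{prop:triv} (combined with the remark following Definition~\ref{def:segtwist}) shows that $\delta_\sigma\cap\delta_{\sigma'}=\emptyset$, so $\tau_\sigma$ and $\tau_{\sigma'}$ commute. The hypothesis $\tau_{\sigma'}^m\in\im(\mu)$ then yields $\tau_\sigma^m=\tau_\mathcal{G}\cdot\tau_{\sigma'}^{-m}\in\im(\mu)$.

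For general $\sigma$ and $\sigma'$, I would reduce to the anti-parallel case by inserting intermediate bridges at $v$. Given any two primitive integer vectors $\vec u_1,\vec u_2$ appearing as outgoing directions of bridges at $v$, one can find a third primitive vector $\vec u_3$ (realised as a bridge $\sigma''$ at $v$) such that there exist integer weights $w_1,w_2,w_3$, not all zero, with $\sum w_i\vec u_i=0$; this amounts to the general fact that three vectors in $\R^2$ are linearly dependent. One then builds an admissible graph $\mathcal{G}'=\{(\sigma,w_1m),(\sigma',w_2m),(\sigma'',w_3m)\}$, extended to a unimodular convex subdivision, so that Theorem~\ref{thm:rea} produces $\tau_\sigma^{w_1m}\tau_{\sigma'}^{w_2m}\tau_{\sigma''}^{w_3m}\in\im(\mu)$, with all three factors mutually commuting. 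Bootstrapping from the base case—first using an anti-parallel partner of $\sigma'$ to export the hypothesis to a second bridge—one populates enough bridges at $v$ with $\tau^m\in\im(\mu)$ to apply this three-bridge relation and isolate $\tau_\sigma^m$.

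The homological statement is obtained by running the same construction through the algebraic monodromy map $[\mu]=\spaut(\cdot)\circ\mu$, which is a group morphism. The main obstacle I expect is the combinatorial verification that the auxiliary primitive segments can genuinely be chosen as bridges at $v$—that is, that their segments from $v$ to $\partial\Delta$ do not cross $\itr(\da)$, and that the weights $w_i$ match $m$ divisibly enough to recover $\tau_\sigma^m$ exactly rather than some multiple of it. This combinatorial check uses the convexity of $\Delta$ and the fact that $v\in\partial\da\subset\itr(\Delta)$ provides bridges in sufficiently many directions.
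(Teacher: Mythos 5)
Your proposal is far more elaborate than necessary and, as written, contains a gap that makes the reduction fail. The key fact you are missing—and the one the paper's proof uses—is purely topological: for any two bridges $\sigma$ and $\sigma'$ ending at the same lattice point $v\in\partial\da$, the loops $\delta_\sigma$ and $\delta_{\sigma'}$ are \emph{isotopic} in the compact curve $C_0$. (Both arcs $\tilde\sigma,\tilde\sigma'$ run from the boundary circle $\tilde v\subset\tilde\Delta$ to arcs of $\partial\tilde\Delta$ corresponding to points of $\partial\Delta$, staying inside $\Delta\setminus\itr(\da)$; once the boundary punctures of $C^\circ_\Delta$ are filled in to recover $C_0$, one can slide one arc to the other across those filled-in points.) Consequently $\tau_\sigma=\tau_{\sigma'}$ already as elements of $\MCG(C_0)$, and the lemma is immediate for both $\mu$ and $\mmu$. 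No invocation of Theorem~\ref{thm:rea} or balancing conditions is needed.

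As for your construction: the ``anti-parallel base case'' is vacuous in the generic situation. If $v$ lies in the interior of an edge $\epsilon$ of $\da$, all directions of bridges at $v$ lie strictly on the side of the line supporting $\epsilon$ that faces away from $\itr(\da)$; two bridges at such a $v$ can therefore never be anti-parallel, so there is no admissible two-edge graph $\{(\sigma,m),(\sigma',m)\}$ balanced at $v$ to start from, and the bootstrapping step (``first using an anti-parallel partner of $\sigma'$ to export the hypothesis'') has nothing to anchor it. Even setting this aside, the three-bridge relation $\tau_\sigma^{w_1 m}\tau_{\sigma'}^{w_2 m}\tau_{\sigma''}^{w_3 m}\in\im(\mu)$ does not isolate $\tau_\sigma^m$ unless $w_1=\pm 1$ \emph{and} $\tau_{\sigma''}^{w_3 m}$ is known to be in $\im(\mu)$, and you have established neither. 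So the route through admissible graphs is not merely roundabout—it does not close.
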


\begin{proof}
This follows from the fact that for any two bridges $\sigma$ and $\sigma'$ ending at the same point of $\da$, the loops $\delta_{\sigma}$ and $\delta_{\sigma'}$ are isotopic in $C_0$.
\end{proof}

\begin{Proposition}\label{prop:corner}
  Let $\kappa\in\da$ be a vertex. If $\sigma$ is a bridge ending at $\kappa$, then $\tau_{\sigma}$ is in $\im(\mu)$.
\end{Proposition}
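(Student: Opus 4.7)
The plan is to exhibit an explicit admissible weighted graph at $\kappa$ whose associated product of Dehn twists telescopes directly to $\tau_\sigma$; no chasing will be needed. First, by the smoothness of $\da$ at $\kappa$ (Proposition \ref{prop:poly}(3)) I will apply a $\operatorname{GL}_2(\Z)$-transformation to normalise so that $\kappa=(0,0)$ and the two edges of $\da$ at $\kappa$ lie in the directions $(1,0)$ and $(0,1)$. By Proposition \ref{prop:poly}(1) the parallel edges of $\Delta$ lie at integer distance $1$ from these, i.e.\ on the lines $\{y=-1\}$ and $\{x=-1\}$. A quick convexity argument (the interior lattice points $(0,0)$ and $(1,0)$ belong to $\itr(\Delta)$, so moving one unit down or left reaches the boundary) shows that $(-1,0)$, $(0,-1)$ and $(1,-1)$ all lie on $\partial\Delta$. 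Consequently each of the primitive segments
\[
\sigma_1:=[\kappa,(-1,0)],\quad \sigma_2:=[\kappa,(0,-1)],\quad \sigma_3:=[\kappa,(1,-1)]
\]
is a bridge at $\kappa$: each leaves $\kappa\in\partial\da$ into $\R^2\setminus\da$ and ends on $\partial\Delta$.

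The key algebraic observation is the identity $(-1,0)-(0,-1)+(1,-1)=(0,0)$. This leads me to introduce the weighted graph $\G=(G,m)$ with $G=\{\sigma_1,\sigma_2,\sigma_3\}$ and weights $m(\sigma_1)=m(\sigma_3)=1$, $m(\sigma_2)=-1$. The unique interior vertex of $G$ is $\kappa$, where the identity above is precisely the balance condition; the three remaining vertices lie on $\partial\Delta$ and need not be balanced. By Lemma \ref{lem:extend} followed by Lemma \ref{lem:refinement}, the three segments can be completed to a unimodular convex subdivision of $\Delta$, so $\G$ is admissible, and Theorem \ref{thm:rea} then gives $\tau_\G\in\im(\mu)$.

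To conclude, I will invoke the fact underlying the proof of Lemma \ref{lem:allbridge}: any two bridges ending at the same point of $\partial\da$ give rise to isotopic loops on $C_0$, and hence equal elements of $\MCG(C_0)$. Since $\sigma_1,\sigma_2,\sigma_3$ are all bridges at $\kappa$, this yields $\tau_{\sigma_1}=\tau_{\sigma_2}=\tau_{\sigma_3}$; denoting this common element by $\tau_{\sigma_1}$, I get
\[
\tau_\G \;=\; \tau_{\sigma_1}\cdot\tau_{\sigma_2}^{-1}\cdot\tau_{\sigma_3} \;=\; \tau_{\sigma_1}\cdot\tau_{\sigma_1}^{-1}\cdot\tau_{\sigma_1} \;=\; \tau_{\sigma_1}\;\in\;\im(\mu),
\]
and a final appeal to Lemma \ref{lem:allbridge} transfers this to $\tau_\sigma\in\im(\mu)$ for the given bridge $\sigma$.

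The main conceptual step, and essentially the only one requiring any ingenuity, is finding a triple of bridge directions at $\kappa$ that (i) sums to zero with signed unit weights and produces the signed count $+1$, and (ii) whose endpoints lie on $\partial\Delta$ for every admissible $\Delta$. My triple $\{(-1,0),(0,-1),(1,-1)\}$ works universally precisely because these lattice points are obtained by shifting the interior points $\kappa$ and $(1,0)$ by one unit in a coordinate direction, so convexity forces each shift to land on $\partial\Delta$, regardless of whether $\Delta$ has a genuine corner at $(-1,-1)$ or a diagonal edge truncating it.
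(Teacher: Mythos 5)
Your overall strategy matches the paper's: build a balanced, admissible weighted graph supported on three bridges at $\kappa$ whose weights sum to $1$, then use the fact that bridges ending at the same lattice point of $\partial\da$ give isotopic loops to telescope $\tau_\G = \tau_\sigma$. The difference is which three bridges you pick, and here there is a genuine gap: the point $(1,-1)$ need not lie on $\partial\Delta$; it can lie strictly outside $\Delta$.

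Concretely, take $\Delta$ to be the smooth hexagon with vertices $(-1,-1)$, $(0,-1)$, $(2,0)$, $(3,1)$, $(3,3)$, $(-1,3)$. Its interior polygon $\da$ has vertices $(0,0),(1,0),(2,1),(2,2),(0,2)$, and $\kappa=(0,0)$ is a vertex of $\da$ whose two edges are directed by $(1,0)$ and $(0,1)$ --- exactly your normalized position. But the edge of $\Delta$ on $y=-1$ runs only from $(-1,-1)$ to $(0,-1)$, and the adjacent edge $[(0,-1),(2,0)]$ has outward normal $(1,-2)$, so the supporting half-plane is $x-2y\le 2$; at $(1,-1)$ this reads $3\le 2$, which fails, so $(1,-1)\notin\Delta$. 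Thus $\sigma_3=[\kappa,(1,-1)]$ is not a bridge (it is not even contained in $\Delta$), and the proposed graph $\G$ cannot be completed to a unimodular convex subdivision of $\Delta$. The false step is "moving one unit down or left reaches the boundary": this holds for $(0,0)=\kappa$ by the corner analysis of \S\ref{sec:normalization} (the two cases of Figure \ref{fig:cornercase}, which rely on Proposition \ref{prop:poly}(1)), but moving $(1,0)$ down can overshoot $\Delta$ whenever the edge of $\Delta$ on $y=-1$ is short.

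The paper avoids this by normalizing at a vertex of $\Delta$ (not of $\da$) adjacent to $\kappa$: placing that vertex at the origin with the two edges of $\Delta$ along the coordinate axes forces $\kappa=(1,1)$ and guarantees that $(0,0)$, $(1,0)$, $(0,1)$ are all boundary lattice points of $\Delta$, so that $[\kappa,(0,0)]$, $[\kappa,(1,0)]$, $[\kappa,(0,1)]$ (weights $-1,1,1$) are always three bridges at $\kappa$. Translated to your coordinates this uses $(-1,-1)$ as the third direction in Case $1$ of Figure \ref{fig:cornercase} and $(-1,1)$ or $(1,-1)$ in Case $2$; the point is that the choice must adapt to the corner of $\Delta$, which your fixed triple does not. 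Repairing your argument would mean proving the balanced triple you chose is always realised by bridges, and that is precisely what fails.
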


\begin{proof}
  Using an invertible affine transformation preserving the lattice, we may assume that $\kappa$ is the point $(1,1)$, and that the origin is a vertex of $\Delta$ and the two edges of $\Delta$ meeting at this point are directed by $(1,0)$ and $(0,1)$.

Using Lemma \ref{lem:allbridge}, we may assume that $\sigma$ is the segment joining $(0,0)$ to $(1,1)$. Denote by $\sigma_1$ and $\sigma_2$ the segments joining $(1,1)$ respectively to $(1,0)$ and $(0,1)$. Define the weighted graph $\G=(G,m)$ given by Figure \ref{fig:corner}.

\begin{figure}[h]
\centering{
\input{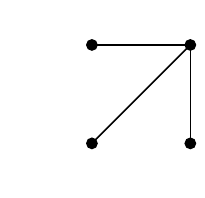_t}}
\caption{The weighted graph $\G$} \label{fig:corner}
\end{figure}

 It satisfies the balancing condition at the point $(1,1)$. To prove it is admissible, we only need to find a unimodular convex subdivision of $\Delta$ supporting $G$. To that end, take the subdivision of the square consisting of $G$ and the segments joining $(0,0)$ to $(1,0)$ and $(0,0)$ to $(0,1)$. It is convex as it can be obtained from the function taking value $0$ on $(0,0)$ and $(1,1)$ and $1$ on $(0,1)$ and $(1,0)$. Using Lemmas \ref{lem:extend} and \ref{lem:refinement}, we obtain a unimodular convex subdivision of $\Delta$ which contains $G$.

Thus $\G$ is an admissible graph of $\Delta$. We apply Theorem \ref{thm:rea} to deduce that $\tau_{\G}$ is in $\im(\mu)$. However, the loops $\delta_{\sigma}$, $\delta_{\sigma_1}$ and $\delta_{\sigma_2}$ are all isotopic. It follows that $\tau_{\G} = \tau_{\sigma}$ which proves the statement.
\end{proof}

\begin{Proposition}\label{prop:side}
  Let $\sigma$ be a primitive integer vector lying on an edge of $\da$. The Dehn twist $\tau_{\sigma}$ is in $\im(\mu)$.
\end{Proposition}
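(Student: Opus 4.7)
The plan is to apply Theorem \ref{thm:rea} to a carefully chosen admissible weighted graph $\G$ such that, once Lemma \ref{lem:allbridge} is invoked to cancel auxiliary bridges in pairs, the product $\tau_\G$ reduces exactly to $\tau_\sigma$. First, by an invertible lattice-preserving affine transformation, I normalise the picture so that the edge of $\da$ containing $\sigma$ lies in $\{y=0\}$ with $\da \subset \{y\geq 0\}$, and $\sigma$ joins $u=(0,0)$ to $v=(1,0)$. The corresponding edge of $\Delta$ then lies in $\{y=-1\}$.

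The graph $\G$ I would take has five edges: $\sigma$ with weight $+1$, together with two bridges at each endpoint, paired with opposite weights. Concretely, at $u$ take the bridge from $(0,0)$ to $(-1,-1)$ with weight $+1$ and the bridge from $(0,0)$ to $(0,-1)$ with weight $-1$. Symmetrically at $v$, take the bridge from $(1,0)$ to $(1,-1)$ with weight $+1$ and the bridge from $(1,0)$ to $(0,-1)$ with weight $-1$. A direct check verifies the balancing condition at $u$ and at $v$. These five segments triangulate the trapezoid with vertices $(-1,-1), (1,-1), (1,0), (0,0)$ into three unimodular triangles meeting at $(0,-1)$, and convexity of this local subdivision is realised by the height function taking value $0$ at $(0,-1)$ and $1$ at each of the four other vertices. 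Lemmas \ref{lem:extend} and \ref{lem:refinement} then extend and refine this picture to a unimodular convex subdivision of $\Delta$, so $\G$ is admissible.

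Theorem \ref{thm:rea} now gives $\tau_\G \in \im(\mu)$. By Lemma \ref{lem:allbridge}, the two bridges at $u$ end at the same point of $\partial\da$, so their loops are isotopic and the corresponding Dehn twists coincide; the analogous statement holds at $v$. Because the five loops in play are pairwise disjoint or isotopic (they share endpoints only at lattice points, which they approach from distinct radial directions), the corresponding Dehn twists commute, and the two paired contributions cancel, yielding $\tau_\G = \tau_\sigma$. This places $\tau_\sigma$ in $\im(\mu)$.

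The main obstacle is geometric: the construction above requires the auxiliary lattice points $(-1,-1)$, $(0,-1)$, $(1,-1)$ to all lie on $\partial\Delta$, which can fail when $u$ or $v$ is a vertex of $\da$ whose adjacent edge of $\Delta$ is sharply inclined in the normalised coordinates. In such configurations the obstructed endpoint is necessarily a vertex of $\da$, so I would adapt the graph by replacing the paired bridges at that endpoint by a single bridge (still balanced against $\sigma$), whose Dehn twist is known to lie in $\im(\mu)$ by Proposition \ref{prop:corner}; solving for $\tau_\sigma$ still puts it in $\im(\mu)$. Verifying uniformly that this local modification covers every degenerate geometry of $\Delta$ near every possible $\sigma$ is the bookkeeping required to finish the proof.
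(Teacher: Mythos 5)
The cancellation trick is an attractive idea and genuinely different from what the paper does: you localise entirely around $\sigma$, pair two bridges with opposite weights at each endpoint, and let the twists cancel via the isotopy of bridge-loops that underlies Lemma~\ref{lem:allbridge}, so that $\tau_{\G}$ collapses to $\tau_\sigma$. This works whenever $(-1,-1),(0,-1),(1,-1)$ all lie on the edge of $\Delta$ carried by $\{y=-1\}$, and the convexity check for the local subdivision is correct. However, the fallback you offer for the degenerate case contains a genuine error. You claim that whenever one of those three points is missing from $\partial\Delta$ the obstructed endpoint of $\sigma$ must be a vertex of $\da$, so that Proposition~\ref{prop:corner} rescues you. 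This is false. The $\Delta$-edge on $\{y=-1\}$ has integer length $l_{\Delta}=l_{\da}+D_{\epsilon}^{2}+2$, where $D_{\epsilon}^{2}$ is the self-intersection of the corresponding toric divisor; for $D_{\epsilon}^{2}\leq -3$ that edge is strictly shorter than the $\da$-edge. Concretely, let $\Delta$ be the smooth lattice polygon with vertices $(0,0),(1,0),(13,3),(0,3)$ (it carries a $(-4)$-curve). Then $\da$ has vertices $(1,1),(4,1),(8,2),(1,2)$, its edge at $y=1$ has length $3$ while the corresponding $\Delta$-edge at $y=0$ has length $1$. Taking $\sigma=[(2,1),(3,1)]$, neither endpoint is a vertex of $\da$, yet for any lattice shear in your normalisation at most two of the three auxiliary points can lie on $\partial\Delta$. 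Your admissible graph does not exist, and Proposition~\ref{prop:corner} is not available, so the argument stalls.

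The paper sidesteps this entirely. It normalises at a vertex $\kappa$ of the $\da$-edge so that the edge is $[(0,0),(l,0)]$, and takes as weighted graph the whole horizontal segment $[(-1,0),(l+1,0)]$ inside $\{y=0\}$ with all weights equal to $1$. This graph always lies in $\Delta$ (it is the intersection of $\Delta$ with the line through the $\da$-edge, whose endpoints $(-1,0)$ and $(l+1,0)$ are always on $\partial\Delta$); the two extreme segments are bridges at vertices of $\da$, handled by Proposition~\ref{prop:corner}; admissibility comes from the convex function $\lvert y\rvert$; and Lemma~\ref{lem:chasing} then peels off $\tau_{\sigma_1},\ldots,\tau_{\sigma_l}$ one at a time. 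The robustness comes from never involving the $\Delta$-edge on $\{y=-1\}$, which is exactly the edge that can be too short for your construction.
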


\begin{proof}
 By Proposition \ref{prop:poly}, we may use an invertible affine transformation of $\R^2$ preserving the lattice such that $\Delta$ has a vertex at $(-1,-1)$ with corresponding edges going through $(-1,0)$ and $(0,-1)$ and such that the edge of $\da$ containing $\sigma$ is the segment joining $(0,0)$ to $(l,0)$ for some $l\geq 1$. Notice that the points $(-1,0)$ and $(l+1,0)$ are on the boundary of $\Delta$. For $i$ from $0$ to $l+1$, denote by $\sigma_i$ the segment joining $(i-1,0)$ to $(i,0)$.

We know from Proposition \ref{prop:corner} that $\tau_{\sigma_0}$ and $\tau_{\sigma_{l+1}}$ are in $\im(\mu)$. Define the weighted graph $\G = (G,m)$ given by Figure \ref{fig:side}.

\begin{figure}[h]
\centering{
\input{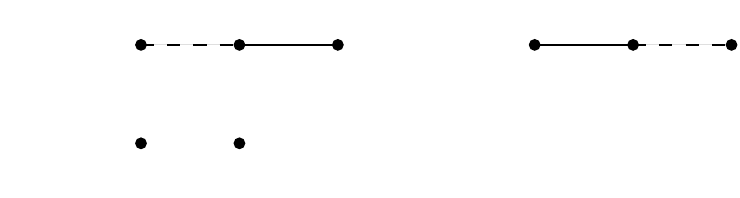_t}}
\caption{The weighted graph $\G$} \label{fig:side}
\end{figure}

This graph is balanced at all the vertices in $\da$. On the other hand, we can take a convex function on $\Delta$ which is affine linear on each component of $\Delta\setminus G$. The associated convex subdivision contains $G$. Using Lemma \ref{lem:refinement}, it follows that $\G$ is an admissible graph.

From Theorem \ref{thm:rea}, we deduce that $\tau_{\G} = \prod\tau_{\sigma_i}\in\im(\mu)$. We now apply Lemma \ref{lem:chasing} several times which shows that $\tau_{\sigma_{i}}\in\im(\mu)$ for all $i$ between $1$ and $l$.
\end{proof}

\subsection{The elliptic case}\label{sec:dless2}

\begin{proof}[Proof of Theorem \ref{thm:main1}, case $d=0$]
Let $\Xd$ be a smooth and complete toric surface and let $\Li$ be an ample line bundle on $\Xd$. Denote by $\Delta$ the associated lattice polygon and fix a curve $C_0\in\Hd$. Assume that $\Delta$ has only one interior lattice point $\kappa$ and take $\sigma$ to be one of the bridges ending at $\kappa$.  We know from Proposition \ref{prop:corner} and from Theorem \ref{thm:acycle} that both $\tau_{\kappa}$ and $\tau_{\sigma}$ are in $\im(\mu)$. Since $C_0$ is of genus $1$, the group $\MCG(C_0)$ is generated by $\tau_{\kappa}$ and $\tau_{\sigma}$. Thus, $\im(\mu)=\MCG(C_0)$.
\end{proof}

\section{Proof of Theorems \ref{thm:main1} and \ref{thm:main2}}\label{sec:dis2}

In the rest of the paper, $\Xd$ is a smooth and complete toric surface equipped with an ample line bundle $\Li$. The associated polygon is denoted by $\Delta$ and its interior polygon by $\da$. Since we already took care of the elliptic case in \S \ref{sec:dless2}, we assume that $\dim(\da) = 2$ and fix a curve $C_0\in\Hd$.

\subsection{Normalizations}\label{sec:normalization}

We first give a description of $\Delta$ near a vertex of $\da$.

\begin{Definition}\label{def:normalization}
If $\kappa$ is a vertex of $\da$, a normalization of $\Delta$ at $\kappa$ is an invertible affine transformation $A:\R^2\rightarrow\R^2$ preserving the lattice and mapping $\kappa$ to $(0,0)$ and its adjacent edges in $\da$ to the edges directed by $(1,0)$ and $(0,1)$.
\end{Definition}

We will usually abuse notation and keep calling $\Delta$ the image of $\Delta$ under a noramlization.

 It follows from Proposition \ref{prop:poly} that after a normalization at $\kappa$, there are two possibilities for the polygon $\Delta$ near $\kappa$, as depicted in the Figure \ref{fig:cornercase}.

\begin{figure}[h]
\centering{
\input{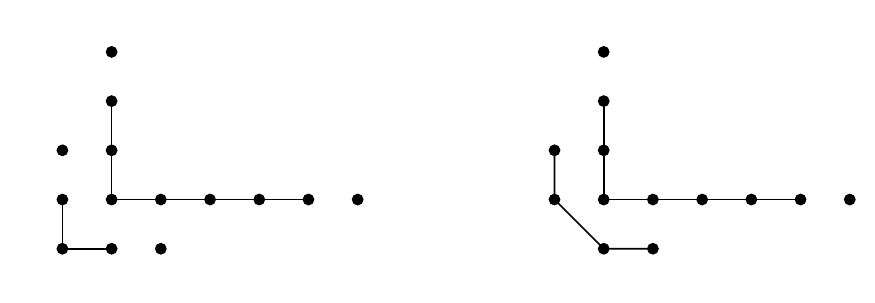_t}}
\caption{The polygon $\Delta$ near $\kappa$} \label{fig:cornercase}
\end{figure}

Indeed, since the normal fan of $\Delta$ is a refinement of that of $\da$, there is an edge of $\Delta$ supported on the line $y=-1$ and one supported on the line $x=-1$. The two cases are as follows :
\begin{itemize}
\item either those two edges meet, then we have the left case of Figure \ref{fig:cornercase};
\item or there is at least one other edge between those two. Let us call it $\rho$. Since $\rho$ has no corresponding edge in $\da$, we have  $l_{\rho}-D_{\rho}^2-2=0$. Since $\dim(\da) = 2$, this can only happen if $l_{\rho}=1$ and $D_{\rho}^2=-1$ (see \cite{Koe}, Lemma 2.3.1). The assumption that $\da$ has dimension 2 also ensures that there cannot be two consecutive edges satisfying those two equalities. Geometrically, the primitive integer vector supporting $\rho$ has to be the sum of the two primitive integer vectors supporting its adjacent edges (see \cite[\S 2.5]{Ful}). We obtain the right case of Figure \ref{fig:cornercase}. 
\end{itemize}

 After a normalization of $\Delta$ at $\kappa$, it follows from the above discussion that the points $\ak=(0,-1)$ and $\akp=(-1,0)$ always belong to $\partial \Delta$. Moreover, if the edge of $\da$ directed by $(0,1)$ (resp. by $(1,0)$) is of length $l$ (resp. of length $l'$), then the point $\ok = (0,l+1)$ (resp. $\okp = (l'+1,0)$) belongs to $\partial \Delta$.

\subsection{Constructions}

Let us start with some technical definitions. Fix a vertex $\kappa$ of $\da$ and choose a boundary lattice point $\kappa'$ of $\da$ such that $\kappa$ and $\kappa'$ are consecutive on $\partial \da$ (the orientation does not matter). Denote by $\epsilon$ the edge of $\da$ containing $\kappa'$. Choose a bridge $\sigma'$ which joins $\kappa'$ to any boundary lattice point of $\Delta$.  We aim to prove the following.

\begin{Lemma}\label{lem:propagate}
Assume that $\tau_{\sigma'}\in\im(\mu)$ (resp. $[\tau_{\sigma'}]\in\im([\mu])$). Then for any bridge $\sigma \subset \Delta$, we have $\tau_{\sigma}\in\im(\mu)$ (resp. $[\tau_{\sigma}]\in\im([\mu])$).
\end{Lemma}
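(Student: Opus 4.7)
The plan is to propagate the property ``$\tau_\sigma \in \im(\mu)$'' from $\sigma'$ to every bridge by walking around $\partial \da$ one lattice point at a time. First, Lemma \ref{lem:allbridge} reduces the problem to establishing the property at one bridge per boundary lattice point of $\da$. Vertices of $\da$ are already covered by Proposition \ref{prop:corner}, and segments between consecutive lattice points on $\partial \da$ already have their Dehn twists in $\im(\mu)$ by Proposition \ref{prop:side}. Together with the inductive hypothesis at $\kappa'$, these form the ``known'' Dehn twists to manipulate.

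The inductive step is: given a lattice point $p \in \partial \da$ where the conclusion holds and an adjacent lattice point $p' \in \partial \da$ on the same edge of $\da$, transfer the conclusion to $p'$. I would normalize $\Delta$ at the nearest vertex of $\da$ on the edge containing $p$ and $p'$ so that this edge runs along the $x$-axis and the local shape of $\Delta$ is one of the two cases of Figure \ref{fig:cornercase}. In these coordinates, I construct an explicit weighted graph $\G = (G,m)$ whose edges are a bridge $\sigma$ at $p$ (weight $\pm 1$, known by induction), a target bridge $\sigma''$ at $p'$ (weight $\pm 1$), the segment $[p,p'] \subset \partial \da$ (weight $\pm 1$, known by Proposition \ref{prop:side}), and a minimal collection of auxiliary edges going into $\itr(\da)$, each of weight $\pm 1$ and each ending at a valency-one lattice point of $\itr(\da)$. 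The weights are picked so that $\G$ is balanced at every interior vertex, while convexity and unimodularity of a supporting subdivision are secured by Lemmas \ref{lem:extend} and \ref{lem:refinement}. Then Theorem \ref{thm:rea} gives $\tau_\G \in \im(\mu)$, iterated applications of Lemma \ref{lem:chasing} peel off the auxiliary leaves (using only that their weights are $\pm 1$ and that they lie in $\itr(\Delta)$), and what remains is a relation among the known Dehn twists and $\tau_{\sigma''}$ from which $\tau_{\sigma''} \in \im(\mu)$ is extracted.

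Iterating this one-step propagation along every edge of $\da$ and hopping between consecutive edges at their common vertex (handled by Proposition \ref{prop:corner}) reaches every lattice point of $\partial \da$, since $\partial \da$ is connected, and by Lemma \ref{lem:allbridge} every bridge $\sigma \subset \Delta$ then has $\tau_\sigma \in \im(\mu)$. The main obstacle is the explicit bookkeeping of the weighted graph in the inductive step: the balance condition at $p$ and $p'$ forces the presence of auxiliary edges entering $\itr(\da)$, and one has to check in each of the two local configurations of Figure \ref{fig:cornercase} that these edges can be arranged as weight-$\pm 1$ leaves so that Lemma \ref{lem:chasing} applies and the chasing process terminates leaving nothing unknown besides $\tau_{\sigma''}$. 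The homological statement is proved by exactly the same construction, using the homological clauses that are built into Theorem \ref{thm:rea} (via the fact that $\mu$ factors through the homological monodromy) and into Lemma \ref{lem:chasing}.
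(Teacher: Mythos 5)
Your overall framework is the right one --- normalize $\Delta$, use Lemma \ref{lem:allbridge} to reduce to one bridge per point of $\partial\da$, dispose of the vertices of $\da$ via Proposition \ref{prop:corner} and of the segments of $\partial\da$ via Proposition \ref{prop:side}, then close the gap with weighted graphs, Theorem \ref{thm:rea} and Lemma \ref{lem:chasing} --- and the one-step propagation along a single edge of $\da$ is a perfectly reasonable inductive strategy (the paper instead jumps from $\kappa'$ directly to an arbitrary lattice point $\kappa''$ on the adjacent edge, then inducts over edges of $\da$). The problem is that the heart of the step, the auxiliary graph, cannot be constructed as you describe.

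You ask for ``a minimal collection of auxiliary edges going into $\itr(\da)$, each of weight $\pm1$ and each ending at a valency-one lattice point of $\itr(\da)$'', while also demanding that $\G$ be balanced at every interior vertex so that Theorem \ref{thm:rea} applies. These two conditions are incompatible: at a valency-one vertex $v\in\itr(\Delta)$ with incident edge $e$, the balancing condition \eqref{eq:balance} reads $m(e)\cdot\vec e = 0$, which forces $m(e)=0$ because $\vec e\neq 0$. So an admissible graph has no weight-$\pm1$ leaves in $\itr(\Delta)$, and after Theorem \ref{thm:rea} there is nothing for Lemma \ref{lem:chasing} to act on directly. In the paper's proof, the leaves only appear after one first \emph{divides out} the Dehn twists along known segments of the balanced graph, and it is this division --- not the graph itself --- that produces chaseable leaves. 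Concretely, the balancing condition at $p$ and $p'$ cannot be satisfied without edges propagating further into $\itr(\da)$, and those new endpoints in turn need to be balanced, forcing the auxiliary graph to grow until it terminates on $\partial\Delta$ or at vertices that are handled separately. This is exactly what the $G_{\kappa,\kappa',v}$ construction in the paper is for, and why the proof of Lemma \ref{lem:propagate} invokes Proposition \ref{prop:interior}: the Dehn twists along the interior segments of the auxiliary graph $\G$ of Figure \ref{fig:propag1} must already be known to lie in $\im(\mu)$ in order to cancel them out and leave the single leaf $\sigma$. You never mention Proposition \ref{prop:interior} (or the $G_{\kappa,\kappa',v}$ machinery and Lemma \ref{lem:diamonds} on which it rests), and without it your chasing step has no known Dehn twists to cancel against. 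This is the genuine gap; the rest of your argument (the reduction to $\partial\da$, the treatment of the homological case by the homological clauses of Lemma \ref{lem:chasing}) is fine.
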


\begin{Remark} 
In the rest of Section \ref{sec:dis2}, we will make constant use of admissible graphs in the proofs of the various statements. Most of the time, we will only show that the weighted graphs under consideration are balanced and leave the proof of the admissibility for the reader. Note that we need only to exhibit convex subdivision supporting balanced graph $(\mathcal{G},w)$ and obtain unimodular ones by lemma \ref{lem:refinement}.
\end{Remark}

\begin{proof}[Proof of Lemma \ref{lem:propagate}]
Consider the normalization of $\Delta$ at $\kappa$ mapping $\kappa'$ to $(0,1)$. The edge $\epsilon$ is then vertical.

We claim the following: if $\tau_{\sigma'}\in\im(\mu)$ and $\kappa''$ is a lattice point on the edge of $\da$ adjacent to $\epsilon$ and containing $\kappa$, then there is a bridge $\sigma$ ending at $\kappa''$ such that $\tau_{\sigma}\in\im(\mu)$.

By Lemma \ref{lem:allbridge}, we can assume without loss of generality that $\sigma'$ joins $\kappa'$ to $\akp = (-1,0)$. By Proposition \ref{prop:corner}, there is nothing to prove if $\kappa''$ is a vertex of $\da$. Therefore we can assume that $\kappa''=(a,0)$ is not a vertex. Now consider the admissible graph $\mathcal{G}$ of Figure \ref{fig:propag1}, where the horizontal edges have weight $-2a$ and the vertical ones have weight $-a-1$. By Theorem \ref{thm:rea}, we have $\tau_\mathcal{G} \in \im(\mu)$.
By Proposition \ref{prop:interior} applied to $\mathcal{G}$ and the assumption $\tau_{\sigma'}\in\im(\mu)$, Lemma \ref{lem:chasing} allows us to chase any primitive integer segment of $\mathcal{G}$ except $\sigma$ and conclude that $\tau_{\sigma}\in\im(\mu)$. This proves the claim.

By applying the above claim and Lemma \ref{lem:allbridge} to any $\kappa''$, we show that $\tau_{\sigma}\in\im(\mu)$ for any bridge ending on the edge of $\da$ adjacent to $\epsilon$ and containing $\kappa$. By induction on the edges of $\da$, it follows that $\tau_{\sigma}\in\im(\mu)$ for any bridge of $\Delta$.

The homological statement follows from the previous arguments and the fact that Lemma \ref{lem:chasing} and Proposition \ref{prop:corner} hold in homology.
\end{proof}

\begin{figure}[h]
\centering{
\input{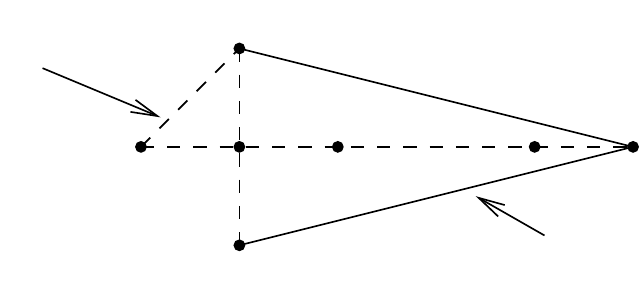_t}}
\caption{The admissible graph $\G$.}
\label{fig:propag1}
\end{figure}

\begin{Remark}\label{rem:corner2}
If $\da$ has a side of integer length 1, it follows from Lemma \ref{lem:propagate} and Proposition \ref{prop:corner} that $\tau_{\sigma}\in\im(\mu)$ for any bridge $\sigma\subset\Delta$.
\end{Remark}

In the rest of this section, we explain how to construct a Dehn twist around a loop corresponding to a primitive integer segment $\sigma$ joining any two points of $\Delta$ (see Proposition \ref{prop:interior}). To that end, we will construct an admissible graph containing $\sigma$ with multiplicity $1$ and such that the Dehn twists corresponding to the edges of the graph adjacent to $\sigma$ are in $\im(\mu)$. We will construct this graph by parts.

Let us choose again a vertex $\kappa$ of $\da$ and a boundary lattice point $\kappa'$ of $\da$ such that $\kappa$ and $\kappa'$ are consecutive on $\partial \da$. Take a point $v\in\itr(\da)\cap\Z^2$ and choose two integers $m_1$ and $m_2$. We construct two weighted graphs \linebreak $\G_{\kappa,\kappa',v}^{m_1,m_2} = (G_{\kappa,\kappa',v},m_{\kappa,\kappa',v}^{m_1,m_2})$ and $\G_{\kappa',\kappa,v}^{m_1,m_2} = (G_{\kappa',\kappa,v},  m_{\kappa',\kappa,v}^{m_1,m_2})$ associated to these data as follows.

First, consider once again the normalization of $\Delta$ at $\kappa$ mapping $\kappa'$ to $(0,1)$. Let us also recall from \S \ref{sec:normalization} that the points $\ak=(0,-1)$ and $\akp=(-1,0)$ are boundary points of $\Delta$. The set of edges of $G_{\kappa,\kappa',v}$ is the union of two subsets, as well as that of $G_{\kappa',\kappa,v}$.

 The first subset is constructed by induction. We start with $G_{\kappa,\kappa',v}$. Take $v'_0 = v$ and define $v'_1$ as follows. We call $\sigma_{1,0}$ the primitive integer segment starting at $v'_0$ and directing $[v'_0,\kappa']$. Consider the ray starting at $v'_0$ and directed by $\sigma_{1,0}$ and make it rotate counter-clockwise around $v'_0$ until it hits a lattice point in $conv(v_0',\kappa',\kappa)$. Denote by $l$ this new ray and by $\sigma_{2,0}$ the primitive integer segment directing it (see Figure \ref{fig:rotray}, on the left).

\begin{figure}[h]
\centering{
\input{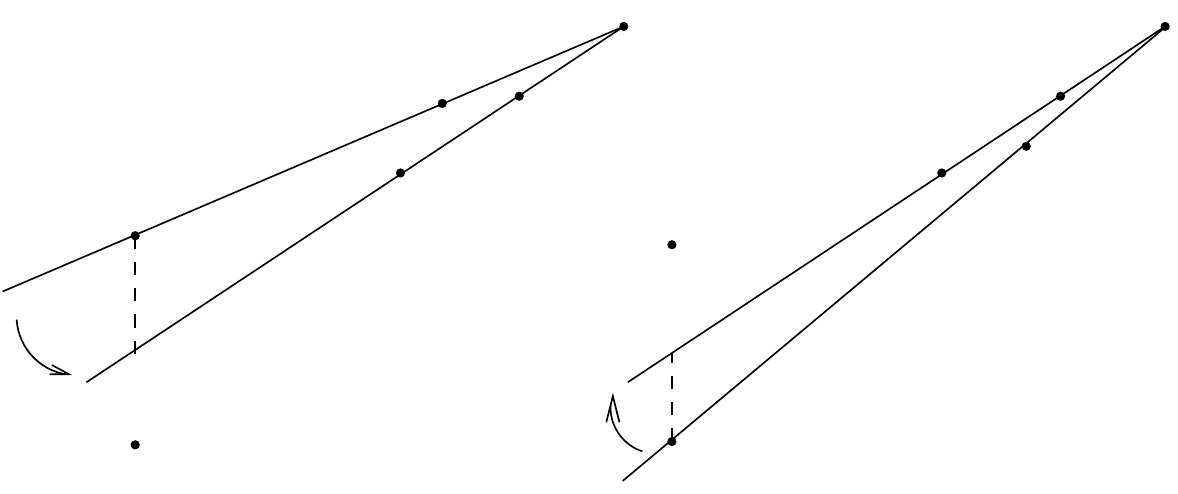_t}}
\caption{Sketch of the construction of $v'_1$ for the graphs $G_{\kappa,\kappa',v}$ (on the left) and for the graph $G_{\kappa',\kappa,v}$ (on the right). There is no lattice point inside the triangles.} \label{fig:rotray}
\end{figure}

Take $v'_{1}$ to be the lattice point in $l\cap conv(v'_0,\kappa',\kappa)$ which is the farthest away from $v'_0$. Take all the primitive integer segments on the segment $[v'_0,\kappa']$ with weight $m_{1,0} = m_1$ and all the primitive integer segments on the segment $[v'_0,v'_1]$ with weight $m_{2,0} = m_2$ to be edges of $G_{\kappa,\kappa',v}$.

Suppose we have constructed $v'_0,\ldots,v'_k$ for some $k\geq 1$ and that the edges on $[v'_{k-1},\kappa']$ are weighted with $m_{1,k-1}$ and those on $[v'_{k-1},v'_k]$ have weight $m_{2,k-1}$. If $v'_k = \kappa$ we are done. If not, replacing $v'_0$ by $v'_{k}$ in the previous case, we construct $v'_{k+1}\in conv(v'_k,\kappa',\kappa)$ and primitive integer segments $\sigma_{1,k}$ on $[v'_k,\kappa']$ and $\sigma_{2,k}$ on $[v'_k,v'_{k+1}]$. By construction, $\sigma_{1,k}$ and $\sigma_{2,k}$ generate the lattice. Thus we can find integers $m_{1,k}$ and $m_{2,k}$ such that 
\[
m_{2,k-1}\sigma_{2,k-1} + m_{1,k}\sigma_{1,k} + m_{2,k}\sigma_{2,k} = 0.
\] 
Here we identify the segments with the vectors of $\Z^2$ they induce once we orient them going out of $v'_k$. We take all the primitive integer segments on the segments $[v'_k,\kappa']$ and $[v'_k,v'_{k+1}]$ weighted respectively by $m_{1,k}$ and $m_{2,k}$ to be edges of $G_{\kappa,\kappa',v}$.

 After a finite number of steps $t\geq 1$, we obtain $v'_t = \kappa$. The graph we obtain looks like the left graph in Figure \ref{fig:sketch}.

The construction for $G_{\kappa',\kappa,v}$ is the same except we exchange the roles of $\kappa$ and $\kappa'$; that is we use the ray starting at $v$ and going through $\kappa$ to sweep-out the triangle $v,\kappa,\kappa'$, and we stop when we encounter $\kappa'$ (see Figures \ref{fig:rotray} and \ref{fig:sketch}). We use the same notations for both graphs.

\begin{figure}[h]
\centering{
\input{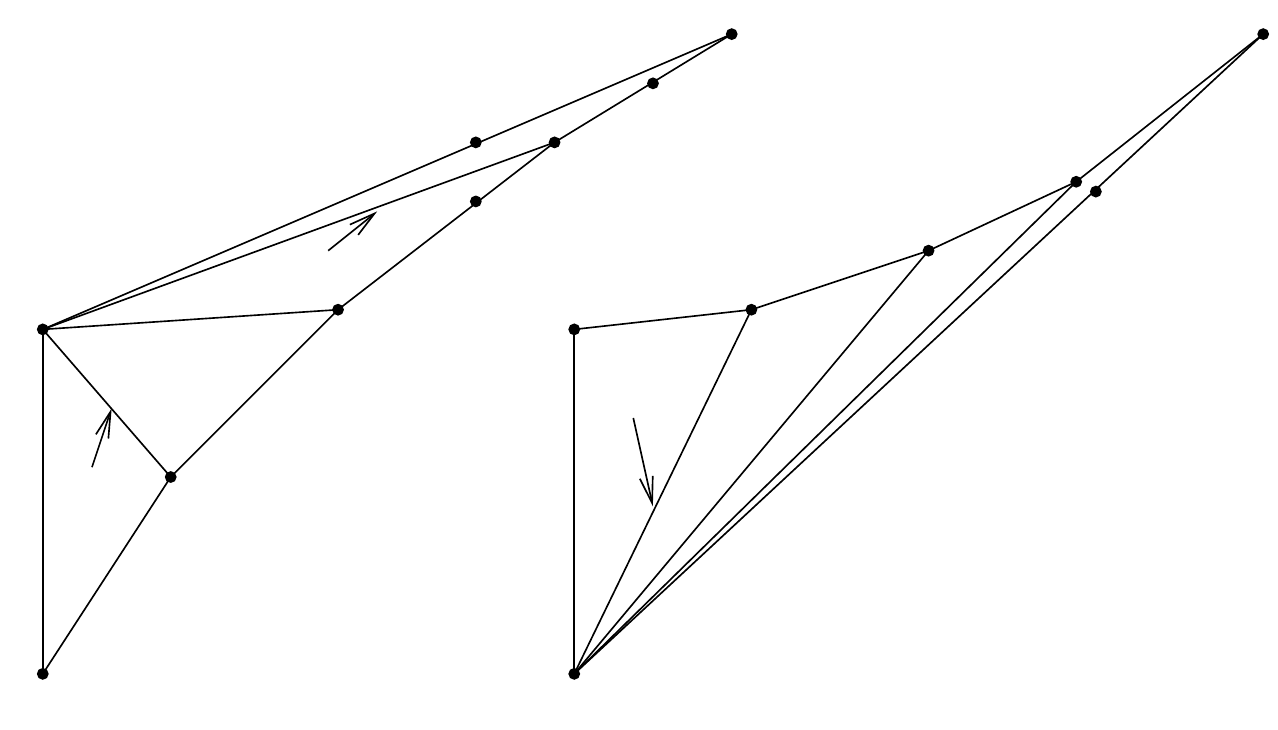_t}}
\caption{Rough sketch of the graphs $G_{\kappa,\kappa',v}$ (on the left) and $G_{\kappa',\kappa,v}$ (on the right). There is no lattice point in any of the small triangles.}\label{fig:sketch}
\end{figure}

The second set of edges for both graphs consists of the four segments $\rho_1,\ldots,\rho_4$ joining respectively $\kappa'$ to $\akp=(-1,0)$ and $\kappa$ to $\kappa'$, $\akp$ and $\ak=(0,-1)$. The first two generate the lattice. Thus we can find two integers $a_1$ and $a_2$ such that $m_{1,0}\sigma_{1,0} + \ldots + m_{1,n-1}\sigma_{1,n-1} + a_1\rho_1 + a_2\rho_2 = 0$ (where the segments are oriented going out of $\kappa'$). We set $m_{\kappa,\kappa',v}^{m_1,m_2}(\rho_1) = a_1$ and $m_{\kappa,\kappa',v}^{m_1,m_2}(\rho_2) = a_2$. The segments $\rho_3$ and $\rho_4$ also generate the lattice. Hence, we can find two integers $a_3$ and $a_4$ such that $m_{2,n-1}\sigma_{2,n-1} + a_2 \rho_2 + a_3\rho_3 + a_4\rho_4 = 0$ (where the segments are oriented going out of $\kappa$). We set $m_{\kappa,\kappa',v}^{m_1,m_2}(\rho_3) = a_3$ and $m_{\kappa,\kappa',v}^{m_1,m_2}(\rho_4) = a_4$.

Notice that by construction the graphs $G_{\kappa,\kappa',v}$ and $G_{\kappa',\kappa,v}$ do not depend on $m_1$ and $m_2$ and that the weighted graph $\G_{\kappa,\kappa',v'_k}^{m_{1,k},m_{2,k}}$ (resp. $\G_{\kappa',\kappa,v'_k}^{m_{1,k},m_{2,k}}$) is a subgraph of $\G_{\kappa,\kappa',v}^{m_1,m_2}$ (resp. $\G_{\kappa',\kappa,v}^{m_1,m_2}$) for any $0\leq k < t-1$, although the weights on the segments $\rho_1, \ldots, \rho_4$ may differ from one to the other.

We give a complete example when we take $v=(16,4)$ and weights $m_1=m_2=1$ in Figure \ref{fig:halfdiamond}.

\begin{figure}[h]
\centering{
\input{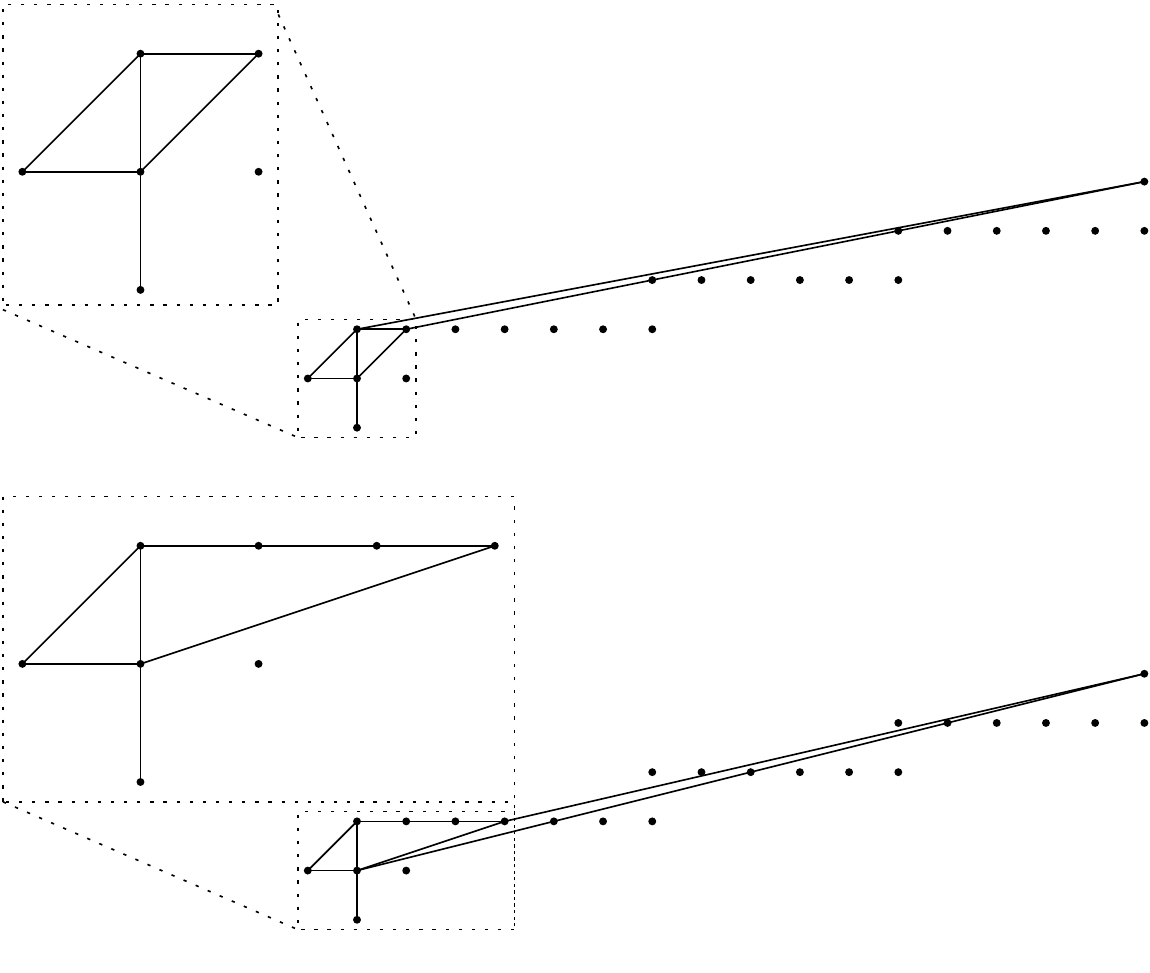_t}}
\caption{The weighted graphs $\G_{(0,0),(0,1),(16,4)}^{(1,1)}$ (above) and $\G_{(0,1),(0,0),(16,4)}^{(1,1)}$ (under)} \label{fig:halfdiamond}
\end{figure}

\begin{Lemma}\label{lem:diamonds}
 The weighted graphs $\G_{\kappa,\kappa',v}^{m_1,m_2}$ and $\G_{\kappa',\kappa,v}^{m_1,m_2}$ satisfy the following properties:
  \begin{itemize}
  \item the edges $\sigma_1 = \sigma_{1,0}$ and $\sigma_2 = \sigma_{2,0}$ generate the lattice,
  \item the weight function $m_{\kappa,\kappa',v}^{m_1,m_2}$ (resp. $m_{\kappa',\kappa,v}^{m_1,m_2}$) satisfies the balancing condition \eqref{eq:balance} at every element of $V(G_{\kappa,\kappa',v})\cap\itr\Delta$ (resp. $V(G_{\kappa',\kappa,v})\cap\itr\Delta$) except at $v$.
  \end{itemize}
Assume moreover that there is a bridge $\sigma'$ ending at $\kappa'$ such that $\tau_{\sigma'}$ is in $\im(\mu)$ (resp. $[\tau_{\sigma'}]$ is in $\im([\mu])$). Then $\tau_{\sigma_1}$ and $\tau_{\sigma_2}$ are in $\im(\mu)$ (resp. $[\tau_{\sigma_1}]$ and $[\tau_{\sigma_2}]$ are in $\im([\mu])$).
\end{Lemma}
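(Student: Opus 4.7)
The plan splits into two parts corresponding to the two bulleted structural claims and the Dehn twist conclusion. For the first bullet I would invoke the construction: $\sigma_2 = \sigma_{2,0}$ is obtained by rotating the ray from $v$ in direction $\sigma_1$ counter-clockwise through the smallest positive angle until it meets a lattice point of $\mathrm{conv}(v, \kappa', \kappa)$. This forces the open cone at $v$ spanned by $\sigma_1$ and $\sigma_2$ to contain no lattice point of $\mathrm{conv}(v, \kappa', \kappa)$; combined with the primitivity of $\sigma_1$ and $\sigma_2$, it follows that the triangle with vertices $v, v+\sigma_1, v+\sigma_2$ has no lattice point in its interior nor on its open sides, so Pick's formula yields area $1/2$ and $|\det(\sigma_1, \sigma_2)|=1$. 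For the second bullet I would check balancing vertex by vertex: at each $v'_k$ with $1 \le k \le t-1$, the weights $m_{1,k}, m_{2,k}$ are defined exactly so that \eqref{eq:balance} holds; at $\kappa'$ (resp.\ $\kappa$) the weights $a_1, a_2$ (resp.\ $a_3, a_4$) play the same role; intermediate lattice points along any chain $[v'_k, \kappa']$ or $[v'_k, v'_{k+1}]$ carry two incident edges of equal weight and opposite direction, hence are automatically balanced. The only remaining vertex is $v=v'_0$, where the residual imbalance is $m_1\sigma_1 + m_2\sigma_2$.

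For the second part of the statement, I first collect the Dehn twists already at our disposal. From the hypothesis $\tau_{\sigma'} \in \im(\mu)$ combined with Lemma \ref{lem:propagate}, every bridge $\tilde\sigma \subset \Delta$ satisfies $\tau_{\tilde\sigma} \in \im(\mu)$; in particular this handles $\tau_{\rho_1}, \tau_{\rho_3}, \tau_{\rho_4}$, since each $\rho_i$ ($i=1,3,4$) joins a boundary lattice point of $\da$ to a boundary lattice point of $\Delta$ without crossing $\itr(\da)$. Proposition \ref{prop:side} then provides $\tau_{\rho_2} \in \im(\mu)$, as $\rho_2$ is a primitive segment on an edge of $\da$. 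Finally, Theorem \ref{thm:acycle} gives $\tau_v \in \im(\mu)$. To isolate $\tau_{\sigma_1}$ I would specialise to the parameters $(m_1,m_2)=(1,0)$: the recursion then collapses all subsequent weights $m_{1,k}, m_{2,k}$ ($k \ge 1$) to zero, reducing the graph to the primitive subsegments of $[v,\kappa']$ (all of weight $1$) together with the $\rho_j$'s. To promote this almost-admissible graph to a genuine admissible graph $\hat{\mathcal{G}}$, I would adjoin at $v$ an auxiliary configuration whose net imbalance there is $-\sigma_1$ and whose additional edges have Dehn twists already established. A natural candidate is a second diamond $\mathcal{G}_{\tilde\kappa, \tilde\kappa', v}^{\cdot,\cdot}$ centred at another consecutive pair on $\partial\da$ with parameters tuned so that, using the basis $(\sigma_1, \sigma_2)$ from Part~1, the combined imbalance at $v$ vanishes. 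Theorem \ref{thm:rea} then yields $\tau_{\hat{\mathcal{G}}} \in \im(\mu)$; iterated application of Lemma \ref{lem:chasing} (peeling leaves using $\tau_v$ to chase each bridge, edge of $\partial\da$, or similarly controlled segment) strips away every edge except $\sigma_1$, leaving $\tau_{\sigma_1} \in \im(\mu)$. The symmetric choice $(m_1,m_2)=(0,1)$ delivers $\tau_{\sigma_2}$.

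The delicate step, and the main obstacle I foresee, is precisely this closing of the graph at $v$: one must cancel the residual imbalance without introducing any edge whose Dehn twist is outside $\im(\mu)$. The flexibility needed is provided by $v \in \itr(\da)$ (there is room in $\da$ for an auxiliary diamond) together with the basis property $(\sigma_1, \sigma_2) \in GL_2(\Z)$ from Part~1 (any $\Z$-linear combination can be realised this way). The homological statement follows by the exact same construction: Lemma \ref{lem:propagate}, Proposition \ref{prop:side}, Theorem \ref{thm:acycle}, Theorem \ref{thm:rea} and Lemma \ref{lem:chasing} all descend to $\spaut(H_1(C_0, \Z))$, so running the argument after post-composing with $\MCG(C_0)\rightarrow \spaut(H_1(C_0,\Z))$ yields $[\tau_{\sigma_1}], [\tau_{\sigma_2}] \in \im([\mu])$.
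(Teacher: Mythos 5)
Your handling of the two structural bullets and of $\tau_{\sigma_1}$ is essentially the paper's argument: specialize to $(m_1,m_2)=(1,0)$ so that the recursion collapses ($m_{2,0}=0$ forces $m_{1,k}=m_{2,k}=0$ for $k\geq 1$), glue a second diamond at $v$ to close off the balancing (the paper uses $G_{\xi,\xi',v}$ for the other vertex $\xi$ adjacent to $\kappa$), then chase. One remark on the chasing: you do not need to know the Dehn twists of the edges of the auxiliary diamond at all. Removing $\rho_1$ (Lemma \ref{lem:allbridge} and the assumption) and $\rho_2$ (Proposition \ref{prop:side}) leaves $\kappa'$ of valency one with a weight-$1$ edge, and Lemma \ref{lem:chasing} then peels the chain $[\kappa',v]$ step by step until $\sigma_1$ itself is produced at the valency-one vertex one lattice step away from $v$. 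No edge of the auxiliary piece is ever touched.

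The serious gap is the final sentence, ``The symmetric choice $(m_1,m_2)=(0,1)$ delivers $\tau_{\sigma_2}$.'' This symmetry does not exist. The construction of $G_{\kappa,\kappa',v}$ is asymmetric in $\sigma_{1,0}$ and $\sigma_{2,0}$: $\sigma_{1,0}$ always points towards $\kappa'$, while $\sigma_{2,0}$ is obtained by rotating the ray. With $(m_1,m_2)=(0,1)$ the relation at $v'_1$ is $\sigma_{2,0}+m_{1,1}\sigma_{1,1}+m_{2,1}\sigma_{2,1}=0$, and since $\sigma_{2,0}\neq 0$ the pair $(m_{1,1},m_{2,1})$ is nonzero; the cascade does not collapse and you are left with a branching fan of weighted edges at $\kappa'$ whose Dehn twists you do not control. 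The paper resolves this by an induction on the number of steps $t$ of the recursive construction: with weights $m^{0,1}_{\kappa,\kappa',v}$, the subgraph $G_{\kappa,\kappa',v'_1}$ is a $(t-1)$-step diamond, so the inductive hypothesis (plus the first part of this Lemma applied to $v'_1$) gives $\tau_{\sigma_{1,1}},\tau_{\sigma_{2,1}}\in\im(\mu)$; removing those two edges at $v'_1$ leaves a valency-one chain down to $v$, and chasing yields $\tau_{\sigma_2}$. Your proposal must supply this induction or an equivalent device; as written the $\sigma_2$ claim is unsupported.

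A secondary concern is your appeal to Lemma \ref{lem:propagate}. In the paper's logical order, Lemma \ref{lem:propagate} is used in the proof of Proposition \ref{prop:interior}, which in turn rests on the present Lemma; quoting Lemma \ref{lem:propagate} here risks circularity unless you check carefully that its proof is independent of Lemma \ref{lem:diamonds}. It is also unnecessary: the only bridges you actually need are $\rho_1$ at $\kappa'$ (covered by the hypothesis and Lemma \ref{lem:allbridge}) and $\rho_3,\rho_4$ at the vertex $\kappa$ (Proposition \ref{prop:corner}); $\rho_2$ lies on $\partial\da$ (Proposition \ref{prop:side}). If you attach the second diamond at a genuinely new pair $(\tilde\kappa,\tilde\kappa')$ you would need the bridge at $\tilde\kappa'$, which the hypothesis does not grant, so the choice of the auxiliary piece is constrained; the paper's choice of $G_{\xi,\xi',v}$ avoids this issue precisely because that piece need never be chased.
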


\begin{proof}
  The two properties of $\G_{\kappa,\kappa',v}^{m_1,m_2}$ and $\G_{\kappa',\kappa,v}^{m_1,m_2}$ follow from the construction.

For the second part of the Lemma,  we start with the weighted graph $\G_{\kappa,\kappa',v}^{m_1,m_2}$ and show how to adapt the proof for the other one. 

The strategy is to include the graph $G_{\kappa,\kappa',v}$ in another one called $G$ which will be supported by a unimodular convex subdivision of $\Delta$. Moreover, we will be able to put two weight functions on $G$ such that $\sigma_1$ appears with weight $1$ for one and $\sigma_2$ appears with weight $1$ for the other and such that both functions satisfy the balancing condition at every interior vertex of $G$. To conclude, we apply Theorem \ref{thm:rea} and Lemma \ref{lem:chasing}.

Let us give the details of the proof. Denote by $\xi$ the vertex of $\da$ next to $\kappa$ such that $\kappa'$ is not on $[\xi,\kappa]$ and take $\xi'$ to be the boundary point of $\da$ next to $\xi$ and outside $[\xi,\kappa]$. In the coordinates chosen previously, $\xi$ is of the form $(l,0)$ and $\xi'$ is given by $(l+s,1)$, for some integers $l$ and $s$. The points $(l-s,-1)$ and $(l+1,0)$ are boundary lattice points of $\Delta$.

 Define the graph $G$ as the union of $G_{\kappa,\kappa',v}$ and $G_{\xi,\xi',v}$ (see Figure \ref{fig:biggraph}).

\begin{figure}[h]
\centering{
\input{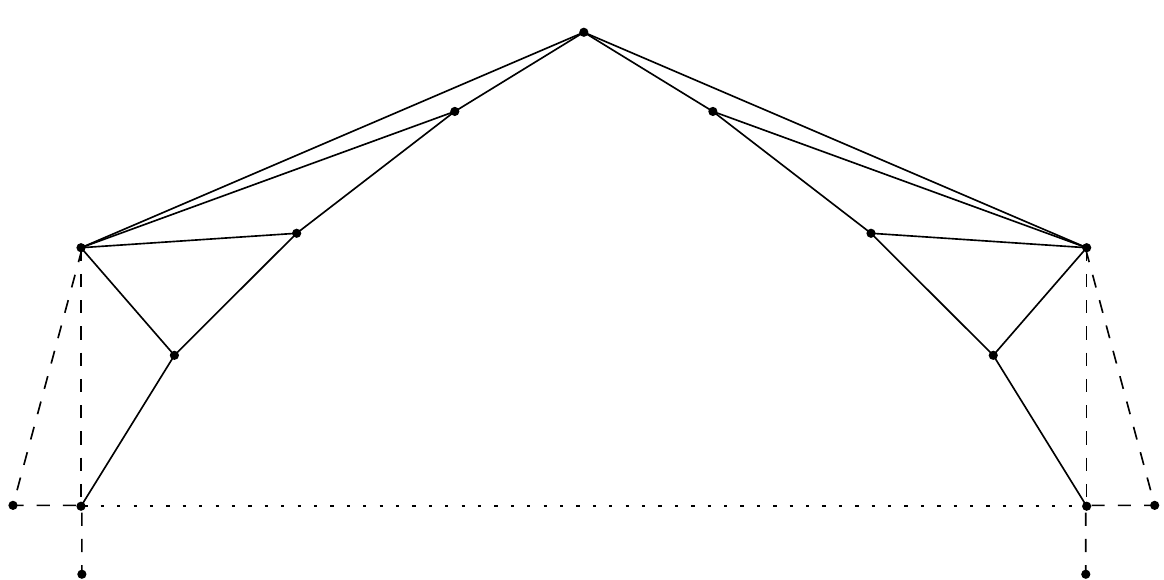_t}}
\caption{Rough sketch of the graph $G$ (with $s=0$). There is no lattice point in the small triangles. The dashed lines indicate the edges of $G$ for which we know that the associated Dehn twist is in the image of the monodromy.} \label{fig:biggraph}
\end{figure}

 Let us show that it is supported by a unimodular convex subdivision of $\Delta$. First, we construct the subdivision on the pentagon $Q$ with vertices $\kappa',\kappa,\xi,\xi',v$ which is convex. To that end, let us call $w'_i$ the counterparts of the points $v'_i$ points used in the construction of $G_{\xi,\xi',v}$ and define a function $h$ taking value $0$ on the points $v, \kappa, v'_1,\ldots,v'_{t-1}, \xi, w'_1,\ldots,w'_{p-1}$ and $1$ on $\kappa'$ and $\xi'$. The subdivision $S_h$ is a convex subdivision of $Q$ (see Definition \ref{def:subdiv}). Moreover, if we call $\nu_h : Q \rightarrow \R$ the associated convex function, $\nu_h$ is affine linear along $[\kappa',v]$ and since for $i=1,\ldots,t-1$, the angles $v'_{i-1}v'_iv'_{i+1}$ facing $\kappa'$ are all greater than $\pi$, it follows that the subdivision associated to $\nu$ contains the part of the graph $G_{\kappa,\kappa',v}$ in $Q$. In the same manner, this subdivision contains the part of the graph $G_{\xi,\xi',v}$ in $Q$.

 Extend the convex subdivision obtained on $Q$ to the polygon $conv(Q,(0,-1), \linebreak (l-s,-1))$ using Lemma \ref{lem:extend}. The edges $[\kappa,(0,-1)]$ and $[\xi,(l-s,-1)]$ being on the boundary of the polygon, the graph $G$ is still supported by the extended subdivision.

Extend the subdivision once again to the convex polygon $conv(v,\kappa',(-1,0), \linebreak (0,-1),(l-s,-1),(l+1,0),\xi')$. The subdivision we obtain does not necessarily support the graph $G$, the only possibly missing edges being $[(-1,0),\kappa]$, $[(-1,0),\kappa']$, $[(l+1,0),\xi]$ and $[(l+1,0),\xi']$. However, once we refine it using Lemma \ref{lem:refinement}, the unimodular convex subdivision of $conv(v,\kappa',(-1,0),(0,-1),(l-s,-1),(l+1,0),\xi')$ necessarily contains the whole graph $G$. Extending this last subdivision to the whole $\Delta$ and refining it, we conclude that $G$ is supported by a unimodular convex subdivision of $\Delta$.

Now, let us assume that there is a bridge $\sigma'$ ending at $\kappa'$ such that $\tau_{\sigma'}$ is in $\im(\mu)$ (resp. $[\tau_{\sigma'}]$ is in $\im([\mu])$). Using Lemma \ref{lem:allbridge} we know that $\tau_{\rho_1}$ is in $\im(\mu)$ (resp. $[\tau_{\rho_1}]$ is in $\im([\mu])$). 

We first show that $\tau_{\sigma_1}$ is in $\im(\mu)$ (resp. $[\tau_{\sigma_1}]$ is in $\im([\mu])$). Let us call $\eta_1$ and $\eta_2$ the two edges of $G_{\xi,\xi',v}$ going out of $v$. Since $\eta_1$ and $\eta_2$ generate the lattice, take $m_1$ and $m_2$ such that $\sigma_1 + m_1\eta_1 + m_2 \eta_2 = 0$ (the vectors being oriented going out of $v$). Let $m : E(G)\rightarrow \Z$ be the sum of $m^{1,0}_{\kappa,\kappa',v}$ and $m^{m_1,m_2}_{\xi,\xi',v}$ (both extended by $0$ where they are not defined) and define the weighted graph $\G = (G,m)$. It is admissible so from Theorem \ref{thm:rea} we know that $\tau_{\G}\in\im(\mu)$. From Proposition \ref{prop:side}, we have $\tau_{\rho_2}\in\im(\mu)$. By assumption, we also know that $\tau_{\rho_1}\in\im(\mu)$ (resp. $[\tau_{\rho_1}]\in\im([\mu])$). Applying Lemma \ref{lem:chasing} multiple times, we obtain that $\tau_{\sigma_1}$ is in $\im(\mu)$ (resp. $[\tau_{\sigma_1}]$ is in $\im([\mu])$).

We now show that $\tau_{\sigma_2}$ is in $\im(\mu)$ (resp. $[\tau_{\sigma_2}]$ is in $\im([\mu])$). We proceed by induction on the number $t$ of steps required to construct $G_{\kappa,\kappa',v}$. Let us first assume that $t=1$. Take $m_1$ and $m_2$ such that $\sigma_2 + m_1\eta_1 + m_2 \eta_2 = 0$ (the vectors being oriented going out of $v$). Let $m : E(G)\rightarrow \Z$ be the sum of $m^{0,1}_{\kappa,\kappa',v}$ and $m^{m_1,m_2}_{\xi,\xi',v}$ and define the weighted graph $\G=(G,m)$. It is admissible so from Theorem \ref{thm:rea} we know that $\tau_{\G}\in\im(\mu)$. From Proposition \ref{prop:corner}, we have $\tau_{\rho_3},\tau_{\rho_4}\in\im(\mu)$. Applying Lemma \ref{lem:chasing} multiple times, we obtain that $\tau_{\sigma_2}$ is in $\im(\mu)$ (resp. $[\tau_{\sigma_2}]$ is in $\im([\mu])$).

Assume that $t>1$ and that the result is true for $t-1$. Once again, take $m_1$ and $m_2$ such that $\sigma_2 + m_1\eta_1 + m_2 \eta_2 = 0$ (the vectors being oriented going out of $v$). Let $m : E(G)\rightarrow \Z$ be the sum of $m^{0,1}_{\kappa,\kappa',v}$ and $m^{m_1,m_2}_{\xi,\xi',v}$ and define the weighted graph $\G=(G,m)$. It is admissible so from from Theorem \ref{thm:rea} we know that $\tau_{\G}\in\im(\mu)$. Since the graph $G_{\kappa,\kappa',v'_1}$ is constructed in $t-1$ steps we use the induction assumption and the first part of the Lemma to deduce that $\tau_{\sigma_{1,1}}, \tau_{\sigma_{2,1}}\in\im(\mu)$ (resp. $[\tau_{\sigma_{1,1}}], [\tau_{\sigma_{2,1}}]\in\im([\mu])$). Applying Lemma \ref{lem:chasing}, we obtain that $\tau_{\sigma_2}$ is in $\im(\mu)$ (resp. $[\tau_{\sigma_2}]$ is in $\im([\mu])$).

We conclude by induction on $t$ that $\tau_{\sigma_2}$ is in $\im(\mu)$ (resp. $[\tau_{\sigma_2}]$ is in $\im([\mu])$).

The case of the graph $\G_{\kappa',\kappa,v}^{m_1,m_2}$ is very similar. The graph $G$ differs a little to ensure that it is supported by a convex subdivision. Indeed, there are two cases.

\begin{itemize}
\item The first one is if $\kappa'$ is not itself a vertex of $\da$. Then we denote by $\xi$ the vertex of $\da$ next to $\kappa$ such that $\kappa'$ is on $[\xi,\kappa]$ and take $\xi'$ to be the boundary point of $\da$ next to $\xi$ and outside $[\xi,\kappa]$. In the coordinates chosen previously, $\xi$ is of the form $(0,l)$ and $\xi'$ is given by $(1,l+s)$, for some integers $l$ and $s$. The points $(-1,l-s)$ and $(0,l+1)$ are boundary lattice points of $\Delta$.

\item In the second case, $\kappa'$ is a vertex of $\da$ and we take $\xi\neq\kappa$ the vertex of $\da$ next to $\kappa'$ and $\xi'$ the boundary point of $\da$ next to $\xi$ and not in $[\xi, \kappa']$.
 \end{itemize}

In both cases, the graph $G$ is the union of $G_{\kappa',\kappa,v}$ and $G_{\xi,\xi',v}$. The rest of the proof follows the same lines as for $\G_{\kappa,\kappa',v}^{m_1,m_2}$ so we omit it.
\end{proof}

We can now use the graphs we constructed to show the following proposition which provides us with a lot of Dehn twists in the image of the monodromy.

\begin{Proposition}\label{prop:interior}
  Let $\sigma'$ be a bridge ending at $\kappa'$. Assume that $\tau_{\sigma'}\in\im(\mu)$ (resp. $[\tau_{\sigma'}]\in\im([\mu])$). Then for any primitive integer segment $\sigma$ joining two points of $\Delta\cap \Z^2$ not both on the boundary of $\Delta$, we have $\tau_{\sigma}\in\im(\mu)$ (resp. $[\tau_{\sigma}]\in\im([\mu])$).
\end{Proposition}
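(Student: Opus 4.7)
The strategy is to construct, for each primitive segment $\sigma$ of the statement, an admissible weighted graph containing $\sigma$ with weight $\pm 1$ such that applying Theorem \ref{thm:rea} and iterating Lemma \ref{lem:chasing} reduces us to $\tau_\sigma \in \im(\mu)$ (respectively $[\tau_\sigma] \in \im(\mmu)$). The main building blocks are the diamond graphs $\G_{\kappa,\kappa',v}^{m_1,m_2}$ and $\G_{\kappa',\kappa,v}^{m_1,m_2}$ of Lemma \ref{lem:diamonds}, applied at various interior lattice points $v$ and various pairs $(\kappa,\kappa')$ of consecutive boundary lattice points of $\da$.

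First I would upgrade the hypothesis: from the single bridge $\sigma'$ ending at $\kappa'$, I would deduce that a bridge whose Dehn twist lies in $\im(\mu)$ exists at every boundary lattice point of $\da$. Traversing $\partial \da$ one lattice step at a time, at each step I would insert a diamond graph at a nearby interior vertex, appeal to Proposition \ref{prop:side} for segments lying on $\partial \da$ and to Proposition \ref{prop:corner} whenever a vertex of $\da$ is reached, and then chase the resulting admissible graph with Lemma \ref{lem:chasing} until only a new bridge edge survives. This is essentially the content of Lemma \ref{lem:propagate}, reproved directly inside the present argument (the apparent circularity of the exposition is avoided because this step only invokes Lemma \ref{lem:diamonds} and Propositions \ref{prop:corner} and \ref{prop:side}).

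With bridges secured at every boundary lattice point of $\da$, I would treat $\sigma$ itself. If $\sigma \subset \partial \da$, Proposition \ref{prop:side} suffices. If both endpoints $v_1,v_2$ lie in $\da \cap \Z^2$ and the ray from $v_1$ through $v_2$ meets a lattice point $\kappa' \in \partial \da$, then $\sigma$ coincides with the first-step edge $\sigma_{1,0}$ of $\G_{\kappa,\kappa',v_1}^{1,0}$ for any adjacent vertex $\kappa$ of $\da$, and Lemma \ref{lem:diamonds} together with the bridge secured above gives $\tau_\sigma \in \im(\mu)$ directly. Otherwise I would pick $\kappa,\kappa'$ so that $\sigma$ appears as the rotated edge $\sigma_{2,0}$ of $\G_{\kappa,\kappa',v_1}^{m_1,m_2}$ or $\G_{\kappa',\kappa,v_1}^{m_1,m_2}$, or superpose several diamond graphs so that after cancellation and chasing only $\sigma$ remains. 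Finally, when $v_2 \in \partial \Delta$ with $v_1 \in \itr(\da)$, I would build a graph modelled on the one used for Figure \ref{fig:propag1}, assembled from diamond edges, segments along $\partial \da$, and already-established bridges, and again isolate $\tau_\sigma$ by Lemma \ref{lem:chasing}.

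The main obstacle is the last sub-case of the interior-interior situation: showing that every primitive direction at $v_1$ can be realised as a chase-able edge of an admissible graph. The essential combinatorial input is that $\sigma_{1,0}$ and $\sigma_{2,0}$ generate $\Z^2$, so any primitive direction at $v_1$ can be expressed as an integer combination of them; balancing at $v_1$ then fixes the weights on the two diamond edges, and Lemmas \ref{lem:extend} and \ref{lem:refinement} ensure that the resulting weighted graph extends to an admissible one, to which Theorem \ref{thm:rea} applies. The homological statement follows by the same argument since all the ingredients (Lemmas \ref{lem:chasing}, \ref{lem:allbridge}, \ref{lem:diamonds}, and Propositions \ref{prop:corner}, \ref{prop:side}) pass to the image $\im(\mmu)$.
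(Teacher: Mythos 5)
Your overall plan---build an admissible graph containing $\sigma$ with weight $\pm1$, apply Theorem~\ref{thm:rea}, then remove the other edges via Lemma~\ref{lem:chasing}---is indeed the paper's strategy, and you correctly identified that the paper's diamond graphs $\G_{\kappa,\kappa',v}^{m_1,m_2}$ are the right building blocks. You also noticed the uncomfortable cross-reference between Lemma~\ref{lem:propagate} and Proposition~\ref{prop:interior}, which is worth flagging. But the step you single out as ``the main obstacle'' is precisely where the argument, as you wrote it, breaks.

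In the case where both endpoints $v_1,v_2$ of $\sigma$ lie in $\itr(\da)$, you propose to form a weighted graph consisting of $\sigma$ together with the two first-step edges $\sigma_{1,0},\sigma_{2,0}$ of a single diamond at $v_1$, with weights chosen so that the balancing equation holds at $v_1$. This graph is not balanced at $v_2$: the only edge of your graph incident to $v_2$ is $\sigma$ itself, and since $v_2\in\itr(\Delta)$ the condition \eqref{eq:balance} at $v_2$ then reads $1\cdot\vec\sigma=0$, which is false. An unbalanced graph is not admissible by definition, and neither Lemma~\ref{lem:extend} nor Lemma~\ref{lem:refinement} can repair that: those lemmas extend or refine \emph{convex subdivisions} of $\Delta$, i.e.\ they help you fulfill the second requirement of admissibility (being a subgraph of a unimodular convex subdivision), but they do nothing for the balancing condition, which must be arranged by hand. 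The paper's proof handles this by attaching a \emph{second} diamond $G_{\xi,\xi',w}$ at the other endpoint $w=v_2$ (choosing $\xi,\xi'$ so that the relevant pentagon $\kappa,v,w,\xi',\xi$ is convex, hence supported by a convex subdivision), balancing independently at $v$ with $\sigma+m_1\sigma_1+m_2\sigma_2=0$ and at $w$ with $\sigma+m_1'\eta_1+m_2'\eta_2=0$, and then chasing both diamonds away using Lemma~\ref{lem:diamonds} and Lemma~\ref{lem:chasing}. You hint at this with the phrase ``superpose several diamond graphs so that after cancellation and chasing only $\sigma$ remains,'' but that sentence is not followed through, and the concrete construction you give in the final paragraph only involves one diamond.

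Two smaller issues. First, your sub-case ``the ray from $v_1$ through $v_2$ meets a lattice point $\kappa'\in\partial\da$'' only produces a usable diamond if $\kappa'$ is adjacent to a \emph{vertex} $\kappa$ of $\da$ on $\partial\da$; for $\kappa'$ in the interior of a long edge of $\da$ the construction of $\G_{\kappa,\kappa',v}$ is not available, so this sub-case does not cover as much ground as you suggest. Second, you do not address the case where one endpoint of $\sigma$ is on $\partial\da$ but not a vertex and not on $\partial\Delta$, which the paper treats separately by replacing one diamond with a ray along $\partial\da$ together with a bridge. Once both diamonds are in place the rest of your argument (using that $\sigma_{1,0},\sigma_{2,0}$ generate $\Z^2$, Lemmas~\ref{lem:extend}--\ref{lem:refinement} to get a unimodular convex subdivision supporting the graph, Theorem~\ref{thm:rea}, and Lemma~\ref{lem:chasing}) does match the paper.
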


\begin{proof}
  Let us first note that using Lemma \ref{lem:propagate}, we may move the vertex $\kappa$ without changing the assumptions of the Proposition.

Let $v$ and $w$ in $\Delta\cap\Z^2$ be the two ends of any primitive integer segment $\sigma$. Let us first assume that both $v$ and $w$ are in $\itr(\da)\cap\Z^2$. Up to renaming $v$ and $w$, we may choose two consecutive vertices  $\kappa$ and $\xi$ of $\da$ and boundary points $\kappa'$ on $[\kappa,\xi]$ next to $\kappa$ and $\xi'$ next to $\xi$ outside of $[\kappa,\xi]$ such that the pentagon $\kappa,v,w,\xi',\xi$ is convex with its vertices arranged in this order. It can happen that $\kappa' = \xi$. Consider the graph $G$ to be the reunion of the graphs $G_{\kappa',\kappa,v}$ and $G_{\xi,\xi',w}$ and the segment $\sigma$ (see Figure \ref{fig:biggraphg}). It can be shown as in the proof of Lemma \ref{lem:diamonds} that $G$ is supported by a unimodular convex subdivision.

\begin{figure}[h]
\centering{
\input{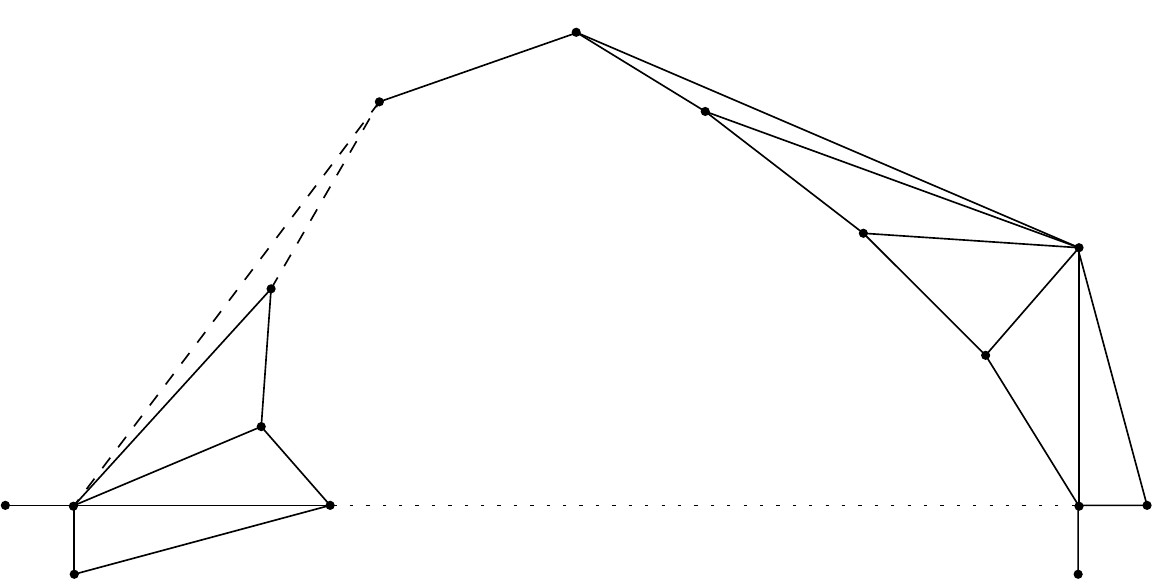_t}}
\caption{Rough sketch of the graph $G$. There is no lattice point in the small triangles. The dashed lines indicate the edges of $G$ for which we know that the associated Dehn twist is in the image of the monodromy.} \label{fig:biggraphg}
\end{figure}

Let $\sigma_1$ and $\sigma_2$ (resp. $\eta_1$ and $\eta_2$) be the primitive integer segments in $G_{\kappa',\kappa,v}$ (resp. in $G_{\xi,\xi',w}$) going out of $v$ (resp. out of $w$). Since $\sigma_1$ and $\sigma_2$ generate the lattice, choose $m_1$ and $m_2$ such that $\sigma+ m_1\sigma_1+m_2\sigma_2 = 0$ (where the orientations go out of $v$). In the same manner, choose $m'_1$ and $m'_2$ such that $\sigma+ m'_1\eta_1+m'_2\eta_2 = 0$ (where the orientations go out of $w$). Let $m: E(G)\rightarrow \Z$ be the sum of $m^{m_1,m_2}_{\kappa',\kappa,v}$, $m^{m'_1,m'_2}_{\xi,\xi',w}$ and the function equal to $1$ on $\sigma$. Define the weighted graph $\G = (G,m)$. It is admissible so from Theorem \ref{thm:rea}, we know that $\tau_{\G}\in\im(\mu)$.

 Assume that $\tau_{\sigma'}\in\im(\mu)$ (resp. $[\tau_{\sigma'}]\in\im([\mu])$). We know from Lemma \ref{lem:diamonds} that $\tau_{\sigma_1},\tau_{\sigma_2}\in\im(\mu)$ (resp. $[\tau_{\sigma_1}],[\tau_{\sigma_2}]\in\im([\mu])$). Using Lemma \ref{lem:chasing} once, we obtain that $\tau_{\sigma}\in\im(\mu)$ (resp. $[\tau_{\sigma}]\in\im([\mu])$).

If one of the two ends of $\sigma$, say $v$ is on the boundary of $\da$, take $\xi\neq v$ to be an end of the edge of $\da$ containing $v$ and $\xi'$ a boundary point of $\da$ next to $\xi$ and outside this face. In the construction of the graph $G$, we replace the graph $G_{\kappa,\kappa',v}$ by a graph consisting of the ray starting at $v$ and going through $\xi$ and any bridge ending at $v$.

By Proposition \ref{prop:side}, we can assume that the ends of $\sigma$ are not on the same edge of $\da$. Take $\kappa \neq v,w$ and $\xi\neq v,w$ to be the ends of the edges of $\da$ containing respectively $v$ and $w$ and such that $\kappa$ and $\xi$ are on the same side of the line joining $v$ and $w$. It can happen that $\kappa = \xi$. In the construction of the graph $G$, we replace the graphs $G_{\kappa,\kappa',v}$ and $G_{\xi,\xi',w}$  respectively by a graph consisting of the ray starting at $v$ and going through $\kappa$ and any bridge ending at $v$ and the graph consisting of the ray starting at $w$ and going through $\xi$ and any bridge ending at $w$.

If one of the two ends of $\sigma$, say $v$ is on the boundary of $\Delta$, we do not need to balance the graph at $v$ so we remove the part of $G$ coming out of $v$ in the preceding construction.

In all those cases, one can check that the new graph $G$ is still supported by a unimodular convex subdivision and that it can be balanced with a weight $1$ on $\sigma$ to obtain an admissible weighted graph $\G$. The rest of the argument follows as in the first case.

Once again, the proof of the homological statement is the same as the one above so we so we omit it.
\end{proof}

\subsection{$K_{\Xd}\otimes \Li$ prime}\label{sec:prime}

In this section,  we prove the second case of Theorem \ref{thm:main1}, that is when $d=2$ and $n=1$. One of the corner-stones of the proof is the following.

\begin{Proposition}\label{prop:gcdedges}
Let $s := \gcd \big( \left\lbrace l_\epsilon \, \big| \, \forall \, \epsilon \text{ edge of } \da \right\rbrace \big)$. Then for any bridge $\sigma \subset \Delta$, we have $(\tau_\sigma)^{s} \in \im(\mu)$ .
\end{Proposition}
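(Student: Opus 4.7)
The plan is to proceed in two stages that parallel Lemma \ref{lem:propagate} (propagation) and Propositions \ref{prop:corner}, \ref{prop:side} (base case), but now tracking everything modulo $s$-th powers. First I would show that it suffices to establish $\tau_{\sigma'}^s\in\im(\mu)$ for a single bridge $\sigma'$ at one chosen boundary lattice point $\kappa'$ of $\da$: reusing the admissible graph of Figure \ref{fig:propag1} exactly as in the proof of Lemma \ref{lem:propagate}, and scaling the relevant weights by $s$, the chasing via Lemma \ref{lem:chasing} together with Propositions \ref{prop:corner} and \ref{prop:side} (both unconditional) propagates $\tau_{\sigma'}^s$ to $\tau_\sigma^s$ for any bridge $\sigma\subset\Delta$. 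The argument at the homological level $[\mu]$ is entirely analogous.

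For the base case, I would build an admissible weighted graph $\mathcal{G}$ extending the constant-weight boundary graph used in Proposition \ref{prop:side}, whose twist already lies in $\im(\mu)$ and decomposes into known Dehn twists. At a chosen non-vertex lattice point $\kappa'$ on an edge $\epsilon$ of $\partial\da$ (so that $s\mid l_\epsilon$) I would splice in a diamond as in Lemma \ref{lem:diamonds} rooted at some $v\in\itr(\da)\cap\Z^2$ and terminating near $\kappa'$; this couples the weight of a newly introduced bridge at $\kappa'$ to the weight of the diamond through the balance equation at $\kappa'$. Applying Theorem \ref{thm:rea} to $\mathcal{G}$ and then chasing, via Lemma \ref{lem:chasing}, the interior edges of the diamond (using the A-cycle twists $\tau_v$ of Theorem \ref{thm:acycle}), the bridges at vertices of $\da$ (Proposition \ref{prop:corner}), and the segments along $\partial\da$ (Proposition \ref{prop:side}), one eliminates every edge except the bridge $\sigma'$, leaving $\tau_{\sigma'}^n\in\im(\mu)$ for some integer $n$. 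The arithmetic content of Proposition \ref{prop:rootandlength}, identifying $s=\gcd(l_\epsilon)$ with the largest order of a root of $K_\Xd\otimes\Li$, is what allows the balance constraints to close around $\partial\da$ with $n$ equal to (a scaling yielding) exactly $s$.

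The main obstacle is the bookkeeping in the base case: one has to pick the diamond (its root $v$, the path from $v$ to $\kappa'$ in the sense of Lemma \ref{lem:diamonds}, and the internal weights $m_1,m_2$) so that the cascading balance constraints at the successive vertices of $\da$ are simultaneously satisfied and so that the resulting graph sits inside a unimodular convex subdivision of $\Delta$ via Lemmas \ref{lem:extend} and \ref{lem:refinement}. The delicate arithmetic point, which is ultimately what forces the exponent $s$ rather than some strictly larger multiple, is verifying that propagating the cascade all the way around $\partial\da$ yields an imbalance at $\kappa'$ whose minimal positive value under scaling is exactly $\gcd(\{l_\epsilon\})=s$.
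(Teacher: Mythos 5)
Your two-stage outline (produce $\tau_{\sigma'}^{s}$ at one bridge, then propagate) identifies the right targets, but the crucial arithmetic step is asserted rather than proved, and the mechanism you describe cannot deliver it. Chasing a single admissible graph $\G$ down to a lone bridge produces $\tau_{\sigma'}^{n}\in\im(\mu)$ for \emph{one} specific integer $n$ forced by the balance equations of that graph; the gcd of the edge lengths of $\da$ cannot appear this way. It only emerges by combining several such powers using the subgroup arithmetic $\tau^{a},\tau^{b}\in\im(\mu)\Rightarrow\tau^{\gcd(a,b)}\in\im(\mu)$, iterated around $\partial\da$. Your last paragraph correctly flags this as "the delicate arithmetic point," but then restates the conclusion ("yields an imbalance at $\kappa'$ whose minimal positive value under scaling is exactly $\gcd(\{l_\epsilon\})=s$") in place of a proof; that is precisely what the proposition asserts. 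The paper instead isolates two transfer lemmas for this purpose: Lemma \ref{lem:gcd1} carries a known bridge twist across a vertex of $\da$ at the cost of the exponent ratio $m/(m\wedge l)$, and Lemma \ref{lem:gcd2} propagates a power $m\leq\min\{l_{\epsilon_1},l_{\epsilon_2}\}$ between adjacent edges. Combined with the graph $\G''$ of Figure \ref{fig:gcdedges}, which gives $\tau_{\sigma''}^{l_{\epsilon_1}}\in\im(\mu)$ outright, and an induction over consecutive edges of $\da$, these accumulate the full gcd. None of this Bézout-style combination appears in your sketch.

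Stage~1 also has an unverified step: the chasing in the proof of Lemma \ref{lem:propagate} is justified by Proposition \ref{prop:interior}, whose hypothesis is $\tau_{\sigma'}\in\im(\mu)$ and not $\tau_{\sigma'}^{s}\in\im(\mu)$. You claim Propositions \ref{prop:corner} and \ref{prop:side} alone suffice once weights are scaled by $s$, but this requires checking that the graph of Figure \ref{fig:propag1} contains no interior segments beyond corner bridges and edges of $\partial\da$; the paper sidesteps the issue by proving $m$-power propagation as its own statement (Lemma \ref{lem:gcd2}) rather than by scaling the exponent-one argument.
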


In order to prove the latter statement, we will need the following lemmas.

\begin{Lemma}\label{lem:gcd1}
Let $\kappa$ be a vertex of $\da$ and $\epsilon_1, \epsilon_2 \subset \da$ the edges adjacent to $\kappa$. Take $\sigma \subset \Delta$ to be a bridge ending at the point of $\epsilon_1$ at integer distance $m$ of $\kappa$ and $\sigma' \subset \Delta$ to be a bridge ending at the point of $\epsilon_2$ at integer distance $l$ of $\kappa$, for some $m,l\geq 1$.

 Assume that $\tau_{\sigma}\in\im(\mu)$. Then we have $(\tau_{\sigma'})^{\frac{m}{m\wedge l}} \in \im (\mu)$.
\end{Lemma}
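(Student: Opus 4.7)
The plan is to exhibit an admissible weighted graph that realises $\tau_{\sigma'}^{m/(m\wedge l)}$ modulo Dehn twists already known to lie in $\im(\mu)$, then apply Theorem~\ref{thm:rea}. First I would normalize $\Delta$ at $\kappa$ via Definition~\ref{def:normalization}, placing $\kappa=(0,0)$ with $\epsilon_1\subset\{y=0\}$ and $\epsilon_2\subset\{x=0\}$; then $\sigma$ is a bridge at $(m,0)$ and $\sigma'$ a bridge at $(0,l)$, and by Lemma~\ref{lem:allbridge} I may take the canonical representatives $\sigma=[(m,0),(m,-1)]$ and $\sigma'=[(0,l),(-1,l)]$. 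Writing $d=m\wedge l$, $m'=m/d$ and $l'=l/d$, the segment from $(m,0)$ to $(0,l)$ splits into $d$ primitive integer segments $\delta_1,\ldots,\delta_d$ of common direction $(-m',l')$.

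The core construction is the weighted graph $\mathcal{G}=(G,w)$ supported on the triangle $T$ with vertices $\kappa,(m,0),(0,l)$, together with $\sigma$, $\sigma'$ and the two bridges $[\kappa,(-1,0)]$, $[\kappa,(0,-1)]$, with weights assigned as follows: $-m'$ on each primitive segment of $\epsilon_1\cap T$ and on $[\kappa,(-1,0)]$; $-l'$ on each primitive segment of $\epsilon_2\cap T$ and on $[\kappa,(0,-1)]$; $1$ on each $\delta_i$; $l'$ on $\sigma$; and $m'$ on $\sigma'$. A direct check verifies \eqref{eq:balance}: at $\kappa$ the horizontal and vertical contributions cancel separately; at the two corners $(m,0)$ and $(0,l)$ it reduces to the identity $(-m')(-1,0)+(-m',l')+l'(0,-1)=(0,0)$ and its symmetric twin; and interior points of the three sides of $T$ balance automatically. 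Admissibility is then obtained by producing a convex subdivision containing $T$ and its diagonal and applying Lemmas~\ref{lem:extend} and~\ref{lem:refinement}.

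Theorem~\ref{thm:rea} gives $\tau_{\mathcal{G}}\in\im(\mu)$, and since $G$ lies in a unimodular convex subdivision the factors of $\tau_{\mathcal{G}}$ pairwise commute. Using Proposition~\ref{prop:side} for all segments along $\epsilon_1$ and $\epsilon_2$, Proposition~\ref{prop:corner} for the two bridges at $\kappa$, and the hypothesis $\tau_\sigma\in\im(\mu)$ to remove $\tau_\sigma^{l'}$, all these factors can be divided out of $\tau_{\mathcal{G}}$, leaving
\[
\tau_{\sigma'}^{m'}\,\prod_{i=1}^{d}\tau_{\delta_i}\ \in\ \im(\mu).
\]

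The main obstacle will be to show $\prod_{i=1}^{d}\tau_{\delta_i}\in\im(\mu)$: each internal diagonal vertex $(m-im',il')$ has degree two in $G$, so Lemma~\ref{lem:chasing} does not apply directly to the $\delta_i$. I would handle this by constructing an auxiliary admissible graph realising $\prod_i\tau_{\delta_i}$ via the $\mathcal{G}_{\kappa,\kappa',v}^{m_1,m_2}$ construction preceding Lemma~\ref{lem:diamonds}, with $v$ taken successively to be the internal vertices of the diagonal: the outgoing edges $\sigma_{1,0},\sigma_{2,0}$ produced by that construction then coincide with consecutive $\delta_i$'s, and the hypothesis of Lemma~\ref{lem:diamonds} is supplied by a bridge at a vertex of $\da$ (available from Proposition~\ref{prop:corner}) possibly combined with Lemma~\ref{lem:propagate}. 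Granting this, the displayed identity yields $\tau_{\sigma'}^{m/(m\wedge l)}\in\im(\mu)$. The homological version follows verbatim, since every tool invoked above admits a homological counterpart.
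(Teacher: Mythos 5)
Your graph, weights, and balancing check match the paper's construction for this lemma, and the first half of the argument (Theorem~\ref{thm:rea}, then dividing out the $\epsilon_1$- and $\epsilon_2$-segments via Proposition~\ref{prop:side}, the two bridges at $\kappa$ via Proposition~\ref{prop:corner}, and $\tau_\sigma^{l'}$ via the hypothesis, all legitimate because the $\delta_\sigma$'s in one unimodular subdivision are pairwise disjoint) is exactly right and yields $\tau_{\sigma'}^{m'}\prod_{i=1}^{d}\tau_{\delta_i}\in\im(\mu)$.

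The gap is in your treatment of $\prod_i\tau_{\delta_i}$. You observe that the internal diagonal vertices have degree two \emph{in the original graph $G$} and conclude that Lemma~\ref{lem:chasing} is unavailable; but that lemma does not require you to chase from $G$. It applies to any weighted graph $\G$ with $\tau_\G\in\im(\mu)$, and the relevant graph here is the \emph{reduced} one whose edges are $\sigma'$ (weight $m'$) and $\delta_1,\ldots,\delta_d$ (weight $1$ each). In this reduced graph the endpoint $(m,0)$ is an interior lattice point of $\Delta$ of valency one, with adjacent edge $\delta_1$ of weight $1$, so Lemma~\ref{lem:chasing} removes $\delta_1$; then $(m-m',l')$ has valency one and one removes $\delta_2$; iterating strips all $\delta_i$ and leaves $\tau_{\sigma'}^{m'}\in\im(\mu)$, which is exactly what the paper does (``using Proposition~\ref{prop:side} and Lemma~\ref{lem:chasing}, we can chase any primitive integer segment~$\ldots$ except $\sigma'$''). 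Your proposed detour through Lemma~\ref{lem:diamonds} and Lemma~\ref{lem:propagate} is not only unnecessary but would not close the argument: both require as input a bridge ending at a lattice point of $\partial\da$ \emph{adjacent} to a vertex whose Dehn twist is already known to lie in $\im(\mu)$, and that is precisely the sort of statement one is trying to establish at this stage; Proposition~\ref{prop:corner} only gives bridges ending \emph{at} vertices, which does not feed those lemmas. So as written the last step fails, but it is repaired by dropping the detour and applying Lemma~\ref{lem:chasing} from $(m,0)$.
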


\begin{proof}
Using a normalization of $\Delta$ at $\kappa$ if necessary, we can assume that $\kappa=(0,0)$, $\epsilon_1$ is supported by the horizontal axis and that $\sigma$ ends at $(m,0)$. By Lemma \ref{lem:allbridge}, we can assume without loss of generality that $\sigma$ starts at $(-1,0)$. Consider the admissible graph $\mathcal{G}$ of Figure \ref{fig:gcd1} where the vertical edges have weight $-(l\frac{m}{m\wedge l}+\frac{l}{m\wedge l})$ and the horizontal ones have weight $-(m\frac{l}{m\wedge l} + \frac{m}{m\wedge l})$.

\begin{figure}[h]
\centering
\input{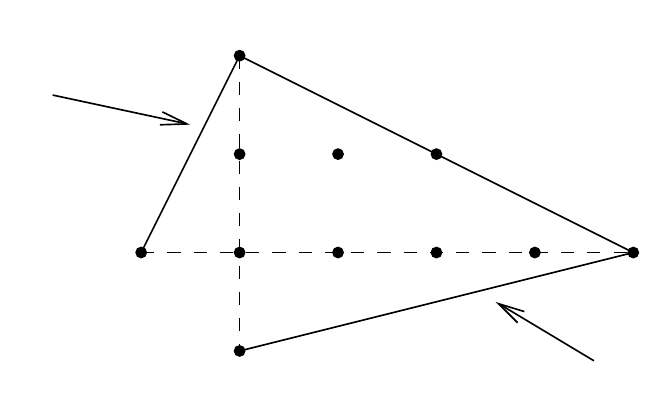_t}
\caption{The admissible graph $\mathcal{G}$.}
\label{fig:gcd1}
\end{figure}

As $\tau_\sigma \in \im(\mu)$, so is $(\tau_\sigma)^\frac{l}{m\wedge l}$. By Theorem \ref{thm:rea}, we have $\tau_\mathcal{G} \in \im(\mu)$. Now using Proposition \ref{prop:side} and Lemma \ref{lem:chasing}, we can chase any primitive integer segment of $\tau_\mathcal{G} \circ(\tau_\sigma)^{-\frac{l}{m\wedge l}} \in \im(\mu)$ except $\sigma'$ and deduce that $(\tau_{\sigma'})^{\frac{m}{m\wedge l}} \in \im(\mu)$.
\end{proof}

\begin{Lemma}\label{lem:gcd2}
Let $\kappa$ be a vertex of $\da$, $\epsilon_1$ and $\epsilon_2$ be the two edges of $\da$ adjacent to $\kappa$ and let $m \leq \min \{ l_{\epsilon_1}, l_{\epsilon_2} \}$ be a positive integer. Let $\sigma \subset \Delta$ be a bridge ending at the point next to $\kappa$ on $\epsilon_1$. If $(\tau_{\sigma})^m \in \im (\mu)$, then $(\tau_{\sigma'})^m \in \im (\mu)$ for any bridge $\sigma' \subset \Delta$ ending on $\epsilon_2$.
\end{Lemma}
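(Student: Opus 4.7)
The plan is to imitate the proof of Lemma \ref{lem:gcd1}: build an admissible weighted graph $\G = (G, w)$ on $\Delta$ in which $\sigma$ appears with weight a multiple of $m$, $\sigma'$ appears with weight $\pm m$, and every remaining edge of $G$ is either a primitive segment on $\partial \da$, a bridge at the vertex $\kappa$, or a chord assigned weight $\pm 1$. Applying Theorem \ref{thm:rea} gives $\tau_\G \in \im(\mu)$. One then cancels the $\sigma$-contribution using the hypothesis $(\tau_\sigma)^m \in \im(\mu)$, cancels the contributions on $\partial \da$ and the short bridges by Propositions \ref{prop:side} and \ref{prop:corner}, and peels off the unit-weight chords one by one through Lemma \ref{lem:chasing}. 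What remains is $(\tau_{\sigma'})^m \in \im(\mu)$.

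First I would reduce to $m \geq 2$: when $m = 1$ the hypothesis reads $\tau_\sigma \in \im(\mu)$ and the conclusion follows from Lemma \ref{lem:propagate}. Then I normalize $\Delta$ at $\kappa$ so that $\kappa = (0,0)$, with $\epsilon_1$ along the horizontal axis and $\epsilon_2$ along the vertical axis. By Lemma \ref{lem:allbridge} I may choose $\sigma$ to be the bridge from $\ak = (0,-1)$ to $(1,0)$ (primitive direction $(1,1)$) and $\sigma'$ to be the bridge from $\akp = (-1,0)$ to $(0,j)$ (primitive direction $(1,j)$) for the prescribed $1 \leq j \leq l_{\epsilon_2}$. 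The inequality $m \leq \min(l_{\epsilon_1}, l_{\epsilon_2})$ guarantees that both $(m,0) \in \epsilon_1$ and $(0,m) \in \epsilon_2$, giving enough room inside $\da$ for the forthcoming graph.

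The graph $\G$ then contains: the bridge $\sigma$ with weight $m$ (or a small multiple), the bridge $\sigma'$ with weight $\pm m$, the primitive horizontal segments of $\epsilon_1$ from $\kappa$ outward, the primitive vertical segments of $\epsilon_2$ from $\kappa$ up to $(0,\max(j,m))$, the two short bridges $[\kappa,\ak]$ and $[\kappa,\akp]$, and a chain of auxiliary chord-segments (each primitive, each with weight $\pm 1$) joining lattice points of $\epsilon_1$ and $\epsilon_2$ so that the balancing condition holds at every interior lattice vertex of $G$. The weights on the segments of $\partial \da$ and on the two short bridges are forced by these balancing equations. To check admissibility, I would exhibit a piecewise-linear convex function on $\Delta$ whose domains of linearity define a subdivision containing $G$, then promote it to a unimodular convex subdivision via Lemmas \ref{lem:extend} and \ref{lem:refinement}.

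The main obstacle is the combinatorial design of the chord configuration: the chord weights must all be $\pm 1$ so that Lemma \ref{lem:chasing} can chase them (each chord terminating, after reduction, at an interior lattice vertex with valency one), while at the same time $\sigma$ must carry a weight divisible by $m$ and $\sigma'$ must carry weight exactly $\pm m$, and the balancing equations must close up for every admissible value of $j$. The inequality $m \leq \min(l_{\epsilon_1}, l_{\epsilon_2})$ is precisely what allows one to spread the imbalance created at $(1,0)$ over a chain of unit-weight chords reaching up to $(0,m)$ on $\epsilon_2$ and to propagate vertically on to $(0,j)$. The homological variant is obtained verbatim, since each of Theorem \ref{thm:rea}, Propositions \ref{prop:side} and \ref{prop:corner}, and Lemmas \ref{lem:allbridge}, \ref{lem:chasing}, \ref{lem:propagate} admits a homological counterpart stated alongside its geometric version.
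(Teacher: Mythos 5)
The high-level architecture you describe (build an admissible weighted graph, invoke Theorem \ref{thm:rea}, peel off edges by Propositions \ref{prop:side}, \ref{prop:corner} and Lemma \ref{lem:chasing}) is the right kind of tool, and the reduction of the case $m=1$ to Lemma \ref{lem:propagate} is correct. But the heart of your argument is missing: you never actually produce the balanced weighted graph $\G$. You say explicitly that ``the main obstacle is the combinatorial design of the chord configuration'' and then gesture at the inequality $m\leq\min(l_{\epsilon_1},l_{\epsilon_2})$ as if it closes the gap, but no construction, no balancing equations, and no verification of admissibility are given. In particular it is not clear that one can have $\sigma$ with a weight divisible by $m$, $\sigma'$ with weight $\pm m$, and all the remaining non-boundary segments with weight $\pm 1$, with balance holding at every interior vertex, \emph{uniformly} in the position $(0,j)$ of the endpoint of $\sigma'$. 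Balance at $(0,j)$ forces the horizontal components of the chords emanating from that point to sum to $m$, and with each chord restricted to weight $\pm 1$ this propagates an $m$-fold imbalance into the interior; whether this can be absorbed by a chain of unit-weight chords that is simultaneously a subgraph of a convex unimodular subdivision depends delicately on $j$ and is exactly what needs to be proved.

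The paper avoids this difficulty entirely by a two-step reduction that you do not use. It works with \emph{two} explicit admissible graphs $\G$ and $\G'$: from $\G$ and the hypothesis $\tau_\sigma^m\in\im(\mu)$ one extracts $\tau_{\sigma'}\in\im(\mu)$ for the specific bridge $\sigma'$ ending at the point of $\epsilon_2$ at distance $m$ from $\kappa$; from $\G'$ one then extracts $\tau_{\sigma''}\in\im(\mu)$ for a bridge $\sigma''$ ending at $(0,m)$, i.e.\ at distance $m$ from $\kappa$ on $\epsilon_1$. At that point Lemma \ref{lem:gcd1} (which your proposal does not invoke at all) applied at $\kappa$ to $\sigma''$ gives, for the arbitrary bridge $\rho$ at distance $l$ on $\epsilon_2$, that $(\tau_\rho)^{m/(m\wedge l)}\in\im(\mu)$; since $m/(m\wedge l)$ divides $m$, this yields $(\tau_\rho)^m\in\im(\mu)$ for every $l$. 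The point is that Lemma \ref{lem:gcd1} handles the variation of the distance $l$ for you, whereas your single-graph scheme forces you to build a separate graph for each $l$ and prove admissibility each time --- which is precisely the step you left open. To repair your argument you would either need to supply the explicit chord configuration together with a convex function supporting it for every $j$, or, more economically, introduce the intermediate bridge $\sigma''$ at $(0,m)$ and finish with Lemma \ref{lem:gcd1} as the paper does.
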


\begin{proof}
Using a normalization of $\Delta$ at $\kappa$ if necessary, we can assume that $\kappa=(0,0)$ and that $\epsilon_1$ is supported by the vertical axis. By Lemma \ref{lem:allbridge}, we can assume without loss of generality that $\sigma$ joins $(-1,0)$ to $(0,1)$.
Consider the admissible graphs $\mathcal{G}$  and $\mathcal{G}'$ of Figure \ref{fig:gcd2} where the vertical edges have weight $-m-1$ and the horizontal ones have weight $-2m$ for $\G$ and $-m-1$ for $\G'$. By Theorem \ref{thm:rea}, we have $\tau_\mathcal{G}, \, \tau_{\mathcal{G}'} \in \im(\mu)$. Using Proposition \ref{prop:side} and Lemma \ref{lem:chasing}, we can chase any primitive integer segment of $\tau_\mathcal{G} \circ (\tau_\sigma)^{-1} \in \im(\mu)$ except $\sigma'$ and deduce that $\tau_{\sigma'} \in \im(\mu)$. Using again Proposition \ref{prop:side} and Lemma \ref{lem:chasing}, we can chase any primitive integer segment of $\tau_{\mathcal{G}'} \circ (\tau_{\sigma'})^{-1} \in \im(\mu)$ except $\sigma''$ and deduce that $\tau_{\sigma''} \in \im(\mu)$. To conclude, we can apply the Lemma \ref{lem:gcd1} to the vertex $\kappa$ and the primitive integer segment $\sigma''$.
 
\begin{figure}[h]
\centering
\input{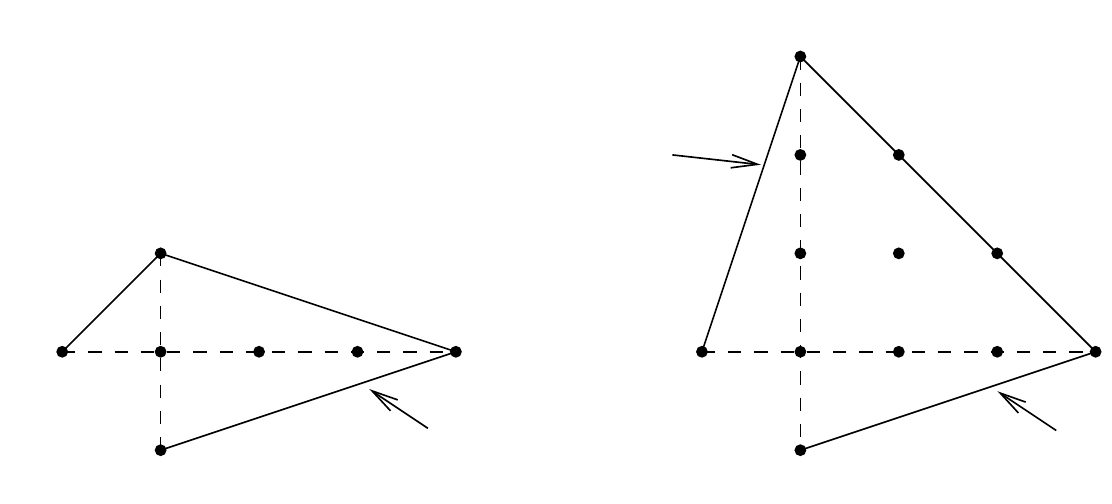_t}
\caption{The admissible graphs $\G$ (left) and $\G'$ (right).}
\label{fig:gcd2}
\end{figure}

\end{proof}

\newpage

\begin{proof}[Proof of Proposition \ref{prop:gcdedges}]
Let $\epsilon_1$ and $\epsilon_2$ be two consecutive edges of $\da$ intersecting at the vertex $\kappa$.  We first claim that for any bridge $\sigma$ ending on $\epsilon_1\cup\epsilon_2$, we have $(\tau_{\sigma})^{l_{\epsilon_1}\wedge l_{\epsilon_2}}\in\im(\mu)$. Without loss of generality, we may assume that $l_{\epsilon_1} \geq l_{\epsilon_2}$ and consider the normalization of $\Delta$ at $\kappa$ such that $\epsilon_1$ is horizontal. Let $\xi$ be the other end of $\epsilon_2$.

Since $\xi$ is a vertex, we know from Proposition \ref{prop:corner} that if $\sigma'$ is a bridge ending at $\xi$ then $\tau_{\sigma'}\in\im(\mu)$. Using Lemma \ref{lem:gcd1} and then Lemma \ref{lem:gcd2}, we deduce that for any bridge $\sigma$ ending on $\epsilon_1\cup\epsilon_2$, we have $(\tau_{\sigma})^{l_{\epsilon_2}}\in\im(\mu)$.

Now consider the weighted graph $\G''$ given in Figure \ref{fig:gcdedges} where the horizontal edges have weight $-2l_{\epsilon_1}$ and the vertical ones have weight $-2$.

\begin{figure}[h]
\centering
\input{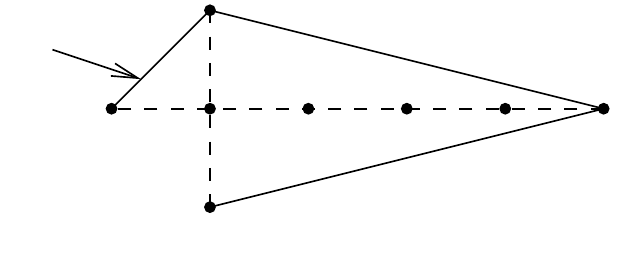_t}
\caption{The admissible graph $\mathcal{G}''$.}
\label{fig:gcdedges}
\end{figure}

This graph is admissible, so by Theorem \ref{thm:rea}, we know that $\tau_{\G''}\in\im(\mu)$. Using Propositions \ref{prop:corner} and \ref{prop:side} as well as Lemma \ref{lem:chasing}, we obtain that $(\tau_{\sigma''})^{l_{\epsilon_1}}\in\im(\mu)$ and thus $(\tau_{\sigma''})^{l_{\epsilon_1}\wedge l_{\epsilon_2}}\in\im(\mu)$. Applying Lemma \ref{lem:gcd2} twice, we deduce that for any bridge $\sigma$ ending on $\epsilon_1\cup\epsilon_2$, we have $(\tau_{\sigma})^{l_{\epsilon_1}\wedge l_{\epsilon_2}}\in\im(\mu)$ which proves the claim.

Using this claim and Lemma \ref{lem:gcd2}, we show by induction that for any $k$ consecutive edges $\epsilon_1$, ...,$\epsilon_k$ of $\da$, we have $(\tau_{\sigma})^{l_{\epsilon_1}\wedge...\wedge l_{\epsilon_k}} \in \im(\mu)$ for all bridge $\sigma$ ending on $\epsilon_1 \cup ... \cup\epsilon_k$. The result follows.
\end{proof}

The proof of the case $d=2$ in Theorem \ref{thm:main1} consists in finding a family of Dehn twists in the image of the monodromy which generates the mapping class group. This family will appear as a ``snake'' in $\Delta$.

\begin{Definition}
  A snake in the polygon $\Delta$ is a family of primitive integer segments $\sigma_1 = [v_0,v_1] ,\sigma_2 = [v_1,v_2],\ldots,\sigma_{g_{\Li}} = [v_{g_{\Li}-1},v_{g_{\Li}}], \sigma$ in $\Delta$ such that
  \begin{enumerate}
  \item $v_0$ is a boundary point of $\Delta$ while $v_1$ is a vertex of $\da$ and $v_2,\ldots,v_{g_{\Li}}$ are distinct points of $\da$,
\item the intersection of any two segments is either empty or one of the $v_i$,
\item $\sigma$ is a bridge starting at $v_2$.
  \end{enumerate}
\end{Definition}

Given a snake in $\Delta$, the loops $\delta_{v_i}$ and $\delta_{\sigma_i}$ are arranged as in the Figure \ref{fig:humph}.

\begin{figure}[h]
\centering{
\input{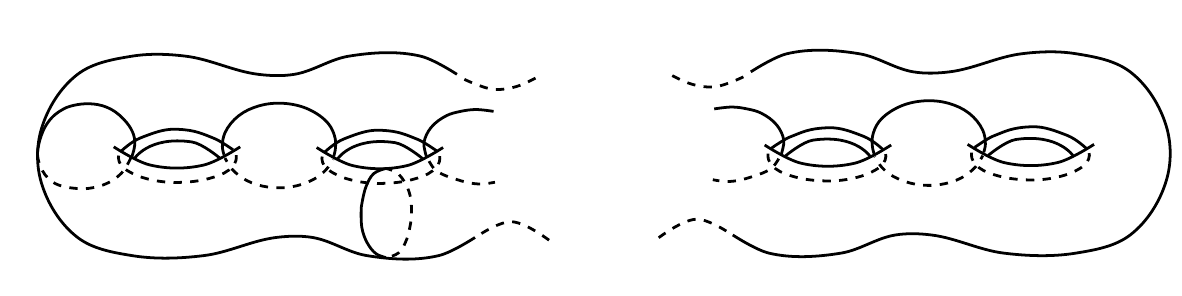_t}}
\caption{The loops $\delta_{\sigma_i}$ and $\delta_{v_i}$ in $C_0$} \label{fig:humph}
\end{figure}

\begin{Lemma}\label{lem:serpent}
 There always exists a snake $\sigma_1 = [v_0,v_1] ,\sigma_2 = [v_1,v_2],\ldots,\sigma_{g_{\Li}} = [v_{g_{\Li}-1},v_{g_{\Li}}], \sigma$ in $\Delta$. 

Moreover, if for all the segments of the snake the associated Dehn twist is in $\im(\mu)$ (resp. in $\im([\mu])$), then the map $\mu$ (resp. $[\mu]$) is surjective.
\end{Lemma}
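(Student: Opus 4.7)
The plan has two parts: (i) explicit construction of a snake via a boustrophedon enumeration of $\da\cap\Z^2$, and (ii) identifying the resulting Dehn twists with a Humphries-type generating set of $\MCG(C_0)$.

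For part (i), I first fix a vertex $\kappa$ of $\da$ and apply the normalization of \S\ref{sec:normalization}, so that $\kappa=(0,0)$ and the two edges of $\da$ meeting at $\kappa$ lie along the positive coordinate axes; write $\ell\geq 1$ for the integer length of the horizontal edge of $\da$ at $\kappa$. Set $v_1:=\kappa$, and enumerate the lattice points of $\da$ row by row: for each height $k\geq 0$ such that $\da\cap(\R\times\{k\})$ contains lattice points $(a_k,k),\ldots,(b_k,k)$, list them left-to-right when $k$ is even and right-to-left when $k$ is odd, linking the last point of row $k$ to the first point of row $k+1$ by the corresponding segment. In particular $v_2=(1,0)$ lies on the bottom edge of $\da$. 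Every intra-row step has direction $(\pm 1,0)$ and every inter-row step has direction $(a,\pm 1)$ for some $a\in\Z$, so all segments involved are primitive. Convexity of $\da$ ensures that each such segment stays in $\da\subset\itr(\Delta)$, and distinct segments meet only at shared endpoints $v_i$. To finish the snake, take $v_0:=\ak=(0,-1)\in\partial\Delta$ from \S\ref{sec:normalization}, so that $\sigma_1=[v_0,v_1]$ is a bridge, and let $\sigma$ be any bridge at $v_2$ (one exists since $v_2\in\partial\da$).

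For part (ii), Theorem~\ref{thm:acycle} combined with Lemma~\ref{irred} gives $\tau_{v_i}\in\im(\mu)$ for every $i$. Together with the hypothesis that every $\tau_{\sigma_i}$ and $\tau_\sigma$ lies in $\im(\mu)$, this yields $2g_\Li+1$ Dehn twists in $\im(\mu)$. The intersection pattern of the associated curves is dictated by the remark following Definition~\ref{def:segtwist}, together with the observation that two loops $\delta_\sigma,\delta_{\sigma'}$ whose defining segments meet only at lattice points are disjoint in $C^\circ_\Delta$ (their lifts to $\tilde{\Delta}$ avoid each other since integer points are blown up). Consequently, $\delta_{\sigma_1},\delta_{v_1},\delta_{\sigma_2},\delta_{v_2},\ldots,\delta_{\sigma_{g_\Li}},\delta_{v_{g_\Li}}$ form a chain of length $2g_\Li$, while $\delta_\sigma$ meets only $\delta_{v_2}$ among them. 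This is precisely the Humphries configuration, and Humphries's theorem then forces $\im(\mu)=\MCG(C_0)$. The same reasoning at the homological level, where the chain already yields a symplectic basis of $H_1(C_0,\Z)$ whose transvections generate $\spaut(H_1(C_0,\Z))$, settles the statement for $[\mu]$; the curve $\delta_\sigma$ is in fact superfluous in that case.

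The main technical point is the correctness of the boustrophedon construction, which reduces to convexity of $\da$ and to the fact that $\gcd(a,\pm 1)=1$; the rest of the argument rests on invoking Humphries's generation theorem as a black box.
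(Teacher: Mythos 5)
Your proof follows the paper's argument closely (row-by-row ordering of $\da\cap\Z^2$, Theorem~\ref{thm:acycle}, Humphries's theorem); the only cosmetic difference is the boustrophedon enumeration where the paper uses the colexicographic order, and both produce a valid snake. Two small corrections: the fact that $\tau_{v_i}\in\im(\mu)$ comes from Theorem~\ref{thm:acycle} together with Proposition~\ref{dehnm}, not Lemma~\ref{irred}; and ``any bridge at $v_2$'' is slightly too loose, since an arbitrary bridge could cross $\sigma_1$ away from the $v_i$ and so violate condition~(2) of the snake definition --- one should take the explicit bridge $\sigma=[(0,-1),(1,0)]$ (as the paper does), noting that by Lemma~\ref{lem:allbridge} the associated Dehn twist is independent of the choice anyway.
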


\begin{proof}
  To prove the existence of a snake, choose a vertex $\kappa$ of $\da$ and take a normalization of $\Delta$ at $\kappa$. Let $v_0 = (-1,0)$, $v_1 = \kappa$, $v_2 = (1,0)$ and $\sigma = [(0,-1),(1,0)]$. We number the rest of the lattice points in $\da$ by going through them in the colexicographic order.

We know from Theorem \ref{thm:acycle} that for $i=1,\ldots,g_{\Li}$, the Dehn twist $\tau_{v_i}$ is in $\im(\mu)$. The second part of the Lemma then follows from Humphries' Theorem \cite{Humphries}, which states that the family $\tau_{\sigma_1},\tau_{v_1},\ldots,\tau_{\sigma_{g_{\Li}}},\tau_{v_{g_{\Li}}}$, and $\tau_{\sigma}$ generates the group $\MCG(C_0)$.
\end{proof}

\begin{proof}[Proof of Theorem \ref{thm:main1}, case $d=2$ and $n=1$]
Assume that the largest order of a root of $K_{\Xd}\otimes\Li$ is $1$. We know from Proposition \ref{prop:rootandlength} that $\gcd \big( \left\lbrace l_\epsilon \, \big| \, \forall \, \epsilon \text{ edge of } \da \right\rbrace \big) = 1$. Thus it follows from Proposition \ref{prop:gcdedges} that for any bridge the associated Dehn twist is in $\im(\mu)$. Proposition \ref{prop:interior} then implies that for any primitive integer segment $\sigma$ joining two points of $\Delta\cap \Z^2$ not both on the boundary of $\Delta$, we have $\tau_{\sigma}\in\im(\mu)$ .

In particular, taking any snake in $\Delta$, all the corresponding Dehn twists are in $\im(\mu)$. Lemma \ref{lem:serpent} then gives the result we wanted.
\end{proof}

\subsection{$K_{\Xd}\otimes \Li$ odd}\label{sec:odd}

\begin{proof}[Proof of Theorem \ref{thm:main2}, case $d=2$ and $n$ odd]
Since $K_{\Xd}\otimes\Li$ does not have a square root, the largest order of one of its roots is odd. Let us show that the algebraic monodromy $[\mu]$ is surjective on $\spaut(H_1(C_0,\Z))$.

Notice first that since $n$ is odd, we know from Proposition \ref{prop:rootandlength} that the greatest common divisor $s$ of the lengths of the edges of $\da$ is odd.

Using Lemma \ref{lem:serpent}, take a snake $\sigma_1 = [v_0,v_1] ,\sigma_2 = [v_1,v_2],\ldots,\sigma_{g_{\Li}} = [v_{g_{\Li}-1},v_{g_{\Li}}], \sigma$ in $\Delta$. We know from Proposition \ref{prop:gcdedges} that $\tau_{\sigma}^s$ is in $\im(\mu)$.

On the other hand, $[\tau_{\sigma}]^2$ is in $\im([\mu])$. Indeed, we know from Propositions \ref{prop:corner} and \ref{prop:side} and Theorem \ref{thm:acycle} that $[\tau_{\sigma_1}]$, $[\tau_{\sigma_2}]$ and $[\tau_{v_1}]$ are in $\im([\mu])$. Since both boundary components of a closed regular neighborhood of $\delta_{\sigma_1}\cup\delta_{v_1}\cup \delta_{\sigma_2}$ are homologous to $\delta_{\sigma}$ in $C_0$ if one orients them carefully, using the chain rule (see Prop 4.12 \cite{farbmarg}), we have
\[
([\tau_{\sigma_1}][\tau_{v_1}][\tau_{\sigma_2}])^4 = [\tau_{\sigma}]^2\in\im([\mu]).
\]

Since $s$ is odd, we deduce that $[\tau_{\sigma}]$ is in $\im([\mu])$.
 
We apply Proposition \ref{prop:interior} to deduce that for any $i=1,\ldots,g_{\Li}$, $[\tau_{\sigma_i}]$ is in $\im([\mu])$. Lemma \ref{lem:serpent} provides the last piece of the proof.
\end{proof}

\subsection{About the Conjecture \ref{conj:general}}\label{sec:conj}

We show in this section that when the adjoint bundle of $\Li$ admits a non-trivial root $\LS$, we can still exhibit a lot of (powers of) Dehn twists in $\im(\mu)$ and $\im(\mmu)$. This is a motivation for Conjecture \ref{conj:general} as well as a preparation for the results of \cite{article2}. Indeed, in \cite{article2} we will show that the conjecture is true when the highest order of a root of $K_{\Xd}\otimes\Li$ is $2$.

Let $d\geq 1$ be an integer and $h_{\frac{1}{d},\kappa}$ be the homothety of ratio $\frac{1}{d}$ and centered at $\kappa$. When $h_{\frac{1}{d},\kappa}(\da)$ is again a lattice polygon, we say that $\da$ is divisible by $d$ and we denote by $\da(d)$ the subset $h_{\frac{1}{d},\kappa} ^{-1}(h_{\frac{1}{d},\kappa}(\da)\cap \Z)$. Note that neither $d$ nor $\da(d)$ depend on the choice of the vertex $\kappa$. In this case, we introduce a variation of the graphs $\G_{\kappa,\kappa',v}^{m_1,m_2}$ and  $\G_{\kappa',\kappa,v}^{m_1,m_2}$ which we will use to motivate conjecture \ref{conj:general}.

Assume that $v = h_{\frac{1}{d},\kappa} ^{-1}(u)\in\da(d)$ and denote by $\G_{\kappa,\kappa',v}^{m_1,m_2}(d)$ (resp. $\G_{\kappa',\kappa,v}^{m_1,m_2}(d)$ the following weighted graphs. The graph $G_{\kappa,\kappa',v}(d)$ (resp. $G_{\kappa',\kappa,v}(d)$)  is given as the reunion of $h_{\frac{1}{d},\kappa} ^{-1}(G_{\kappa,\kappa',u}\cap\da)$ (resp. of $h_{\frac{1}{d},\kappa} ^{-1}(G_{\kappa',\kappa,u}\cap\da)$) and of the segments $[\akp, h_{\frac{1}{d},\kappa} ^{-1}(\kappa')]$, $[\akp,\kappa]$ and $[\ak,\kappa]$. The weight function  on $G_{\kappa,\kappa',v}(d)$ (resp. on $G_{\kappa',\kappa,v}(d)$) is equal to $m_{\kappa,\kappa',u}^{m_1,m_2}\circ h_{\frac{1}{d},\kappa}$ (resp. to $m_{\kappa',\kappa,u}^{m_1,m_2}\circ h_{\frac{1}{d},\kappa}$) inside $\da$ and adjusted on the last three segments to be balanced at $\kappa$ and $h_{\frac{1}{d},\kappa} ^{-1}(\kappa')$. We denote by $\sigma_1$ and $\sigma_2$ the two primitive integer segments in $G_{\kappa,\kappa',v}(d)$ (resp. in $G_{\kappa',\kappa,v}(d)$) which have an end at $v$.

 We have the analogue of Lemma \ref{lem:diamonds}. We omit its proof as it is the same as that of Lemma \ref{lem:diamonds}.

\begin{Lemma}\label{lem:diamondsd}
Let $d\geq 1$ be an integer and assume that $\da$ is divisible by $d$. Let $v\in\da(d)$.\\
 The weighted graphs $\G_{\kappa,\kappa',v}^{m_1,m_2}(d)$ and $\G_{\kappa',\kappa,v}^{m_1,m_2}(d)$ satisfy the following properties:
  \begin{itemize}
  \item the edges $\sigma_1 = \sigma_{1,0}$ and $\sigma_2 = \sigma_{2,0}$ generate the lattice,
  \item the weight function $m_{\kappa,\kappa',v}^{m_1,m_2}$ (resp. $m_{\kappa',\kappa,v}^{m_1,m_2}$) satisfies the balancing condition \eqref{eq:balance} at every element of $V(G_{\kappa,\kappa',v}(d))\cap\itr\Delta$ (resp. $V(G_{\kappa',\kappa,v}(d))\cap\itr\Delta$) except at $v$.
  \end{itemize}
Assume moreover that there is a bridge $\sigma'$ ending at $h_{\frac{1}{d},\kappa} ^{-1}(\kappa')$ such that $\tau_{\sigma'}$ is in $\im(\mu)$ (resp. $[\tau_{\sigma'}]$ is in $\im([\mu])$). Then $\tau_{\sigma_1}$ and $\tau_{\sigma_2}$ are in $\im(\mu)$ (resp. $[\tau_{\sigma_1}]$ and $[\tau_{\sigma_2}]$ are in $\im([\mu])$). \qed
\end{Lemma}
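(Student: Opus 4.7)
The plan is to mirror the proof of Lemma \ref{lem:diamonds} essentially line by line, exploiting the fact that the homothety $h_{\frac{1}{d},\kappa}$ is an affine map that preserves both the balancing condition and the convexity requirements used in the original argument. The only genuinely new input is that the interior vertices of our graph are no longer all the integer points of $\da$ but only the subset $\da(d)$, and that the starting bridge ends at $h_{\frac{1}{d},\kappa}^{-1}(\kappa')$ rather than $\kappa'$ itself.

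First I would verify the two purely combinatorial statements. The fact that $\sigma_1$ and $\sigma_2$ generate the lattice follows immediately from the original construction of $G_{\kappa,\kappa',u}$, which already has this property at every vertex, together with the fact that $h_{\frac{1}{d},\kappa}^{-1}$ sends primitive segments of $h_{\frac{1}{d},\kappa}(\da)$ to primitive segments of $\Delta$. The balancing condition at every interior vertex of $G_{\kappa,\kappa',v}(d)$ except $v$ is similarly inherited from the balancing of $m_{\kappa,\kappa',u}^{m_1,m_2}$ on $G_{\kappa,\kappa',u}$, since the homothety is linear and the three additional edges $[\akp, h_{\frac{1}{d},\kappa}^{-1}(\kappa')]$, $[\akp,\kappa]$, $[\ak,\kappa]$ were added precisely so that balancing holds at $\kappa$ and at $h_{\frac{1}{d},\kappa}^{-1}(\kappa')$.

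Next I would carry out the main topological argument under the hypothesis that $\tau_{\sigma'} \in \im(\mu)$. As in the proof of Lemma \ref{lem:diamonds}, I would pick the consecutive vertex $\xi$ of $\da$ and the next boundary point $\xi'$, form $G := G_{\kappa,\kappa',v}(d) \cup G_{\xi,\xi',v}(d)$, and produce a unimodular convex subdivision of $\Delta$ supporting $G$. Convexity on the pentagon with vertices $\kappa',\kappa,\xi,\xi',v$ follows from the same auxiliary piecewise linear function (value $0$ on all $v'_i, w'_j$ and on $\kappa,\xi,v$, and value $1$ on $\kappa',\xi'$), whose ``concavity'' at the successive points $v'_i$ and $w'_j$ forces the subdivision to contain the part of $G$ inside the pentagon. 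Then Lemma \ref{lem:extend} extends this convex subdivision to all of $\Delta$, and Lemma \ref{lem:refinement} makes it unimodular without destroying the inclusion of $G$. With weights chosen so that $\sigma_1$ (respectively $\sigma_2$) carries weight $1$ and balancing is preserved at $v$ via the two edges of $G_{\xi,\xi',v}(d)$ going out of $v$, Theorem \ref{thm:rea} gives $\tau_\G \in \im(\mu)$. Finally Lemma \ref{lem:chasing} is applied iteratively to remove every edge of $\G$ distinct from $\sigma_1$ (respectively $\sigma_2$), using Proposition \ref{prop:corner}, Proposition \ref{prop:side}, Lemma \ref{lem:propagate}, and the hypothesis $\tau_{\sigma'}\in\im(\mu)$ (transported to any bridge ending at $h_{\frac{1}{d},\kappa}^{-1}(\kappa')$ via Lemma \ref{lem:allbridge}) to know that all those Dehn twists already lie in $\im(\mu)$.

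The only step that requires any genuine care, and which I anticipate as the main obstacle, is the inductive chasing at the ``$t$-step'' level: when $G_{\kappa,\kappa',v}(d)$ is built in more than one rotation step, one must, exactly as in Lemma \ref{lem:diamonds}, first apply the lemma recursively to the sub-graph $G_{\kappa,\kappa',v'_1}(d)$ in order to know that $\tau_{\sigma_{1,1}}$ and $\tau_{\sigma_{2,1}}$ lie in $\im(\mu)$ before Lemma \ref{lem:chasing} can strip them away. Because the construction of $G_{\kappa,\kappa',v}(d)$ in $\da(d)$ is literally the preimage under $h_{\frac{1}{d},\kappa}$ of the construction of $G_{\kappa,\kappa',u}$ in the scaled polygon, this induction goes through unchanged. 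The same argument (with the two cases depending on whether $\kappa'$ is a vertex of $\da$) handles $\G_{\kappa',\kappa,v}^{m_1,m_2}(d)$, and the homological statement follows from the same chain of implications since Lemma \ref{lem:chasing}, Proposition \ref{prop:corner}, Proposition \ref{prop:side} and Lemma \ref{lem:propagate} all hold at the level of $\im(\mmu)$.
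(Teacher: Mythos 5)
The paper gives no independent proof of this lemma: it states that the argument is verbatim that of Lemma~\ref{lem:diamonds}, adapted to the rescaled graphs. Your proposal spells out exactly that adaptation — inheritance of the lattice-generating and balancing properties under $h_{\frac{1}{d},\kappa}$, the auxiliary convex function and its extension via Lemmas~\ref{lem:extend} and~\ref{lem:refinement}, Theorem~\ref{thm:rea} plus iterated chasing via Lemma~\ref{lem:chasing}, and the induction on the number of rotation steps — so it follows the paper's intended route.
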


\begin{Proposition}\label{prop:interiord}
 Let $d\geq 1$ be an integer and assume that $\da$ is divisible by $d$.  Assume that $\tau_{\sigma'}\in\im(\mu)$ (resp. $[\tau_{\sigma'}]\in\im([\mu])$) for any bridge $\sigma'$ ending at a point in $\da(d)$. Then for any primitive integer segment $\sigma\subset \Delta$, if the line it generates intersects $\da(d)$ then we have $\tau_{\sigma}\in\im(\mu)$ (resp. $[\tau_{\sigma}]\in\im([\mu])$).
\end{Proposition}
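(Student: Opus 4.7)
The plan is to mirror the proof of Proposition \ref{prop:interior}, substituting the auxiliary graphs $\G_{\kappa',\kappa,v}^{m_1,m_2}$ and $\G_{\kappa,\kappa',v}^{m_1,m_2}$ by their $d$-scaled counterparts $\G_{\kappa',\kappa,v}^{m_1,m_2}(d)$ and $\G_{\kappa,\kappa',v}^{m_1,m_2}(d)$ from Lemma \ref{lem:diamondsd}. Let $\sigma \subset \Delta$ be a primitive integer segment whose supporting line $L$ meets $\da(d)$. First I would treat the generic case in which $L \cap \da(d)$ contains two points $v, w$, both in $\itr(\da)$, such that $\sigma \subset [v,w]$. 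Picking two consecutive vertices $\kappa, \xi$ of $\da$ and boundary lattice points $\kappa'$ on $[\kappa,\xi]$ next to $\kappa$ and $\xi'$ next to $\xi$ outside of $[\kappa,\xi]$ so that the pentagon $\kappa, v, w, \xi', \xi$ is convex with its vertices arranged in this cyclic order, exactly as in the proof of Proposition \ref{prop:interior}, I would form the weighted graph $\G$ as the union of $\G_{\kappa',\kappa,v}^{m_1,m_2}(d)$, $\G_{\xi,\xi',w}^{m'_1,m'_2}(d)$, and the chain of primitive integer segments of $L$ lying in $[v,w]$, each weighted $1$.

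The integer weights $m_1, m_2$ would be chosen so as to satisfy the balancing condition at $v$: denoting by $\sigma_L$ the primitive integer vector on $L$ pointing from $v$ toward $w$, and using that the two segments $\sigma_1, \sigma_2$ of $\G_{\kappa',\kappa,v}^{m_1,m_2}(d)$ at $v$ generate $\Z^2$ (by Lemma \ref{lem:diamondsd}), one solves $\sigma_L + m_1 \sigma_1 + m_2 \sigma_2 = 0$. The same at $w$ determines $m'_1, m'_2$. Balance then holds at $v$ and $w$ by construction; at every other interior vertex of the auxiliary subgraphs by Lemma \ref{lem:diamondsd}; and at the lattice points of $L$ strictly between $v$ and $w$ because the two unit-weight segments of the chain meeting there contribute opposite vectors. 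Existence of a unimodular convex subdivision of $\Delta$ supporting $\G$ follows by the extension and refinement procedure of the proof of Lemma \ref{lem:diamonds}, via Lemmas \ref{lem:extend} and \ref{lem:refinement}. Thus $\G$ is admissible, and Theorem \ref{thm:rea} gives $\tau_\G \in \im(\mu)$.

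To conclude $\tau_\sigma \in \im(\mu)$, I would invoke Lemma \ref{lem:diamondsd}: its bridge hypothesis is exactly the assumption of the proposition, so $\tau_{\sigma_1}, \tau_{\sigma_2}, \tau_{\eta_1}, \tau_{\eta_2} \in \im(\mu)$, where $\eta_1, \eta_2$ are the corresponding segments at $w$. I would then apply Lemma \ref{lem:chasing} iteratively, first to peel off the valency-$1$ interior vertices of the two auxiliary subgraphs, and finally to eliminate every segment of the chain on $[v,w]$ other than $\sigma$, leaving only $\sigma$ with weight $1$. Boundary configurations — an endpoint of $\sigma$ on $\partial \Delta$, or $L \cap \da(d)$ reduced to a single point — would be handled exactly as in Proposition \ref{prop:interior}, by substituting, at the relevant end, a ray from a vertex of $\da$ to $\partial \Delta$ together with a bridge (whose Dehn twist is in $\im(\mu)$ by hypothesis). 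The homological statement follows by the identical reasoning, since Lemma \ref{lem:diamondsd}, Lemma \ref{lem:chasing}, and Theorem \ref{thm:rea} all admit homological analogues.

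The main technical obstacle will be verifying the admissibility of $\G$ — that is, exhibiting a unimodular convex subdivision of $\Delta$ supporting it — in all the configurations that can occur (in particular when $v$ or $w$ coincides with a vertex of $\da$, when $\sigma$ abuts $v$ or $w$, or when $L \cap \da(d)$ is very small). This is a routine but case-heavy adaptation of the corresponding step in Lemma \ref{lem:diamonds}; once this is in place, the weight bookkeeping along the chain on $[v,w]$ and the iterative chasing are straightforward.
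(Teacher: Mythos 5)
Your overall architecture (admissible weighted graph glued from two auxiliary subgraphs at the ends of a chain along $L$, plus Theorem \ref{thm:rea} and chasing) matches the paper. But your graph is built from \emph{two} $d$-scaled subgraphs $\G_{\kappa',\kappa,v}^{m_1,m_2}(d)$ and $\G_{\xi,\xi',w}^{m'_1,m'_2}(d)$ anchored at two points $v,w\in L\cap\da(d)$, and this is where the argument breaks down. The hypothesis only guarantees that $L$ meets $\da(d)$; it gives you one such point. You cannot arrange in general that $L\cap\da(d)$ contains two points bracketing $\sigma$, still less two interior ones: $L\cap\da(d)$ may be a single point, and even when it has several points, $\sigma$ can lie on $L$ entirely outside the segment they span. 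Calling this a ``boundary configuration'' to be absorbed by the ray-plus-bridge substitution of Proposition \ref{prop:interior} does not save it: that substitution requires a bridge at the relevant anchor point with Dehn twist already in $\im(\mu)$, which the hypothesis of Proposition \ref{prop:interiord} only provides for endpoints in $\da(d)$. The far end of your chain is, generically, not in $\da(d)$, so neither a second $\G(\cdot)(d)$ nor a bridge with known twist is available there.

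The paper avoids this by an asymmetric choice: it fixes $u\in L\cap\da(d)$, takes $w$ to be the end of $\sigma$ on the far side of $u$ (choosing $u$ so that $[u,w]\cap\da(d)=\{u\}$), and places the $d$-scaled graph $G_{\kappa',\kappa,u}(d)$ (or $G_{\kappa,\kappa',u}(d)$) only at $u$, while at $w$ it uses the \emph{ordinary} graph $G_{\xi,\xi',w}$ (or $G_{\xi',\xi,w}$). Since $w$ is just an endpoint of $\sigma$, no membership in $\da(d)$ is needed there, and the two sub-cases (according to where $L$ hits the vertical axis at $\kappa$) arrange the needed convexity of the relevant pentagon. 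Your plan as written covers only a strict subcase, and the gap is the case $L\cap\da(d)=\{u\}$ (or more generally $\sigma\not\subset\operatorname{conv}(L\cap\da(d))$), which is not a boundary degeneration but a generic configuration.
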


\begin{proof}
Let us first assume that the line $L$ generated by $\sigma$ intersects $\da(d)$ in a point $u$. The proof uses the same ideas as that of Proposition \ref{prop:interior}. Namely, let $v$ and $w$ be the two ends of $\sigma$. We may assume that $v$ is on the segment $[u,w]$ and that $[u,w]\cap \da(d) = \{u\}$. We will only give the proof in the case where both $u$ and $w$ are in the interior of $\da$ as the other cases follow the same pattern as in Proposition \ref{prop:interior}.

To that end, choose $\kappa$ and $\xi$ two consecutive vertices  of $\da$ which are on the same side of $L$ and such that the points $u$, $w$, $\xi$ and $\kappa$ appear in this order when following the boundary of their convex hull in the clockwise orientation. Take a normalization of $\Delta$ at $\kappa$ such that the edge $[\kappa,\xi]$ is horizontal.

Since the roles of $v$ and $w$ are not the same, we will need to consider two cases. If the line $L$ intersects the vertical line though $\kappa$ in the interval $[\kappa,(0,d))$, take $\kappa'=(1,0)$ and $\xi'$ the boundary lattice point of $\da$ adjacent to $\xi$ and not on the segment $[\kappa,\xi]$. In the other case, take $\kappa'=(0,1)$ and $\xi'$ the boundary point of $\da$ adjacent to $\xi$ and on the segment $[\kappa,\xi]$. Consider now the graph $G$ to be the reunion of $G_{\kappa',\kappa,u}(d)$, $[u,w]$ and $G_{\xi,\xi',w}$ in the first case and the reunion of $G_{\kappa,\kappa',u}(d)$, $[u,w]$ and $G_{\xi',\xi,w}$ in the second case (see Figure \ref{fig:biggrapheven}). Notice that in the first case, the pentagon $\kappa, u, w, \xi', \xi$ is convex and in the second case, the pentagon $\kappa, h_{\frac{1}{d},\kappa}^{-1}(\kappa'), u, w, \xi$ is convex. It can be shown as in the proof of Proposition \ref{prop:interior} that $G$ is supported by a unimodular convex subdivision.

\begin{figure}[h]
\centering
\input{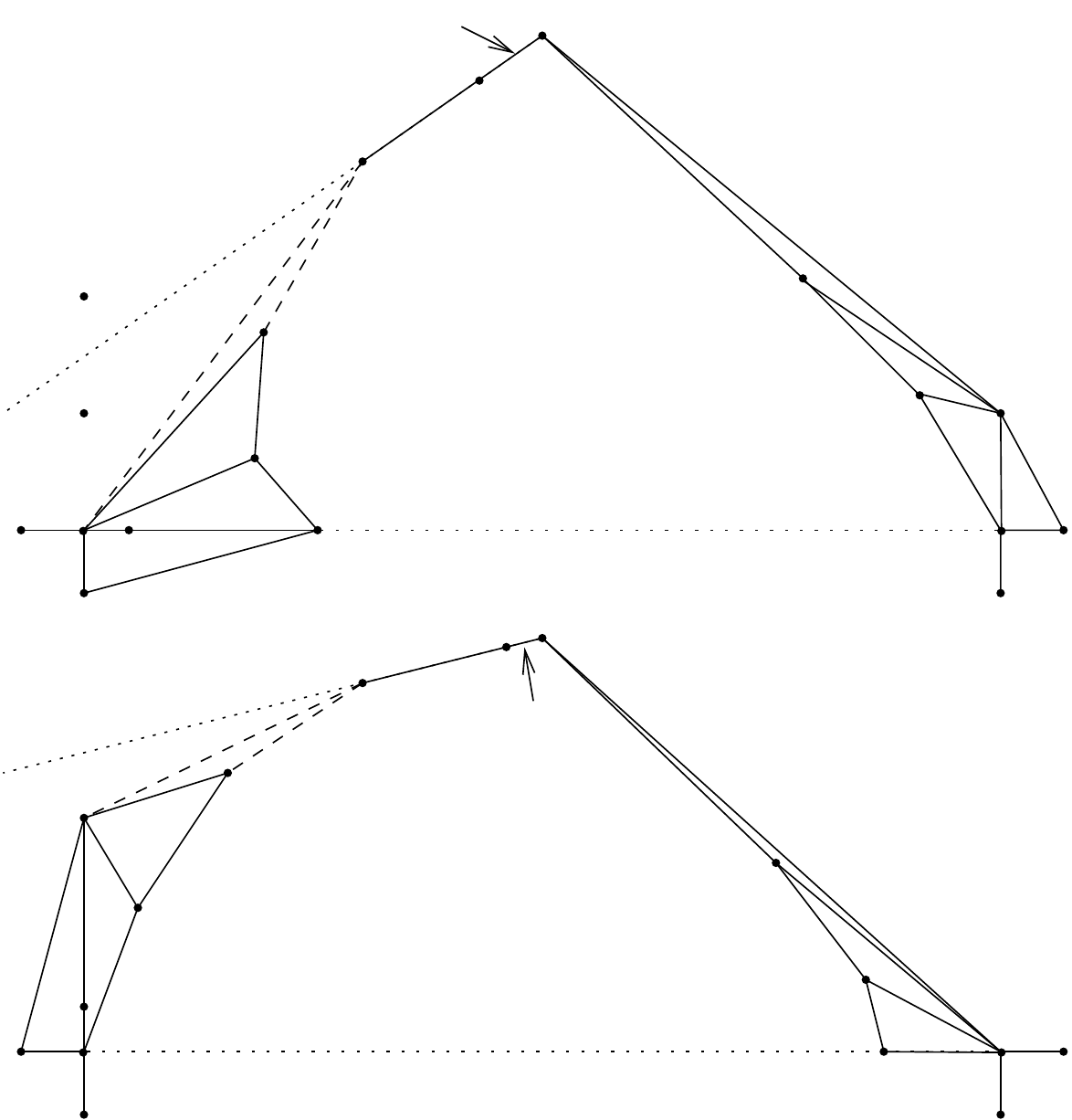_t}
\caption{Rough sketch of the admissible graph $\G$ in the two cases.}
\label{fig:biggrapheven}
\end{figure}

We next define a weighted graph $\G = (G,m)$. Let $\sigma_1$ and $\sigma_2$ (resp. $\eta_1$ and $\eta_2$) be the primitive integer segments in $G$ going out of $u$ (resp. out of $w$). Since $\sigma_1$ and $\sigma_2$ generate the lattice, choose $m_1$ and $m_2$ such that $\sigma+ m_1\sigma_1+m_2\sigma_2 = 0$ (where the orientations go out of $u$). In the same manner, choose $m'_1$ and $m'_2$ such that $\sigma+ m'_1\eta_1+m'_2\eta_2 = 0$ (where the orientations go out of $w$). In the first case, take the weight function $m$ to be that of $\G^{m_1,m_2}_{\kappa',\kappa,u}(d)$ on $G_{\kappa',\kappa,u}(d)$ and that of $\G^{m'_1,m'_2}_{\xi,\xi',w}$ on $G_{\xi,\xi',w}$ and equal to $1$ on $[u,w]$. In the second case, we take the weight function of $\G^{m_1,m_2}_{\kappa,\kappa',u}(d)$ on $G_{\kappa,\kappa',u}(d)$ and that of $\G^{m'_1,m'_2}_{\xi',\xi,w}$ on $G_{\xi',\xi,w}$ and equal to $1$ on $[u,w]$. The weighted graph $\G$ is admissible so from Theorem \ref{thm:rea}, we know that $\tau_{\G}\in\im(\mu)$. The conclusion is the same as in Proposition \ref{prop:interior}.

The cases where at least one of the ends of $[u,w]$ is not in $\itr(\Delta)$ can be treated in the same way as in Proposition \ref{prop:interior}.
\end{proof}

As we noticed in Corollary \ref{cor:obstruction}, when $\da$ is divisible by an integer $d>1$, we cannot have $\tau_{\sigma}\in\im(\mu)$ for all primitive integer segments in $\Delta$. However, we have the following two statements.

\begin{Lemma}\label{lem:diad}
 Let $d> 1$ be an integer and assume that $\da$ is divisible by $d$.  Assume that $\tau_{\sigma'}\in\im(\mu)$ (resp. $[\tau_{\sigma'}]\in\im([\mu])$) for any bridge $\sigma'$ ending at a point in $\da(d)$. Let $w\in \da$ be an interior lattice point, $\kappa$ any vertex of $\da$ and $\kappa'$ and $\kappa''$ the lattice points of $\partial \da$ next to $\kappa$ on each side. Then for any integers $m_1$ and $m_2$,
\[
(\tau_{\G_{\kappa,\kappa',w}^{m_1,m_2}})^d,(\tau_{\G_{\kappa',\kappa,w}^{m_1,m_2}})^d,(\tau_{\G_{\kappa,\kappa'',w}^{m_1,m_2}})^d,(\tau_{\G_{\kappa'',\kappa,w}^{m_1,m_2}})^d \in \im(\mu).
\]
\end{Lemma}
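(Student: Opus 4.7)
The plan is to reduce the study of $(\tau_{\G_{\kappa,\kappa',w}^{m_1,m_2}})^d$ to that of the Dehn twist associated with the weighted graph whose weights are $d$ times those of $\G_{\kappa,\kappa',w}^{m_1,m_2}$. Indeed, from the inductive definition of the weights $m_{1,k}, m_{2,k}$ (which depend linearly on $(m_1,m_2)$) and from the balancing equations determining the weights of the $\rho_i$'s, one checks that $\G_{\kappa,\kappa',w}^{dm_1,dm_2}$ has the same underlying graph as $\G_{\kappa,\kappa',w}^{m_1,m_2}$ with all weights multiplied by $d$. Since the edges of $G_{\kappa,\kappa',w}$ are supported on a unimodular convex subdivision (as in the proof of Lemma \ref{lem:diamonds}), the associated loops $\delta_\sigma$ on $C_0$ are pairwise disjoint and the corresponding Dehn twists pairwise commute, so that
\[
\tau_{\G_{\kappa,\kappa',w}^{dm_1,dm_2}} \;=\; \prod_\sigma \tau_\sigma^{d\,m(\sigma)} \;=\; \bigl(\tau_{\G_{\kappa,\kappa',w}^{m_1,m_2}}\bigr)^d.
\]
It therefore suffices to prove that $\tau_{\G_{\kappa,\kappa',w}^{dm_1,dm_2}} \in \im(\mu)$.

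By construction, $\G_{\kappa,\kappa',w}^{dm_1,dm_2}$ is balanced at every interior lattice point of $\Delta$ except at $w$, where the imbalance equals $dm_1\sigma_1 + dm_2\sigma_2$ with $\sigma_1,\sigma_2$ the two primitive segments out of $w$. The strategy is to extend this graph to a genuinely admissible weighted graph $\H$ on $\Delta$ by attaching a complementary structure that absorbs this imbalance while staying ``on the sublattice generated by $\da(d)$''. Concretely, we choose a vertex $\xi$ of $\da$ together with a boundary lattice point $\xi'$ of $\da$ playing the role of $\kappa'$ in Lemma \ref{lem:diamonds}, a point $u \in \da(d)$, and a connecting path of primitive integer segments from $w$ to $u$ inside $\da$ each of whose supporting lines meets $\da(d)$; we then attach to $u$ the scaled graph $\G_{\xi,\xi',u}^{N_1,N_2}(d)$. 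The weights along the connecting path, together with $N_1$ and $N_2$, are chosen so that the total graph $\H$ is balanced at every interior vertex. That $\H$ is admissible, in the sense that it is supported by a unimodular convex subdivision of $\Delta$, is verified via Lemmas \ref{lem:extend} and \ref{lem:refinement}, following the pattern of the proof of Lemma \ref{lem:diamonds}.

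Theorem \ref{thm:rea} then yields $\tau_\H \in \im(\mu)$. On the one hand, Lemma \ref{lem:diamondsd} applied to the scaled tail $\G_{\xi,\xi',u}^{N_1,N_2}(d)$ (using the assumption that every bridge ending on $\da(d)$ has its Dehn twist in $\im(\mu)$) shows that the Dehn twists along the two primitive segments out of $u$ in this scaled graph lie in $\im(\mu)$. On the other hand, Proposition \ref{prop:interiord} guarantees that the Dehn twist along any primitive integer segment of $\Delta$ whose supporting line meets $\da(d)$ is in $\im(\mu)$, which covers every segment of the connecting path. Consequently, all edges of $\H$ lying outside $\G_{\kappa,\kappa',w}^{dm_1,dm_2}$ have their Dehn twist in $\im(\mu)$, and iterated use of Lemma \ref{lem:chasing} peels them off to yield $\tau_{\G_{\kappa,\kappa',w}^{dm_1,dm_2}} \in \im(\mu)$. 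This settles the case of $\G_{\kappa,\kappa',w}^{m_1,m_2}$; the three remaining graphs are treated by symmetric arguments, exchanging $\kappa$ and $\kappa'$, or using $\kappa''$ instead of $\kappa'$.

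The main obstacle is combinatorial: producing, uniformly in the data $(w, m_1, m_2)$ and in each of the four variants, a connecting path and a scaled tail such that $\H$ is simultaneously balanced at every interior lattice point and supported by a unimodular convex subdivision of $\Delta$. This must be handled in a case distinction depending on the position of $w$ relative to $\da(d)$, but the analogous combinatorics already developed in the proofs of Lemma \ref{lem:diamonds} and Proposition \ref{prop:interior} should carry over with only minor modifications.
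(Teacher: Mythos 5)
Your first reduction is correct and matches the paper: the weight function of $\G_{\kappa,\kappa',w}^{m_1,m_2}$ depends linearly on $(m_1,m_2)$, all edges lie in a single unimodular convex subdivision so the corresponding Dehn twists pairwise commute, and hence $(\tau_{\G_{\kappa,\kappa',w}^{m_1,m_2}})^d = \tau_{\G_{\kappa,\kappa',w}^{dm_1,dm_2}}$. This is also precisely how the paper closes its argument, via $(\tau_{\G_{\kappa,\kappa',w}^{m_1,m_2}})^d = (\tau_{\G_{\kappa,\kappa',w}^{d,0}})^{m_1}(\tau_{\G_{\kappa,\kappa',w}^{0,d}})^{m_2}$.

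However, the central step of your proposal has a genuine gap. You propose to absorb the imbalance $d(m_1\sigma_1 + m_2\sigma_2)$ at $w$ by attaching a ``connecting path'' of primitive segments from $w$ to a point $u\in\da(d)$ and a scaled tail at $u$. But balancing a valency-$2$ path forces its edges to be collinear, so the path is a straight segment; and balancing at $w$ forces the unique outgoing edge to be parallel to $m_1\sigma_1+m_2\sigma_2$. This direction is dictated by the arbitrary data $(m_1,m_2)$ and is entirely out of our control: there is no reason its supporting line should meet $\da(d)$, so Proposition \ref{prop:interiord} need not apply to it, and there is equally no reason the forced straight path from $w$ should fit into a unimodular convex subdivision together with the two half-diamond graphs. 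You acknowledge this as ``the main obstacle'' and defer it to a case distinction, but this is precisely the part that needs a new idea; it does not follow from minor modifications of Lemma \ref{lem:diamonds} or Proposition \ref{prop:interior}.

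The paper's proof sidesteps this parallelism constraint entirely by a lattice-theoretic argument. It first proves a transfer claim (if $\tau_{\G_{\kappa,\kappa',w}^{m_1,m_2}}\in\im(\mu)$ then so are the corresponding twists for any vertex $\xi$ of $\da$), then picks a unimodular triangle of $h_{\frac{1}{d},\kappa}(\da)$ containing $h_{\frac{1}{d},\kappa}(w)$, giving three primitive directions $\gamma_1,\gamma_2,\gamma_3$ out of $w$ whose supporting lines meet $\da(d)$ (so Proposition \ref{prop:interiord} applies to them), and establishes that the corresponding $\gamma_i$-weighted graph-twists lie in $\im(\mu)$. Since $d\Z^2$ is contained in the lattice spanned by $\gamma_1,\gamma_2,\gamma_3$, the paper writes $d\sigma_1$ and $d\sigma_2$ as integer combinations of the $\gamma_i$, yielding $\tau_{\G_{\kappa,\kappa',w}^{d,0}}, \tau_{\G_{\kappa,\kappa',w}^{0,d}}\in\im(\mu)$, and hence the conclusion. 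You should look for this kind of multi-directional decomposition rather than a single forced connecting direction.
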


We prove this Lemma after the following proposition.

\begin{Proposition}\label{prop:interiordd}
Let $d> 1$ be an integer and assume that $\da$ is divisible by $d$.  Assume that $\tau_{\sigma'}\in\im(\mu)$ (resp. $[\tau_{\sigma'}]\in\im([\mu])$) for any bridge $\sigma'$ ending at a point in $\da(d)$. Then for any primitive integer segment $\sigma\subset \Delta$, we have $\tau_{\sigma}^d\in\im(\mu)$ (resp. $[\tau_{\sigma}]^d\in\im([\mu])$).
\end{Proposition}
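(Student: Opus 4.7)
The plan is to run the proof of Proposition \ref{prop:interior} essentially verbatim, but to substitute Lemma \ref{lem:diad} for Lemma \ref{lem:diamonds}. The price is that only $d$-th powers of monodromies can be recovered, which is precisely what the statement asks for. The new key ingredient is the observation that Dehn twists along edges of a unimodular convex subdivision pairwise commute (Remark \ref{rem:popdec}), so that the $d$-th power of a product of such commuting twists splits cleanly as a product of individual $d$-th powers.

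I would first treat the generic case $v, w \in \itr(\da) \cap \Z^2$, where $\sigma = [v, w]$. Following Proposition \ref{prop:interior}, pick two consecutive vertices $\kappa, \xi$ of $\da$ lying on the same side of the line through $\sigma$, and adjacent boundary lattice points $\kappa', \xi'$ of $\da$ chosen so that the pentagon $\kappa, v, w, \xi', \xi$ is convex (with $\kappa' \in [\kappa, \xi]$ and $\xi' \notin [\kappa, \xi]$). Then form the weighted graph
\[
\G = \G_{\kappa', \kappa, v}^{m_1, m_2} + \G_{\xi, \xi', w}^{m'_1, m'_2} + (\sigma, 1),
\]
with $(m_1, m_2)$ and $(m'_1, m'_2)$ chosen so that $\G$ is balanced at $v$ and at $w$. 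The same verification as in the proof of Proposition \ref{prop:interior} shows $\G$ is admissible, so $\tau_{\G} \in \im(\mu)$ by Theorem \ref{thm:rea}, and consequently $\tau_{\G}^d \in \im(\mu)$.

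The next step exploits commutativity. Since $G$ lies inside a unimodular convex subdivision of $\Delta$, the loops $\{\delta_e\}_{e \in E(G)}$ form a subfamily of a pants decomposition of $C_0$ (Remark \ref{rem:popdec}), and hence the associated Dehn twists pairwise commute. This yields the factorization
\[
\tau_{\G}^d \;=\; \tau_{\G_{\kappa', \kappa, v}^{m_1, m_2}}^d \cdot \tau_{\G_{\xi, \xi', w}^{m'_1, m'_2}}^d \cdot \tau_{\sigma}^d.
\]
Lemma \ref{lem:diad}, applied at the vertex $\kappa$ with interior point $v$ and at $\xi$ with interior point $w$, places the first two factors in $\im(\mu)$. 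Therefore $\tau_{\sigma}^d \in \im(\mu)$, as desired.

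Boundary endpoints will be handled by the same minor surgery as in Proposition \ref{prop:interior}: when one of $v, w$ sits on $\partial \da \cap \itr(\Delta)$, the corresponding $\G_{\ldots}$-summand is replaced by a ray from that endpoint through an extra vertex of $\da$ together with a bridge ending there; when the endpoint lies on $\partial \Delta$, the balancing condition at it disappears and the summand is dropped entirely. Segments lying on an edge of $\da$ are already covered by Proposition \ref{prop:side}. The homological statement goes through identically, since every tool invoked (Theorem \ref{thm:rea}, Lemma \ref{lem:diad}, Proposition \ref{prop:side}, Remark \ref{rem:popdec}) has an $\im([\mu])$ counterpart. The main obstacle I anticipate is purely combinatorial bookkeeping: checking that in each boundary subcase the modified subgraph still admits a balancing for which Lemma \ref{lem:diad} remains applicable.
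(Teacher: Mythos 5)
Your proposal is correct and follows essentially the same route as the paper: take the admissible graph $\G = \G_{\kappa',\kappa,v}^{m_1,m_2} + \G_{\xi,\xi',w}^{m_1',m_2'} + (\sigma,1)$ from Proposition~\ref{prop:interior}, note $\tau_\G\in\im(\mu)$, factor $\tau_\G^d = \tau_{\G_1}^d\tau_{\G_2}^d\tau_\sigma^d$, and invoke Lemma~\ref{lem:diad}. The one thing you do more carefully than the paper is to make the commutativity argument explicit by citing Remark~\ref{rem:popdec} (the loops sit in a common pair-of-pants decomposition, so the twists commute), which the paper uses silently when it says ``Lemma~\ref{lem:diad} gives us the conclusion''; this is a genuine improvement in readability, not a different argument.
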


\begin{proof}
  The idea of the proof is similar to that of Proposition \ref{prop:interior}. Namely, if $v$ and $w$ denote the two ends of $\sigma$, we first assume that they are both in the interior of $\da$ and we consider exactly the same weighted graph $\G = (G,m)$ as in Proposition \ref{prop:interior}. Recall that it contains two subgraphs $\G_1 =\G_{\kappa',\kappa,v}^{m_1,m_2}$ and $\G_2 =\G_{\xi,\xi',w}^{m_1',m_2'}$. From Theorem \ref{thm:rea}, we know that $\tau_{\G_1}\tau_{\G_2}\tau_{\sigma}\in\im(\mu)$. Lemma \ref{lem:diad} gives us the conclusion we want.

The cases where one of the ends is not in the interior of $\da$ can be proved in the same way. We leave it to the reader.
\end{proof}

\begin{proof}[Proof of Lemma \ref{lem:diad}]
Recall that $\sigma_1$ and $\sigma_2$ are the two primitive integer segments in $\G_{\kappa,\kappa',w}^{m_1,m_2}$ going out of $w$. Let us first show that if $\tau_{\G_{\kappa,\kappa',w}^{m_1,m_2}}\in\im(\mu)$ then for any vertex $\xi$ of $\da$,
\[
\tau_{\G_{\xi,\xi',w}^{a_1,a_2}}, \tau_{\G_{\xi,\xi'',w}^{b_1,b_2}}, \tau_{\G_{\xi',\xi,w}^{c_1,c_2}}, \tau_{\G_{\xi'',\xi,w}^{d_1,d_2}} \in\im(\mu),
\]
where the weights are chosen so that adding $m_1\sigma_1+m_2\sigma_2$ balances each graph at $w$.

To that end, we can assume that $\xi$ is a vertex next to $\kappa$, so that $\kappa''$ and $\xi''$ are on $[\kappa,\xi]$.
\begin{itemize}
\item The weighted graph obtained by concatenating $\G_{\kappa,\kappa',w}^{m_1,m_2}$ and $\G_{\xi,\xi',w}^{a_1,a_2}$ is admissible, so by Theorem \ref{thm:rea}, $\tau_{\G_{\kappa,\kappa',w}^{m_1,m_2}}\tau_{\G_{\xi,\xi',w}^{a_1,a_2}}\in\im(\mu)$.
\item The weighted graph obtained by concatenating $\G_{\kappa,\kappa',w}^{m_1,m_2}$ and $\G_{\xi'',\xi,w}^{d_1,d_2}$ is admissible, so by Theorem \ref{thm:rea}, $\tau_{\G_{\kappa,\kappa',w}^{m_1,m_2}}\tau_{\G_{\xi'',\xi,w}^{d_1,d_2}}\in\im(\mu)$.
\item The weighted graph obtained by concatenating $\G_{\xi,\xi',w}^{a_1,a_2}$ and $\G_{\xi,\xi'',w}^{b_1,b_2}$ is admissible, so by Theorem \ref{thm:rea}, $\tau_{\G_{\xi,\xi',w}^{a_1,a_2}} \tau_{\G_{\xi,\xi'',w}^{b_1,b_2}}\in\im(\mu)$.
\end{itemize}
To prove that $\tau_{\G_{\xi',\xi,w}^{c_1,c_2}}\in\im(\mu)$, we consider the vertex $\zeta$ next to $\xi$ such that $\xi'$ and $\zeta'$ are on $[\xi,\zeta]$. We choose weights are chosen so that adding $m_1\sigma_1+m_2\sigma_2$ balances the graph $\G_{\zeta',\zeta,w}^{e_1,e_2}$ at  $w$.
\begin{itemize}
\item The weighted graph obtained by concatenating $\G_{\xi,\xi'',w}^{b_1,b_2}$ and $\G_{\zeta',\zeta,w}^{e_1,e_2}$ is admissible, so by Theorem \ref{thm:rea}, $\tau_{\G_{\xi,\xi'',w}^{b_1,b_2}} \tau_{\G_{\zeta',\zeta,w}^{e_1,e_2}}\in\im(\mu)$.
\item The weighted graph obtained by concatenating $\G_{\xi',\xi,w}^{c_1,c_2}$ and $\G_{\zeta',\zeta,w}^{e_1,e_2}$ is admissible, so by Theorem \ref{thm:rea}, $\tau_{\G_{\xi',\xi,w}^{c_1,c_2}} \tau_{\G_{\zeta',\zeta,w}^{e_1,e_2}}\in\im(\mu)$.
\end{itemize}
Thus, we obtain the result we wanted. Notice that the previous claim is still true if we assume that $\tau_{\G_{\kappa',\kappa,w}^{m_1,m_2}}$ in in $\im(\mu)$ instead of $\tau_{\G_{\kappa,\kappa',w}^{m_1,m_2}}$.

  Now, choose a unimodular convex subdivision of $h_{\frac{|}{d},\kappa}(\da)$. The point $h_{\frac{|}{d},\kappa}(w)$ is in at least one of the triangles of this subdivision, possibly on its boundary. Let $x$, $y$ and $z$ be the three vertices of the image of such a triangle by $h_{\frac{|}{d},\kappa}^{-1}$. Denote respectively by $\gamma_1$, $\gamma_2$ and $\gamma_3$ the primitive integer segments on $[x,w]$, $[y,w]$ and $[z,w]$ ending at $w$.

Let us now show that $(\tau_{\G_{\kappa,\kappa',w}^{m_1,m_2}})^d \in \im(\mu)$. We proceed as in the proof of Proposition \ref{prop:interiord}, using $\gamma_1$ in place of $\sigma$. Using the same notations, we obtain that $\tau_{\G_{\xi,\xi',w}^{m_1',m_2'}}\in\im(\mu)$ in the first case, and $\tau_{\G_{\xi',\xi,w}^{m_1',m_2'}}\in\im(\mu)$ in the second case for some vertex $\xi$ of $\da$. Using the claim we proved in the beginning, we have that
\[
\tau_{\G_{\kappa,\kappa',w}^{a_1,a_2}},\tau_{\G_{\kappa',\kappa,w}^{b_1,b_2}},\tau_{\G_{\kappa,\kappa'',w}^{c_1,c_2}},\tau_{\G_{\kappa'',\kappa,w}^{d_1,d_2}}\in \im(\mu),
\]
where the weights are chosen so that adding $\gamma_1$ with weight $1$ balances the graph at $w$.

We do the same for $\gamma_2$ to prove that
\[
\tau_{\G_{\kappa,\kappa',w}^{a'_1,a'_2}},\tau_{\G_{\kappa',\kappa,w}^{b'_1,b'_2}},\tau_{\G_{\kappa,\kappa'',w}^{c'_1,c'_2}},\tau_{\G_{\kappa'',\kappa,w}^{d'_1,d'_2}}\in \im(\mu),
\]
where the weights are chosen so that adding $\gamma_2$ with weight $1$ balances the graph at $w$, and for $\gamma_3$ to obtain that
\[
\tau_{\G_{\kappa,\kappa',w}^{a''_1,a''_2}},\tau_{\G_{\kappa',\kappa,w}^{b''_1,b''_2}},\tau_{\G_{\kappa,\kappa'',w}^{c''_1,c''_2}},\tau_{\G_{\kappa'',\kappa,w}^{d''_1,d''_2}}\in \im(\mu),
\]
where the weights are chosen so that adding $\gamma_3$ with weight $1$ balances the graph at $w$.

Since $d\Z$ is included in the lattice generated by $\gamma_1$, $\gamma_2$ and $\gamma_3$, we can find integer combinations $\alpha_1\gamma_1 + \alpha_2\gamma_2+\alpha_3\gamma_3 + d\sigma_1 = 0$ and $\beta_1\gamma_1 + \beta_2\gamma_2+\beta_3\gamma_3 + d\sigma_2 = 0$. Thus,
\[
\tau_{\G_{\kappa,\kappa',w}^{d,0}} = (\tau_{\G_{\kappa,\kappa',w}^{a_1,a_2}})^{\alpha_1}(\tau_{\G_{\kappa,\kappa',w}^{a'_1,a'_2}})^{\alpha_2}(\tau_{\G_{\kappa,\kappa',w}^{a''_1,a''_2}})^{\alpha_3} \in\im(\mu)
\]
 and
\[
\tau_{\G_{\kappa,\kappa',w}^{0,d}} = (\tau_{\G_{\kappa,\kappa',w}^{a_1,a_2}})^{\beta_1}(\tau_{\G_{\kappa,\kappa',w}^{a'_1,a'_2}})^{\beta_2}(\tau_{\G_{\kappa,\kappa',w}^{a''_1,a''_2}})^{\beta_3} \in\im(\mu)
\]
It implies that
\[ (\tau_{\G_{\kappa,\kappa',w}^{m_1,m_2}})^d = (\tau_{\G_{\kappa,\kappa',w}^{d,0}})^{m_1} (\tau_{\G_{\kappa,\kappa',w}^{0,d}})^{m_2} \in \im(\mu).\]

We conclude the proof of Lemma \ref{lem:diad} using the same argument for the three other graphs.
\end{proof}

In order to be able to apply Propositions \ref{prop:interiord} and \ref{prop:interiordd}, we need to be able to say something about the bridges ending at a point of $\da(d)$.

\begin{Proposition}
Let $s = \gcd \big( \left\lbrace l_\epsilon \, \big| \, \forall \, \epsilon \text{ edge of } \da \right\rbrace \big)$.  If $v\in\da(s)\cap\partial\da$, then for any bridge $\sigma$ ending at $v$, we have $\tau_{\sigma}\in\im(\mu)$.

Moreover, if $s$ is even and $v\in\da(2)$, then for any bridge $\sigma$ ending at $v$, we have $[\tau_\sigma]\in\im(\mmu)$.
\end{Proposition}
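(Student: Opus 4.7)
The plan is to exhibit, for any bridge $\sigma$ ending at $v$, an admissible weighted graph $\G \subset \Delta$ in which $\sigma$ appears with weight $\pm 1$ and whose other primitive integer segments all have their Dehn twists in $\im(\mu)$ by one of the results established earlier. Applying Theorem \ref{thm:rea} to get $\tau_\G \in \im(\mu)$ and then iterating Lemma \ref{lem:chasing} will isolate $\tau_\sigma$. By Lemma \ref{lem:allbridge} I am free to choose the bridge. If $v$ is a vertex of $\da$ the conclusion is Proposition \ref{prop:corner}, so I may assume $v$ lies in the relative interior of some edge $\epsilon$ of $\da$; after normalizing at an endpoint $\kappa$ of $\epsilon$, I take $\kappa = (0,0)$, $\epsilon = [(0,0),(l,0)]$ with $s \mid l$, and $v = (ks,0)$ for some $1 \leq k \leq l/s - 1$. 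Proposition \ref{prop:gcdedges} already gives $(\tau_\sigma)^s \in \im(\mu)$, and the hypothesis $v \in \da(s)$ is what must promote this power to~$1$.

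The graph $\G$ will have two parts. The first, lying in $\{y \leq 0\}$, consists of the primitive integer segments on the horizontal line $y=0$ from $(-1,0)$ to $(\omega_\kappa',0)$ with a common weight, the boundary bridges $[\alpha_\kappa,\kappa]$, $[\akp,\kappa]$ and their analogues at $\xi=(l,0)$, and the chosen bridge $\sigma$ at $v$ with weight $1$. The second part lies inside $\da$ and has to absorb the vertical imbalance $(0,-1)$ produced at $v$ by $\sigma$; I will build it out of primitive segments between the lattice points of $\da(s)$, joining $v$ successively to $\kappa$ (or $\xi$) through interior $\da(s)$-points. Because $v \in \da(s)$, the sublattice of $\da(s)$-points inside $\da$ provides exactly the compatible vectors needed to carry the imbalance integrally from one $\da(s)$-point to the next without introducing fractional weights; at each interior $\da(s)$-point the local balancing can be completed by segments lying along $\partial\da$ (handled by Proposition \ref{prop:side}) or by vertex bridges (handled by Proposition \ref{prop:corner}), while the resulting $A$-cycles $\delta_w$ will be taken care of by Theorem \ref{thm:acycle}. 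Admissibility will follow by writing down a piecewise-linear convex function on $\Delta$ whose break locus contains all edges of $\G$, and then refining via Lemma \ref{lem:refinement} and extending via Lemma \ref{lem:extend}.

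Once Theorem \ref{thm:rea} yields $\tau_\G \in \im(\mu)$, I will dismantle $\G$ step by step using Lemma \ref{lem:chasing}: at each step I remove an edge ending at an interior vertex of the graph whose associated Dehn twist lies in $\im(\mu)$. The edges in the first part are handled by Propositions \ref{prop:corner} and \ref{prop:side}; the edges in the second part are removed in the order in which they were added, using Theorem \ref{thm:acycle} at each interior $\da(s)$-point. The only edge left at the end is $\sigma$ with weight $\pm 1$, yielding $\tau_\sigma \in \im(\mu)$. The second statement, $[\tau_\sigma] \in \im(\mmu)$ when $s$ is even and $v \in \da(2)$, is proved by the same construction with $s$ replaced by $2$; every ingredient (admissibility, Lemma \ref{lem:chasing}, the propagation through $\da(2)$-points) descends naturally to the algebraic monodromy.

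The main obstacle will be to prove that the second part of $\G$ really can be constructed with integer weights in all cases, to specify an ordering in which Lemma \ref{lem:chasing} peels off its edges, and to verify convexity of the supporting subdivision. The essential combinatorial input is that $v \in \da(s)$ provides a sublattice of $\da$ through which the imbalance created by $\sigma$ can be propagated one step at a time with integral weights; the tempting shortcut of using the graphs $\G_{\kappa,\kappa',w}^{m_1,m_2}(s)$ of Section \ref{sec:conj} must be avoided, since the hypotheses of Lemmas \ref{lem:diamondsd} and \ref{lem:diad} are precisely the conclusion of the present proposition. Accordingly, the construction has to be direct and must rely only on the tools available prior to Section \ref{sec:conj}.
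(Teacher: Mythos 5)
The paper's proof is considerably simpler and more direct than the plan you outline, and your plan has two genuine gaps.

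After reducing to $v=(l,0)$ in the relative interior of an edge $\epsilon$ of $\da$ (so $l=as$ for some integer $a\geq 1$, using $v\in\da(s)$) and taking $\sigma$ to be the bridge from $v$ to $\ak=(0,-1)$, the paper uses a single admissible weighted graph (Figure~\ref{fig:evenbridge}) supported along the bottom edge of $\da$: horizontal primitive segments from $\akp$ to $(l,0)$, the bridges at $\kappa$, $\sigma$ with weight $1$, and crucially the bridge $\sigma'$ from $\akp=(-1,0)$ to $(0,1)$ carrying weight $l$. The hypothesis $v\in\da(s)$ is then used only through Proposition~\ref{prop:gcdedges}: since $l=as$, we have $\tau_{\sigma'}^{\,l}=(\tau_{\sigma'}^{\,s})^a\in\im(\mu)$, so $\sigma'$ can be cancelled from $\tau_{\G}$. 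The remaining edges are handled by Propositions~\ref{prop:corner} and~\ref{prop:side} and Lemma~\ref{lem:chasing}, isolating $\tau_\sigma$. There is no propagation into the interior of $\da$, and no ``second part'' of the graph. Your proposed construction of a second part built out of ``primitive segments between the lattice points of $\da(s)$'' is problematic on its face: two distinct points of $\da(s)$ are never at lattice distance $1$ when $s>1$, so such primitive segments don't exist; the intermediate lattice points are not in $\da(s)$ and balancing there would require additional edges you have not supplied. You acknowledge that this step is unverified, and indeed it is where the proposal stalls. The correct use of the $\da(s)$ hypothesis is the much cleaner observation that the \emph{weight} $l=as$ on a single bridge is a multiple of $s$, which is exactly what Proposition~\ref{prop:gcdedges} can absorb.

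The second gap is in the homological statement. Your ``same construction with $s$ replaced by $2$'' would need $[\tau_{\sigma'}]^2\in\im(\mmu)$. But Proposition~\ref{prop:gcdedges} only gives $\tau_{\sigma'}^s\in\im(\mu)$; if $s>2$ this does not give any power of $\tau_{\sigma'}$ of order $2$. The paper instead invokes the homological chain relation (\cite[Prop.\ 4.12]{farbmarg}): with $\sigma_1=[\ak,\kappa]$ and $\sigma_2=[\kappa,(0,1)]$, the boundary of a regular neighborhood of $\delta_{\sigma_1}\cup\delta_\kappa\cup\delta_{\sigma_2}$ has both components homologous to $\delta_{\sigma'}$, yielding
\[
\big([\tau_{\sigma_1}][\tau_{\kappa}][\tau_{\sigma_2}]\big)^4 = [\tau_{\sigma'}]^2.
\]
Since $[\tau_{\sigma_1}],[\tau_\kappa],[\tau_{\sigma_2}]\in\im(\mmu)$ by Propositions~\ref{prop:corner},~\ref{prop:side} and Theorem~\ref{thm:acycle}, this gives $[\tau_{\sigma'}]^2\in\im(\mmu)$ and hence $[\tau_{\sigma'}]^l=[\tau_{\sigma'}]^{2a}\in\im(\mmu)$. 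This identity is the key extra ingredient the homological case requires and it is absent from your proposal; without it, ``replacing $s$ by $2$'' has no leverage.
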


\begin{proof}
We may assume that $v$ is not a vertex as this case is taken care of by Proposition \ref{prop:corner}.

Let $v\in\partial\da$ and let $\kappa$ be a vertex of $\da$ next to $v$. Using a normalization at $\kappa$ we may assume that $v$ is of the form $(l,0)$ for some integer $l\geq 1$. Using Lemma \ref{lem:allbridge} we may assume that $\sigma$ joins $v$ to $\ak = (0,-1)$. Now consider the weighted graph $\G$ given in Figure \ref{fig:evenbridge}, where the horizontal edges have weight $-2l$. This graph is admissible so by Theorem \ref{thm:rea}, we know that $\tau_{\G}\in\im(\mu)$.

\begin{figure}[h]
\centering
\input{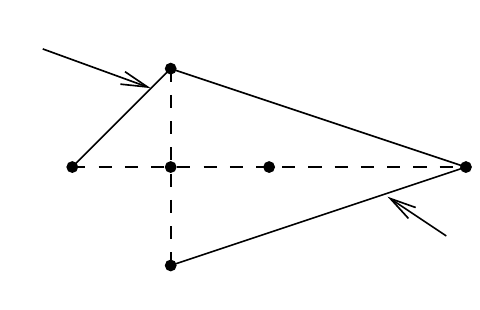_t}
\caption{The weighted graph $\G$}
\label{fig:evenbridge}
\end{figure}

 If $v\in\da(s)$, we have $l = as$ for some integer $a\geq 1$. We know from Proposition \ref{prop:gcdedges} that $\tau_{\sigma'}^{as}\in\im(\mu)$. Thus, applying Propositions \ref{prop:corner} and \ref{prop:side} as well as Lemma \ref{lem:chasing}, we obtain that $\tau_{\sigma}\in\im(\mu)$.

    If $s$ is even and $v\in\da(2)$, then $l = 2a$ for some integer $a\geq 1$. Let $\sigma'$ be the bridge joining $\akp=(-1,0)$ to $(0,1)$. We do not have in general that $\tau_{\sigma'}^2\in\im(\mu)$. However, if we define $\sigma_1=[\ak,\kappa]$ and $\sigma_2=[\kappa,(0,1)]$, the boundary of a regular neighborhood of $\delta_{\sigma_1}\cup\delta_{\kappa}\cup\delta_{\sigma_2}$ has two connected components both homologous to $\delta_{\sigma'}$ if one orients them carefully. Using the chain rule (see Proposition 4.12 in \cite{farbmarg}) we obtain that
\[
([\tau_{\sigma_1}][\tau_{\kappa}][\tau_{\sigma_2}])^4 = [\tau_{\sigma'}]^2.
\]
It follows then from Propositions \ref{prop:corner} and \ref{prop:side} as well as Theorem \ref{thm:acycle} that $[\tau_{\sigma'}]^2\in\im(\mmu)$.

Using the previous construction, we conclude that $[\tau_{\sigma}]\in\im(\mmu)$.
\end{proof}

\bibliographystyle{alpha}
\bibliography{Draft}

\vspace{1cm}
\noindent
\textit{E-mail addresses}: remicretois@yahoo.fr,  lionel.lang@math.uu.se

\end{document}